\documentclass[a4paper,10pt,reqno]{amsart}
\usepackage[utf8]{inputenc}
\usepackage{amssymb}
\usepackage{amsthm}
\usepackage{tikz}
\usepackage{mathrsfs}
\usepackage{epsf}
\usepackage{pdfpages}
\usepackage{bookmark,hyperref}
\hypersetup{
     colorlinks=true,
     linkcolor=blue,
     filecolor=blue,
     citecolor = blue,
     urlcolor=cyan,
     }

\usepackage{pstricks}

\newtheorem{theorem}{Theorem}[section]
\newtheorem{lemma}[theorem]{Lemma}
\newtheorem{corollary}[theorem]{Corollary}
\newtheorem{proposition}[theorem]{Proposition}
\newtheorem{question}{Question}
\newtheorem*{definition}{Definition}
\newtheorem*{theorem*}{Theorem}
\newtheorem*{proposition*}{Proposition}
\newtheorem{claim}{Claim}

\newtheoremstyle{mythm}%
{3pt}
{3pt}
{}
{}
{\bfseries}
{.}
{.5em}
{}%

\theoremstyle{mythm}
\newtheorem*{notation}{Notation}
\newtheorem*{example}{Example}
\newtheorem{remark}[theorem]{Remark}

\newcommand{\interior}[1]{{\kern0pt#1}^{\mathrm{o}}}

\def\Z{\mathbb{Z}}
\def\Q{\mathbb{Q}}
\def\R{\mathbb{R}}
\def\C{\mathbb{C}}

\def\set#1{\left\{\, #1 \,\right\}}
\def\abs #1{\left| \,#1\, \right|}
\def\norm #1{\left\| \,#1\, \right\|}
\def\inner #1#2{\langle \,#1,#2\, \rangle}

\def\S{\mathbb{S}}

\def\calA{\mathcal{A}}
\def\calB{\mathcal{B}}
\def\calC{\mathcal{C}}
\def\calD{\mathcal{D}}
\def\calE{\mathcal{E}}

\def\calG{\mathcal{G}}
\def\calH{\mathcal{H}}

\def\calL{\mathcal{L}}

\def\calP{\mathcal{P}}

\def\calS{\mathcal{S}}

\def\mfu{\mathfrak{u}}

\title[Viscosity solutions and hyperbolic motions]
{Viscosity solutions and hyperbolic motions:\\
a new PDE method for the N-body problem}
\author{Ezequiel Maderna}
\address{IMERL \& CMAT, Universidad de la República, Uruguay}
\email{eze@fing.edu.uy}
\author{Andrea Venturelli}
\address{Laboratoire de Mathématiques d'Avignon , France}
\email{andrea.venturelli@univ-avignon.fr}
\thanks{This work was supported by MATH AmSud Sidiham,
CSIC grupo 618 and IFUM LIA-CNRS}
\keywords{Hamilton-Jacobi equation, viscosity solutions, N-body problem}
\subjclass[2010]{70H20 70F10 (Primary), 49L25 37J50 (Secondary)}
\date{\today}

\begin{document}

\begin{abstract}
We prove for the $N$-body problem
the existence of hyperbolic motions
for any prescribed limit shape and any given initial configuration of the bodies.
The energy level $h>0$ of the motion can also be chosen arbitrarily.
Our approach is based on the construction of
global viscosity solutions for the Hamilton-Jacobi equation $H(x,d_xu)=h$.
We prove that these solutions
are fixed points of the associated Lax-Oleinik semigroup.
The presented results can also be viewed as a new application of
Marchal's Theorem,
whose main use in recent literature
has been to prove the existence of periodic orbits.
\end{abstract}

\maketitle
\setcounter{tocdepth}{3}
\tableofcontents

\section{Introduction}

This paper is about the Newtonian model of gravitation,
also known as the classical $N$-body problem.
We start recalling the standard notation.
Let $E$ be an Euclidean space,
in which the punctual masses $m_1,\dots,m_N>0$
are moving under the action of the inverse-square law of universal gravitation.
If the components of $x=(r_1,\dots,r_N)\in E^N$
are the positions of the bodies,
then we shall denote $r_{ij}=\norm{r_i-r_j}_E$
the distance between bodies $i$ and $j$
for any pair $1\leq i<j\leq N$.
The Newton's equations can be written as $\ddot x=\nabla U(x)$,
where $U:E^N\to\R\cup\set{+\infty}$ is the \emph{Newtonian potential}, 
\[
U(x)=\;\sum_{i<j}\;m_i\,m_j\;r_{ij}^{-1}\;,
\]
and the gradient is taken with respect to the mass scalar product.
A configuration $x\in E^N$ is said to be without collisions if
$U(x)<+\infty$,
that is to say, whenever we have $r_{ij}\neq 0$ for all $i\neq j$.
We denote $\Omega\subset E^N$ the open and dense set of configurations without collisions.
Therefore Newton's equations define an analytic local flow on
$T\Omega=\Omega\times E^N$,
with a first integral given by the energy constant
\[h=\frac{1}{2}\norm{\dot x}^2-U(x)\,.\]

One of the main difficulties
for the analysis of the dynamics in this model is the uncertainty,
for a given motion,
about the presence of singularities after a finite amount of time.
That is to say,
we can not predict whether a certain evolution of the bodies
will be defined for all future time or not.
We recall that maximal solutions that end in finite time
must undergo collisions at the last moment,
or to have an extremely complex behaviour called
pseudocollision (\cite{Dia}, p.39).
Notwithstanding,
the classification of all possible final evolutions was developed,
for motions assumed to be without singularities in the future,
essentially in terms of
the asymptotic behaviour of the distance between the bodies.
Some of the greatest contributions in this direction
are undoubtedly those due to Chazy,
and especially those that he obtained in the works
\cite{Cha1, Cha2} that we comment below.
However,
this approach does not provide the existence of motions
for any type of final evolution.

In this paper we will be concerned with the class of
hyperbolic motions,
as defined by Chazy by analogy with the Keplerian case.

\begin{definition}
Hyperbolic motions are those such that
each body has a different limit velocity vector, that is
$\dot{r}_i(t)\to a_i\in E$ as $t\to +\infty$,
and $a_i\neq a_j$ whenever $i\neq j$. 
\end{definition}

If $V$ is a normed vector space
and $x(t)$ is a smooth curve in $V$
with asymptotic velocity $a\in V$,
then we must have $x(t)=ta+o(t)$ as $t\to +\infty$,
but the converse is of course not true.
However, for $V=E^N$ and $a=(a_1,\dots,a_N)\in\Omega$,
the converse is satisfied
by solutions of the Newtonian $N$-body problem
(see Lemma \ref{lema-cont.limitshape}).
Thus, hyperbolic motions are characterized as
\emph{motions without singularities in the future and such that
 $x(t)=ta+o(t)$ for some configuration $a\in\Omega$.}

It follows that for any hyperbolic motion we have
$\alpha t < r_{ij}(t) <  \beta t$ for some positive constants,
for all $i<j$, and for all $t$ big enough.
As we will see,
Chazy proved that this weaker property
also characterizes hyperbolic motions. 

As usual,
$I(x)=\inner{x}{x}=\sum_i m_i\inner{r_i}{r_i}_E$ will denote the
\emph{moment of inertia} of the configuration $x\in E^N$
with respect to the origin of $E$.
When the motion $x(t)$ is given,
we will use the notation $U(t)$ and $I(t)$
for the compositions $U(x(t))$ and $I(x(t))$ respectively.
Thus for an hyperbolic motion such that
$x(t)=a t + o(t)$ as $t\to +\infty$ we have
$U(t)\to 0$, $I(t)\sim I(a)\,t^2$ and $2h=I(a)>0$.

We say that a motion $x(t)$ has \emph{limit shape}
when there is a time dependent similitude $S(t)$ of the space $E$
such that $S(t)x(t)$ converges to some configuration $a\neq 0$
(here the action of $S(t)$ on $E^N$ is the diagonal one).
Thus the limit shape of an hyperbolic motion is the shape
of his asymptotic velocity $a=\lim_{t\to +\infty}t^{-1}x(t)$.
Note that, in fact,
this represents a stronger way of having a limit shape,
since in this case the similarities are given by homotheties.

\subsection{Existence of hyperbolic motions}
\label{s-existence}

The only explicitly known hyperbolic motions
are of the homographic type,
meaning that the configuration is
all the time in the same similarity class.
For this kind of motion,
$x(t)$ is all the time a central configuration,
that is, a critical point of $I^{1/2}U$.
This is a strong limitation,
for instance the only central configurations for $N=3$
are either equilateral or collinear.
Moreover,
the Painlevé-Wintner conjecture states that up to similarity
there are always a finite number of central configurations.
The conjecture was confirmed by Hampton and Moeckel
\cite{HamMoe} in the case of four bodies,
and by Albouy and Kaloshin \cite{AlbKal}
for generic values of the masses in the planar five-body problem.

On the other hand,
Chazy proved in \cite{Cha2} that the set of initial conditions
giving rise to hyperbolic motions is an open subset of $T\Omega$,
and moreover,
that the limit shape depends continuously on the initial condition
(see Lemma \ref{lema-cont.limitshape}).
In particular,
a motion close enough to some hyperbolic homographic motion
is still hyperbolic.
However, this does not allow us to draw conclusions
about the set of configurations that are realised as limit shapes.
In this paper we prove that \emph{any} configuration
without collisions is the limit shape of some hyperbolic motion.
At our knowledge, there are no results in this direction
in the literature of the subject.

An important novelty in this work is the use of global viscosity solutions, in the sense introduced by Crandall, Evans and Lions \cite{CraLio,CraEvaLio},
for the supercritical Hamilton-Jacobi equation
\begin{equation}\tag{HJ}\label{HJh}
H(x,d_xu)=h \qquad x\in E^N,
\end{equation}
where $H$
is the Hamiltonian of the Newtonian $N$-body problem,
and $h>0$.

We will found global viscosity solutions through a limit process
inspired by the Gromov's construction of the ideal boundary
of a complete locally compact metric space.
To do this,
we will have to generalize to the case $h>0$ the Hölder estimate
for the action potential discovered by the first author in
\cite{Mad1} in the case $h=0$.
With this new estimate we will remedy the loss of
the Lipschitz character of the viscosity subsolutions,
which is due to the singularities of the Newtonian potential.

In a second step, we will show that the functions thus obtained
are in fact fixed points of the Lax-Oleinik semigroup. 
Moreover,
we will prove that given any configuration without collisions
$a\in\Omega$,
there are solutions of Equation (\ref{HJh}) such that
all its calibrating curves are hyperbolic motions
having the shape of $a$ as limit shape.
Following this method (developed in Sect. \ref{s-HJ})
we get to our main result.

\begin{theorem}
\label{thm-princ}
For the Newtonian $N$-body problem in a space $E$
of dimension at least two,
there are hyperbolic motions $x:[0,+\infty)\to E^N$ such that
\[x(t)=\sqrt{2h}\,t\;a+o(t)\quad\text{ as }\quad t\to +\infty,\]
for any choice of $x_0=x(0)\in E^N$,
for any configuration without collisions $a\in\Omega$ normalized by $\norm{a}=1$,
and for any choice of the energy constant $h>0$.  
\end{theorem}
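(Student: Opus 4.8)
The plan is to construct the hyperbolic motion as a calibrating curve of a well-chosen global viscosity solution of \eqref{HJh}, following the scheme announced in the introduction. First I would fix the energy $h>0$, the starting configuration $x_0\in E^N$, and the normalized limit shape $a\in\Omega$ with $\norm a=1$. The idea is to produce, for each such $a$, a global viscosity solution $u=u_a$ of $H(x,d_xu)=h$ whose calibrating curves issuing from $x_0$ are minimizers of the action with the Ma\~n\'e critical value shifted to $h$, and which are forced asymptotically towards the ray $t\mapsto \sqrt{2h}\,t\,a$. To get such a $u$, I would imitate Gromov's construction of the ideal boundary: take the family of functions $x\mapsto \phi_h(x,\sigma a)-\text{const}$, where $\phi_h$ is the action potential at energy $h$ and $\sigma\to+\infty$, renormalize by subtracting the value at $x_0$, and extract a locally uniform limit. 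The compactness needed for this extraction is exactly where the new H\"older estimate for the action potential at energy $h>0$ (the generalization of \cite{Mad1}) enters: it gives equicontinuity of the renormalized family despite the fact that Newtonian subsolutions are not Lipschitz near the collision set.

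Next I would verify that the limit function $u_a$ is indeed a global viscosity solution of \eqref{HJh}, and moreover a fixed point of the associated Lax--Oleinik semigroup $T_t$; this is the content of the results developed in Section~\ref{s-HJ} that I am allowed to invoke. Being a fixed point of $T_t$ means that through every point there is a curve that is calibrated by $u_a$ on all of $[0,+\infty)$, i.e.\ a curve $x(t)$ with $x(0)=x_0$ along which $u_a(x(t))-u_a(x_0)=\int_0^t L(x,\dot x)\,ds - ht$ for the Lagrangian $L$ at energy level $h$. Such calibrating curves are automatically free of singularities in the future, because a calibrated curve is a minimizer and Marchal's Theorem prevents interior collisions on minimizers; so the curve is a genuine solution of Newton's equations defined on all of $[0,+\infty)$ with energy $h$.

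It then remains to identify the asymptotic behaviour of these calibrating curves, and this is the step I expect to be the main obstacle. One has to show that the specific solution $u_a$ obtained from the direction $a$ has all its calibrating curves asymptotic to $\sqrt{2h}\,t\,a$, i.e.\ $x(t)=\sqrt{2h}\,t\,a+o(t)$. The natural route is: (i) from calibration and the form of $u_a$, derive a lower bound $I(t)\gtrsim t^2$ forcing the motion to escape every bound, hence $U(t)\to 0$; (ii) conclude via Chazy's characterization (quoted before the statement) that the motion is hyperbolic, so $x(t)=\sqrt{2h}\,t\,b+o(t)$ for some $b\in\Omega$ with $\norm b=1$; (iii) pin down $b=a$ by comparing the value of $u_a$ along the curve with the asymptotics of the action potential $\phi_h(\cdot,\sigma a)$ used to build $u_a$, showing that any direction other than $a$ would contradict the minimality/calibration identity. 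Part (iii) is the delicate point: it requires sharp asymptotic expansions of $\phi_h(x,y)$ as $y\to\infty$ in a fixed direction (of Chazy type, $\phi_h(x,y)\sim \sqrt{2h}\,\norm y$ up to lower-order shape-dependent terms), and a uniqueness/rigidity argument that the Busemann-type function $u_a$ singles out the ray of direction $a$. Once the limit shape is identified as $a$, the hypothesis $\dim E\ge 2$ is used exactly to guarantee that configurations with $a_i\neq a_j$ — hence genuinely hyperbolic, with distinct limit velocities — are available and that the construction is not obstructed by collinearity; the theorem follows by taking $x(t)$ to be any calibrating curve of $u_a$ from $x_0$.
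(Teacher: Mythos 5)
Your overall architecture is the same as the paper's (directed horofunctions built from $\phi_h(\cdot,\sigma a)$ via the new H\"older estimate, calibrating curves through $x_0$ defined on all of $[0,+\infty)$ thanks to the Lax--Oleinik fixed point property, Marchal's Theorem to exclude interior collisions). The genuine gap is in your step (ii). Knowing that the calibrating curve escapes ($I(t)\gtrsim t^2$) and that $U(t)\to 0$ only makes the motion \emph{expansive}; it does not make it hyperbolic. Chazy's characterization that you invoke requires a \emph{linear lower bound on all mutual distances} ($\alpha t<r_{ij}(t)<\beta t$ for every $i<j$), and an expansive motion of positive energy can perfectly well be partially hyperbolic, with some clusters whose internal distances grow only like $t^{2/3}$. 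What one actually gets (after first ruling out superhyperbolicity with the action bounds, a point you also skip) is, via Marchal--Saari, $\gamma(t)=tb+O(t^{2/3})$ with $b\neq 0$ but a priori $b\notin\Omega$. Showing $b\in\Omega$ is the heart of the matter, and it is precisely the ``second and most sophisticated step'' of the paper's proof of Theorem \ref{thm-calib.direct.horo}: assuming the limit configuration has collisions, so that the configurational measure satisfies $\mu(b)=+\infty$, one builds explicit competitor curves (keeping the radial part $\rho_k$ of the approximating minimizers $\gamma_k\in\calC(q_0,q_k)$ but replacing the angular part by a fixed direction of bounded configurational measure, glued by a minimizer of bounded action), and the logarithmic divergence of $\int\rho_k^{-1}\,dt$ makes these competitors eventually cheaper, contradicting the minimality of the $\gamma_k$. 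Nothing in your outline supplies an argument of this kind, and no soft appeal to Chazy can replace it.

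Your step (iii) is also only a statement of what would be needed rather than an argument, and it is worth noting that the paper avoids the sharp asymptotics of $\phi_h(x,y)$ you call for: once $b\in\Omega$ is known, it identifies $b=\sqrt{2h}\,a/\norm{a}$ by a cone argument combined with Chazy's continuity of the limit shape (Lemma \ref{lema-cont.limitshape}) applied to the approximating minimizers joining $q_0$ to $q_k=\lambda_{n_k}a+o(\lambda_{n_k})$: if $b$ were not proportional to $a$, these curves would eventually be trapped in a cone around $b$ while their endpoints must lie in a disjoint cone around $a$. Finally, a minor correction: the hypothesis $\dim E\geq 2$ is not about the availability of collision-free limit shapes (these exist in dimension one as well); it is used solely so that Marchal's Theorem guarantees that action minimizers avoid collisions.
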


We emphasize the fact that the initial configuration can be chosen \emph{with} collisions.
This means that in such a case, the motion $x$ given by the theorem is continuous at $t=0$,
and defines a maximal solution $x(t)\in\Omega$ for $t>0$. 
For instance,
choosing $x_0=0\in E^N$,
the theorem gives the existence of ejections from the total collision configuration,
with prescribed positive energy and arbitrarily chosen limit shape.

Moreover, the well known Sundman's inequality (see Wintner \cite{Win}) implies that motions with total collisions have zero angular momentum. 
Therefore, we deduce the following non trivial corollary.

\begin{corollary}
For any configuration without collisions $a\in\Omega$
there is a hyperbolic motion with zero angular momentum
and asymptotic velocity $a$.  
\end{corollary}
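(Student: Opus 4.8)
The plan is to reduce the statement to Theorem~\ref{thm-princ}, together with the classical fact that a total collision forces the angular momentum to vanish.

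Let $a\in\Omega$. Since $a$ has no collisions we have $a\neq 0$, so $\norm{a}>0$, and we set $h=\frac{1}{2}\norm{a}^2>0$. I would apply Theorem~\ref{thm-princ} with the initial configuration $x_0=0\in E^N$ (an ejection from total collision, which the theorem explicitly allows), with the normalized limit shape $a/\norm{a}\in\Omega$, and with energy constant $h$. The resulting hyperbolic motion $x:[0,+\infty)\to E^N$ satisfies $x(0)=0$ and
\[
x(t)=\sqrt{2h}\;t\;\frac{a}{\norm{a}}+o(t)=t\,a+o(t)\qquad\text{as }t\to+\infty,
\]
so (the asymptotic velocity of a hyperbolic motion being the unique configuration $b$ with $x(t)=tb+o(t)$) its asymptotic velocity is exactly $a$. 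It remains to see that its angular momentum is zero.

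Because $\dim E\geq 2$, the angular momentum $C=\sum_i m_i\,r_i\wedge\dot r_i$ is a first integral of the motion. The Cauchy--Schwarz inequality for the mass scalar product gives
\[
\norm{C}\;\leq\;I(t)^{1/2}\,\norm{\dot x(t)}\;=\;\bigl(2\,I(t)\,h+2\,I(t)\,U(t)\bigr)^{1/2},
\]
where we used $\norm{\dot x}^2=2(h+U)$. As $t\to 0^+$ the motion tends to the total collision $x(0)=0$, hence $I(t)\to 0$; and by the classical Sundman--Sperling analysis of total collisions (see Wintner~\cite{Win}) one has $I(t)\sim c\,t^{4/3}$, which together with the Lagrange--Jacobi identity $\ddot I=2U+4h$ yields $U(t)\sim c'\,t^{-2/3}$, and hence $I(t)\,U(t)\to 0$. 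Therefore $\norm{C}\to 0$ as $t\to 0^+$, and since $C$ is constant along the motion we conclude $C=0$.

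I expect that the only genuine obstacle here is Theorem~\ref{thm-princ} itself; once it is granted, the corollary is immediate, the single additional input being the (classical) incompatibility between total collisions and a nonzero angular momentum, which is already quoted in the discussion above.
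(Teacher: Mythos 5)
Your overall strategy coincides with the paper's: apply Theorem~\ref{thm-princ} with $x_0=0$ to produce a hyperbolic ejection from total collision, and then invoke the classical fact that a motion undergoing a total collision must have vanishing angular momentum. Your explicit choice $h=\tfrac{1}{2}\norm{a}^2$, so that the normalized statement of the theorem yields $x(t)=ta+o(t)$ and hence asymptotic velocity exactly $a$, is a correct and welcome clarification; the paper leaves this bookkeeping implicit.

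The one place where your write-up goes astray is the paragraph that tries to \emph{re-derive} $C=0$ from the rate of collapse $I(t)\sim c\,t^{4/3}$. In the classical exposition (Sundman; see Wintner~\cite{Win}) the logical order is the opposite: one first proves that a total collision forces $C=0$, using the full Sundman inequality $\norm{C}^2+\tfrac{1}{4}\dot I^2\leq 2IK$ — which refines the Cauchy--Schwarz bound $\norm{C}^2\leq 2IK$ you wrote down by the extra $\tfrac{1}{4}\dot I^2$ term, and that refinement is what makes the argument close — and only afterwards, with $C=0$ in hand, does one establish the Sundman--Sperling asymptotics $I\sim c\,t^{4/3}$, $U\sim c'\,t^{-2/3}$, together with the approach to central configurations. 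Citing those asymptotics to deduce $IU\to 0$ and hence $C=0$ therefore presupposes the very conclusion you want. The cleaner and correct move is exactly the paper's: once the ejection from total collision is produced, quote Sundman's theorem (``total collision implies zero angular momentum'') as a black box and stop there. Your own last sentence in fact acknowledges this; the intermediate paragraph should simply be dropped rather than offered as an independent derivation.
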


It should be said that the hypothesis that excludes
the collinear case $\dim E=1$ is only required to ensure
that action minimizing curves do not suffer collisions.
The avoidance of collisions is thus assured by the celebrated
Marchal's Theorem that we state below in Sect. \ref{s-var.setting}.
The collinear case could eventually be analyzed
in the light of the results obtained by Yu and Zhang \cite{YuZha}.

Theorem \ref{thm-princ} should be compared with that obtained
by the authors in \cite{MadVen} which concerns completely parabolic motions.
We recall that completely parabolic motions (as well as total collisions)
have a very special asymptotic behaviour.
In his work of 1918 \cite{Cha1},
Chazy proves that the normalized configuration
must approximate the set of normal central configurations.
Under a hypothesis of non-degeneracy,
he also deduces the convergence to a particular central configuration. 
This hypothesis is always satisfied in the three body problem.
However,
a first counterexample with four bodies in the plane
was founded by Palmore \cite{Pal},
allowing thus the possibility of motions
with infinite spin (see Chenciner \cite{Che1} p.281).

In all the cases, Chazy's Theorem prevents arbitrary limit shapes
for completely parabolic motions as well as for total collisions.
In this sense,
let us mention for instance the general result by Shub \cite{Shu}
on the localisation of central configurations,
showing that they are isolated from the diagonals.

We should also mention that the confinement of the
asymptotic configuration to the set of central configurations,
both for completely parabolic motions and for total collisions,
extends to homogeneous potentials of degree $\alpha\in (-2,0)$.
For these potentials the mutual distances must grow like $t^{2/(2-\alpha)}$ in
the parabolic case, and must decay like $\abs{t-t_0}^{2/(2-\alpha)}$
in the case of a total collision at time $t=t_0$.
On the other hand,
it is known that potentials giving rise to strong forces near collisions
can present motions of total collision with non-central asymptotic configurations.
We refer the reader to the comments on the subject by Chenciner in \cite{Che3}
about the Jacobi-Banachiewitz potential, and to Arredondo et al. \cite{ArPeChSt}
for related results on the dynamics of total collisions in the case of
Schwarzschild and Manev potentials.

Let us say that there is another natural way to prove the existence of hyperbolic motions, 
using the fact that the Newtonian force vanishes when all mutual distances diverge.
We could call these motions \emph{almost linear}.
The way to do that is as follows.
Suppose first that $(x_0,a)\in \Omega\times\Omega$
is such that the half-straight line given by $\bar{x}(t)=x_0+ta$, $t>0$
has no collisions ($\bar{x}(t)\in\Omega$ for all $t>0$).
Consider now the motion $x(t)$ with initial condition
$x(0)=x_0$ and $\dot x(0)=\alpha a$ for some positive constant $\alpha$.
It is not difficult to prove that, for $\alpha>0$ chosen big enough,
the trajectory $x(t)$ is defined for all $t>0$, and moreover,
it is a hyperbolic motion with limit velocity $b\in\Omega$ close to $\alpha a$.
In particular,
the limit shape of such a motion can be obtained
as close as we want from the shape of $a$.

The previous construction is unsatisfactory for several reasons.
First, we do not get exactly the desired limit shape but a close one.
This approximation can be made as good as we want,
but we lose the control of the energy constant $h$ of the motion,
whose order of magnitude is that of $\alpha^2$.
Secondly,
it is not possible to apply this method when the half-straight line
$\bar{x}$ presents collisions.
For instance this is the case if we take $a=z_0-x_0$
for any choice of $z_0\in E^N\setminus\Omega$.
Finally,
even if the homogeneity of the potential can be exploited
to find a new hyperbolic motion
with a prescribed positive energy constant,
and the same limit shape,
we lose the control on the initial configuration.
Indeed,
if $x$ is a hyperbolic motion defined for all $t\geq 0$
with energy constant $h$,
then the motion $x_\lambda$ defined by
$x_\lambda(t)=\lambda\,x(\lambda^{-3/2}t)$
is still hyperbolic with energy constant $\lambda^{-1}h$.
Moreover, the limit shapes of $x$ and $x_\lambda$ are the same,
but $x_\lambda(0)=\lambda x(0)$ meaning that
the initial configuration is dilated by the factor $\lambda$.


\subsection{Other expansive motions}
\label{s-other}

Hyperbolic motions are part of the family of
\emph{expansive motions} which we define now.
In order to classify them,
as well as for further later uses,
we summarize below a set of well-known facts
about the possible evolutions of the motions
in the Newtonian $N$-body problem.

\begin{definition}
[Expansive motion]
A motion $x:[0,+\infty)\to \Omega$ is said to be expansive
when all the mutual distances diverge.
That is, when $r_{ij}(t)\to+\infty$ for all $i<j$.
Equivalently, the motion is expansive if $U(t)\to 0$.
\end{definition}

We will see that there are three well defined classes of expansive motions. 
First of all we must observe that,
since we $U(t)\to 0$ implies $\norm{\dot x(t)}\to\sqrt{2h}$,
expansive motions can only occur with $h\geq 0$.

In his pioneering work,
Jean Chazy proposed a classification of motions in terms of their final evolution.
In the Keplerian case there is only one distance function to consider,
and the three classes of motions are elliptic, parabolic and hyperbolic.
Extending the analysis for $N\geq 3$,
he introduced several hybrid classes of motions,
such as hyperbolic-elliptical in which some distances diverge and others remain bounded.
In his attempt to achieve a full classification,
he obtains the theoretical possibility of complex behaviours
such as the so-called oscillatory motions or the superhyperbolic motions,
see Saari and Xia \cite{SaaXia}.
After the works of Chazy, and for quite some time,
specialists have doubted the existence of such motions
because of his complex and paradoxical appearance. 
The same can be said about the existence of pseudo-collision
singularities, which, as is well known, are impossible if $N=3$.

Let us say that the existence results of oscillatory motions goes
back to the work of Sitnikov \cite{Sit} for the spatial restricted
three-body problem.
Then, the main idea in this paper was extended to
the unrestricted problem by Alekseev
(see Moser \cite{Mos} for a more detailed explanation of this
and other related developments).
Sitnikov's ideas were undoubtedly very important
for the construction of the first example of a motion
with a pseudo-collision singularity with five bodies by Xia \cite{Xia}.
With respect to superhyperbolic motions we must say that,
although there are no known examples of them,
they exist at least in a weak sense
for the collinear four-body problem
(with regularisation of binary collisions) \cite{SaaXia}.

As we will see, to achieve the proof of the announced results,
it will be crucial to show that certain motions that will be obtained
are not superhyperbolic,
and that they do not suffer collisions nor pseudo-collisions.

We need to introduce two functions which play an important role
in the classical description of the dynamics.
For a given motion, these two functions are
\[
r(t)=\min_{i<j}r_{ij}(t)\quad \text{ and }\quad
R(t)=\max_{i<j}r_{ij}(t)\,
\]
the minimum and the maximum separation between the bodies
at time $t$.
We now recall some  facts concerning the possible behaviours
of the trajectories as $t\to +\infty$ in terms of the behaviours
of these functions.

We start by fixing some notation and making some remarks.

\begin{notation}
Given positive functions $f$ and $g$,
we will write $f\approx g$ when the quotient of them
is bounded between two positive constants.
\end{notation}

\begin{remark}
It is easy to see that $r\approx U^{-1}$.
Moreover, 
$R^2\approx I_G$ where $I_G$  denotes the moment of inertia
with respect to the center of mass $G$ of the configuration.
To see this it suffices to write $I_G$
in terms of the mutual distances.
\end{remark}

\begin{remark}
\label{rmk-configur.measure}
The function $\mu=U\,I_G\,^{1/2}$ is homogeneous
of degree zero.
Some authors call this function the
\emph{configurational measure}.
According to the previous remark we have $\mu\approx R\,r^{-1}$.
\end{remark}

\begin{remark}
By König's decomposition we have that $I=I_G+M\norm{G}_E^2$
where $M$ is the total mass of the system.
Therefore, using the Largange-Jacobi identity $\ddot I=4h+2U$ we deduce that,
if $h>0$ and the center of mass is at rest,
then $R(t)>At$ for some constant $A>0$.
\end{remark}

\begin{theorem*}[1922, Chazy \cite{Cha2} pp. 39 -- 49]
Let $x(t)$ be a motion with energy constant $h>0$ and defined for all $t>t_0$.

\begin{enumerate}
\item[(i)] The limit
\[\lim_{t\to +\infty}R(t)\,r(t)^{-1}=L\in [1,+\infty]\]
always exists.

\item[(ii)] If $L<+\infty$ then there is a configuration $a\in\Omega$,
and some function $P$, which is analytic in a neighbourhood of $(0,0)$,
such that for every $t$ large enough we have
\[x(t)=ta-\log(t)\,\nabla U(a)+P(u,v)\]
where $u=1/t$ and $v=\log(t)/t$.
\end{enumerate}
\end{theorem*}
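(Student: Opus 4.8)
The plan is to follow Chazy's classical approach; the existence of the limit in (i) rests on Sundman--Sperling--type estimates, while the hard part will be the analytic expansion in (ii). By Galilean invariance we may put the centre of mass at rest at the origin, so that $I=I_G$ and the Lagrange--Jacobi identity becomes $\ddot I=4h+2U$; since $U\ge 0$ and $h>0$ this gives $\ddot I\ge 4h>0$. Hence $\dot I$ is increasing and eventually positive, $I(t)\to +\infty$ with $I(t)\ge 2ht^2+O(t)$, and by the last Remark $R(t)>At$ for some $A>0$. We also record Sundman's inequality $\dot I^2\le 8I(h+U)$, which, since $U\approx r^{-1}$, bounds the growth of $R$ from above in terms of $r^{-1}$.

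\emph{Part (i).} The bound $L\ge 1$ is immediate from $R\ge r$, so the point is the existence of the limit. I would use the configurational measure $\mu=U\,I_G^{1/2}$, a smooth positive function on $(t_0,+\infty)$ which by Remark \ref{rmk-configur.measure} satisfies $\mu\approx R\,r^{-1}$, and distinguish two cases. If $\mu$ is bounded, then $r\approx R$; feeding the resulting bound $r\gtrsim t$ back into $\ddot I=4h+2U$ with $U\approx r^{-1}$ gives $I\lesssim t^2$, hence $R\approx t$, so every $r_{ij}(t)\approx t$ and $\norm{\ddot x}=\norm{\nabla U(x)}\lesssim t^{-2}$, which is integrable at infinity. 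Then $\dot x(t)$ converges and $x(t)/t\to a$ for some $a\in\Omega$ (the limits $\norm{a_i-a_j}=\lim t^{-1}r_{ij}(t)$ are positive), and consequently $R(t)/t$, $r(t)/t$ and $\mu(t)$ all converge, with $L<+\infty$. If instead $\mu$ is unbounded, I must show $\mu(t)\to+\infty$; this is the technical heart of (i). Using the second--order differential inequalities satisfied by the squared mutual distances $r_{ij}^2$ together with the convexity of $I$, one shows that once $\mu$ exceeds a suitable threshold it is trapped and forced to increase --- a tight subsystem, once it has separated from the remaining bodies, cannot re-merge with them --- so that an unbounded $\mu$ cannot return to bounded values and must tend to $+\infty$. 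These are the Sundman--Sperling--type estimates of \cite{Cha2}.

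\emph{Part (ii).} If $L<+\infty$ we are, by the first case above, in the hyperbolic regime: $x(t)=ta+o(t)$ with $a\in\Omega$, $\norm{a}^2=2h$, and $U(t)\to 0$. To sharpen this I would integrate Newton's equations once more: by homogeneity $\nabla U(ta)=t^{-2}\nabla U(a)$, so $\dot x(t)=a-t^{-1}\nabla U(a)+o(t^{-1})$, and a further iteration (using $\nabla U(x(t))=t^{-2}\nabla U(a)+O(t^{-3}\log t)$ along the orbit) gives $x(t)=ta-\log(t)\,\nabla U(a)+b+o(1)$ for a constant vector $b$. It then remains to show that this remainder, together with $b$, assembles into a function $P$ that is analytic in $u=1/t$ and $v=\log(t)/t$ near $(0,0)$, with $P(0,0)=b$.

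For this last step the plan is to set $x=ta-\log(t)\nabla U(a)+w$, substitute into Newton's equations, and pass to the variables $(u,v)$: since $\dot u=-u^2$ and $\dot v=u^2-uv$, the pair $(u,v)$ obeys an autonomous polynomial system with the singularity pushed to $(u,v)=(0,0)$, and the equation for $w$ becomes analytic in $(u,v,w,\dot w)$ there. One then seeks $w$ as a function of $(u,v)$ whose graph is invariant, via a majorant--series fixed--point argument in a Banach space of functions analytic on a polydisc --- an irregular--singular--point analogue of the Cauchy--Kovalevskaya / Briot--Bouquet theorem. This is the main obstacle: $t=+\infty$ is an irregular singularity, the formal expansion in powers of $1/t$ alone is obstructed by secular terms --- which is precisely what produces the $\log t$ in the leading correction --- so one must carry $v=\log(t)/t$ as a second, independent formal variable and prove convergence of the resulting double series on a full polydisc, with uniform control of its domain. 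Making these estimates rigorous is the substantive content of \cite{Cha2}.
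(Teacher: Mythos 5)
First, a point of framing: the paper does not prove this statement at all --- it is quoted as a classical theorem of Chazy (1922) and used as background --- so there is no internal proof to compare yours with; the only meaningful benchmark is Chazy's original argument, and your proposal ultimately \emph{invokes} that argument rather than reproducing it. Your treatment of the easy regime is fine: with the centre of mass fixed, $\ddot I=4h+2U\geq 4h$ gives $R(t)\gtrsim t$; if the configurational measure $\mu\approx R\,r^{-1}$ stays bounded then $r\approx R\approx t$, the force is $O(t^{-2})$, $\dot x$ converges to some $a\in\Omega$, and $R/r$ converges to a finite limit; likewise the two integrations of Newton's equations giving $x(t)=ta-\log(t)\,\nabla U(a)+b+o(1)$ in part (ii) are correct as far as they go.

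However, the two steps that carry the whole weight of the theorem are not proved. In (i), the entire difficulty is to rule out oscillation of $R/r$: you must show that if $\mu$ is unbounded then $\mu(t)\to+\infty$, and your justification is a one-sentence heuristic (``a tight subsystem, once it has separated, cannot re-merge'') followed by the admission that ``these are the Sundman--Sperling-type estimates of \cite{Cha2}''. No differential inequality for the $r_{ij}^2$ is written down, no threshold or trapping mechanism is exhibited, so as it stands this is an appeal to the very reference whose theorem you are asked to prove. In (ii), what must be established is analyticity of $P$ in $(u,v)=(1/t,\log(t)/t)$ on a neighbourhood of $(0,0)$, which is far stronger than the $o(1)$ expansion you derive; your plan --- pass to the variables satisfying $\dot u=-u^2$, $\dot v=u^2-uv$, and obtain $w$ as an invariant analytic graph by a majorant/fixed-point argument at the irregular singular point --- is a reasonable road map, but you explicitly concede that ``making these estimates rigorous is the substantive content of \cite{Cha2}''. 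So the proposal is a correct sketch of the classical strategy, not a proof: both the existence of the limit $L$ outside the hyperbolic regime and the convergence of the two-variable expansion are left to Chazy.
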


As Chazy pointed out, surprisingly Poincaré made the mistake of 
omitting the $\log(t)$ order term in his
\emph{``Méthodes Nouvelles de la Mécanique Céleste"}.

Subsequent advances in this subject were recorded much later,
when Chazy's results on final evolutions were included
in a more general description of motions.
From this development we must recall the following theorems.
Notice that none of them make assumptions
on the sign of the energy constant $h$.

\begin{theorem*}[1967, Pollard \cite{Pol}]
Let $x$ be a motion defined for all $t>t_0$.
If $r$ is bounded away from zero then we have that
$R=O(t)$ as $t\to +\infty$.
In addition $R(t)/t\to +\infty$ if and only if $r(t)\to 0$.
\end{theorem*}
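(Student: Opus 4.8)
The plan is to reduce the whole statement to the Lagrange--Jacobi identity $\ddot I=4h+2U$, which holds for every motion regardless of the position of the center of mass. By the König decomposition $I=I_G+M\norm{G}_E^2$, with $G$ describing a uniform straight line, the term $M\norm{G}_E^2$ is a polynomial of degree at most $2$ in $t$; together with the relations $R^2\approx I_G$ and $r\approx U^{-1}$ recalled above this yields the dictionary
\[
R=O(t)\iff I=O(t^2),\qquad R(t)/t\to+\infty\iff I(t)/t^2\to+\infty,
\]
while ``$r$ bounded away from zero'' means ``$U$ bounded'' and ``$r(t)\to 0$'' means ``$U(t)\to+\infty$''. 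So everything becomes a question about the twice differentiable function $I$, whose second derivative is $4h+2U$.

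I would dispose of the two easy implications first. If $U\le C$ then $\ddot I\le 4h+2C$, so two integrations give $I(t)\le(2h+C)t^2+O(t)$ and hence $R=O(t)$; this is the first assertion. (Equivalently and more concretely: $U$ bounded forces the kinetic energy $T=h+U$, hence each $\norm{\dot r_i}_E$, hence each $\dot r_{ij}$, to be bounded, so $r_{ij}(t)=O(t)$.) For the implication ``$r(t)\to 0\Rightarrow R(t)/t\to+\infty$'' one has $U(t)\to+\infty$, so $\ddot I(t)\to+\infty$, and the elementary fact that a $C^2$ function $f$ with $f''(t)\to+\infty$ satisfies $f(t)/t^2\to+\infty$ (Taylor-expand $f$ from a time past which $f''\ge 2M$) gives $I(t)/t^2\to+\infty$.

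The heart of the theorem is the reverse implication, which I would prove by contraposition: \emph{assuming $r(t)\not\to 0$, produce times $t_n\to+\infty$ with $R(t_n)/t_n$ bounded}, so that $R/t\to+\infty$ fails. If $\liminf_t r(t)>0$, then $U$ is eventually bounded and the first assertion already gives $R=O(t)$, done. The remaining case is $\liminf_t r(t)=0$ while $\limsup_t r(t)=:\sigma>0$; pick $t_n\to+\infty$ with $r(t_n)\ge\sigma/2$, so that $U(t_n)$, and with it all the velocities $\norm{\dot r_i(t_n)}_E$, stay bounded. It remains to bound $I(t_n)$ from above by $O(t_n^2)$, and here is where I expect the genuine difficulty: between consecutive $t_n$ the potential $U$, hence $\ddot I$, may have become arbitrarily large, but only on very short time intervals and only inside a \emph{cluster} of bodies undergoing a close approach, so that the net effect of these episodes on $I$ is negligible. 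Making this precise means decomposing the configuration, at the relevant times, into clusters (groups whose internal distances are comparable to $r$) and bounding $I$ by the internal moments of inertia of the clusters --- which remain small, a near-collision being unable to spread a cluster out --- plus the moment of inertia of the configuration of cluster centers of mass; the latter has bounded acceleration, since in the center of mass of a cluster the internal forces cancel and only the bounded outer forces survive, so it contributes only $O(t^2)$. Carrying this out --- specifying when a cluster forms or dissolves, controlling the drift of a cluster's center of mass over long time spans, and refining the crude bound $\abs{\frac{d}{dt}\sqrt I}\le\sqrt{2(h+U)}$ (Cauchy--Schwarz) into one that detects the cluster structure --- is the analytic core of Pollard's argument, and the step where all the work lies.
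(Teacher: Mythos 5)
The paper does not prove this statement: it is quoted as a classical result of Pollard with a citation to \cite{Pol}, so there is no in-paper argument to compare against, and the proposal can only be judged on its own merits.

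Your reduction to the convex function $I$, the first assertion ($U$ bounded $\Rightarrow\ddot I$ bounded $\Rightarrow I=O(t^2)$), and the direction $r(t)\to 0\Rightarrow R(t)/t\to+\infty$ are all correct. The gap is the one you flag yourself: for the converse you correctly reduce to ``find $t_n\to+\infty$ with $U(t_n)\le C$ and show $I(t_n)=O(t_n^2)$,'' but then leave the estimate as an unworked sketch of a cluster decomposition. That route is not carried out, and it is also the wrong one: the bound you call too crude, $\abs{\dot J}\le\sqrt{2(h+U)}$ with $J=\sqrt{I}$, already does the whole job once it is combined with the Lagrange--Jacobi identity instead of used in isolation. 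Since $2(h+U)=\ddot I-2h$, one has the pointwise inequality $\dot J^{\,2}\le\ddot I-2h$, valid for all $t$, near-collisions included. Integrating on $[t_0,t_n]$,
\[
\int_{t_0}^{t_n}\dot J^{\,2}\,dt\;\le\;\dot I(t_n)-\dot I(t_0)-2h\,(t_n-t_0),
\]
and since $U(t_n)\le C$ the Cauchy--Schwarz inequality applied to $\dot I=2\inner{x}{\dot x}$ gives $\dot I(t_n)\le 2J(t_n)\sqrt{2(h+C)}$, so the right side is $O\!\left(J(t_n)\right)+O(t_n)$. A second Cauchy--Schwarz in time,
\[
\bigl(J(t_n)-J(t_0)\bigr)^2\le (t_n-t_0)\int_{t_0}^{t_n}\dot J^{\,2}\,dt,
\]
then yields $J(t_n)^2\le O(t_n)\,J(t_n)+O(t_n^2)$, a quadratic inequality that forces $J(t_n)=O(t_n)$ and hence $R(t_n)=O(t_n)$. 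The phenomenon you worry about --- $\ddot I$ blowing up during close approaches between the good times --- is completely absorbed by this computation, because the Lagrange--Jacobi identity trades the time integral of $2U$ for the boundary term $\dot I(t_n)$, and the latter is small precisely because $U(t_n)$ is. No cluster analysis is needed; the same identity that gave you the two easy implications is also what closes the hard one.
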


This leads to the following definition.

\begin{definition} A motion is said to be superhyperbolic when
\[\limsup_{t\to +\infty}\;R(t)/t=+\infty.\]
\end{definition}

A short time later it was proven that,
either the quotient $R(t)/t\to +\infty$,
or $R=O(t)$ and the system expansion can be described more accurately.

\begin{theorem*}[1976, Marchal-Saari \cite{MarSaa}]
Let $x$ be a motion defined for all $t>t_0$.
Then either $R(t)/t \to +\infty$ and $r(t)\to 0$,
or there is a configuration $a\in E^N$ such that $x(t)=ta+O(t^{2/3})$.
In particular, for superhyperbolic motions the quotient $R(t)/t$ diverges.
\end{theorem*}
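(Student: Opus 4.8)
The plan is to prove two facts — (i) that a motion with $\limsup_{t\to+\infty}R(t)/t=+\infty$ actually has $R(t)/t\to+\infty$, hence $r(t)\to 0$ by Pollard's theorem, and (ii) that a motion with $R=O(t)$ satisfies $x(t)=ta+O(t^{2/3})$ for some $a\in E^N$ — from which, together with Pollard's theorem, the dichotomy follows, (i) being exactly the last assertion. Throughout one may assume, after subtracting the uniform motion of the centre of mass, that the centre of mass is at rest at the origin, so that $I=I_G\approx R^2$; the tools are the Lagrange--Jacobi identity $\ddot I=2U+4h$, Sundman's inequality $\dot I^{\,2}\le 8\,I\,(U+h)$ (Cauchy--Schwarz applied to $\tfrac12\dot I=\inner{x}{\dot x}$ and $\norm{\dot x}^2=2(U+h)$), and the comparisons $r\approx U^{-1}$, $R^2\approx I_G$.

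For (i) I would argue as follows. Since $I\approx R^2$ and, by integrating $\ddot I=2U+4h$ twice, $I(t)$ equals a quadratic polynomial in $t$ plus $2\int_{t_0}^t\!\int_{t_0}^sU(\tau)\,d\tau\,ds$, the assumption $\limsup R(t)/t=+\infty$ forces this double integral to be superquadratic along a sequence of times; as $t\mapsto\int_{t_0}^tU$ is nondecreasing, this gives $T^{-1}\!\int_{t_0}^TU\to+\infty$ along a sequence, and then $U\approx r^{-1}$ yields $\liminf_t r(t)=0$. The delicate point — and the first place where soft arguments do not suffice — is to promote $\liminf r=0$ to $r(t)\to 0$, ruling out recurrent escapes from near-collisions; this requires the sharper estimates of Pollard and Saari. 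Once $r(t)\to 0$ is known, Pollard's theorem gives $R(t)/t\to+\infty$.

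For (ii), assume $R=O(t)$, hence $I=O(t^2)$. Integrating $\ddot I=2U+4h$ twice and using $I=O(t^2)$ together with the monotonicity of $t\mapsto\int^tU$ gives $\int_{t_0}^tU(s)\,ds=O(t)$, so also $\dot I(t)=O(t)$ and $\int_{t_0}^t\norm{\dot x}^2\,ds=O(t)$. I would then induct on $N$. Partition the bodies into the maximal clusters inside which all mutual distances are $o(t)$, with $M_a,\xi_a$ the mass and centre of mass of the $a$-th cluster. When there are at least two clusters, one of them non-trivial, both reductions involve fewer than $N$ bodies: the $\xi_a$ obey a Newtonian problem of point masses $M_a$ perturbed by a force of size $o(t^{-2})$ (cluster diameters are $o(t)$, inter-cluster distances $\approx t$), so by induction $\xi_a(t)=t\alpha_a+O(t^{2/3})$ with the $\alpha_a$ pairwise distinct; inside each cluster the relative motion is a Newtonian problem with the same integrable perturbation and maximal separation $o(t)$, so by induction it equals $t\beta+O(t^{2/3})$, whence $O(t^{2/3})$ since it is $o(t)$. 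Adding the two gives $x(t)=ta+O(t^{2/3})$ with $a_i=\alpha_a$ for $i$ in the $a$-th cluster. It remains to treat the two degenerate shapes of the partition: if every cluster is a singleton the motion is of hyperbolic type, governed by Chazy's theorem recalled above, with error $O(\log t)=O(t^{2/3})$; if there is a single cluster the whole motion is $o(t)$, the regime of completely parabolic or bounded motions, and one checks directly from $\ddot I=2U+4h$ and Sundman's inequality that then $R=O(t^{2/3})$ — the exponent $2/3$ being the sharp parabolic escape rate $\rho(t)\approx t^{2/3}$ of an isolated pair.

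The main obstacle is not the clustering bookkeeping but the two points where one must genuinely control $U$, $I$ and $\dot I$ along the trajectory rather than rely on the coarse picture: promoting $\liminf r=0$ to $r\to 0$ in (i), and proving that the cluster partition eventually stabilises (no endless re-partnering of bodies) in (ii). These are where essentially all of the work lies; granting them, the last assertion of the theorem is precisely (i).
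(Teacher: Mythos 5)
The paper does not prove this theorem; it is imported verbatim from Marchal--Saari \cite{MarSaa} and used as a black box, so there is no in-paper proof to compare against. Your sketch tracks the broad shape of the original argument (Lagrange--Jacobi integrated twice, cluster decomposition, induction on $N$), but as written it is a plan rather than a proof, and the two places you flag as ``where essentially all of the work lies'' are exactly where the theorem is nontrivial; leaving them open means the proposal does not establish the result.

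Concretely: in (i), the elementary estimate you carry out only gives $\liminf r=0$, which in fact already follows from the contrapositive of Pollard's first statement (if $R\neq O(t)$ then $r$ cannot be bounded away from zero); the genuine content is the upgrade to $r(t)\to 0$, i.e.\ ruling out recurrent excursions of $r$ away from $0$ while $R/t$ is unbounded along some other sequence, and that is the heart of Marchal--Saari, not a refinement that can be deferred. In (ii), the partition by ``$r_{ij}=o(t)$'' is not a priori well defined under the bare hypothesis $R=O(t)$: a given mutual distance can a priori oscillate between scales $o(t)$ and $\Theta(t)$ over different time intervals, so the equivalence relation you intend to induct on may fail to stabilize, and showing it does is precisely the crux you set aside. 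Two further points need care. The pairwise distinctness of the cluster velocities $\alpha_a$ is not built into the definition of the partition; it must be recovered a posteriori from the induction hypothesis (if $\alpha_a=\alpha_b$ then $\xi_a-\xi_b=O(t^{2/3})=o(t)$, forcing a merger), and this circularity has to be broken. And in the single-cluster case, the assertion that $R=o(t)$ together with $\ddot I=2U+4h$ and Sundman's inequality ``directly'' yields $R=O(t^{2/3})$ is itself a nontrivial growth-rate gap theorem for subhyperbolic motions, not a one-line check. Granting all of these, the overall organization and the reduction of the final assertion to item (i) are sound, and the strategy does match the one actually used in \cite{MarSaa}.
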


Of course this theorem does not provide much information
in some cases, for instance if the motion is bounded then
we must have $a=0$.
On the other hand,
it admits an interesting refinement concerning the
the behaviour of the subsystems.
More precisely,
when $R(t)=O(t)$ and the configuration $a$ given by the theorem
has collisions the system decomposes naturally into subsystems,
within which the distances between the bodies
grow at most like $t^{2/3}$.
Considering the internal energy of each subsystem,
Marchal and Saari (Ibid, Theorem 2 and corollary 4 pp.165-166)
gave a decription of the possible dynamics
that can occur within the subsystems.
From these results we can easily deduce the following.

\begin{theorem*}[1976, Marchal-Saari \cite{MarSaa}]
Suppose that $x(t)=ta+O(t^{2/3})$ for some $a\in E^N$,
and that the motion is expansive.
Then, for each pair $i<j$ such that $a_i=a_j$ we have $r_{ij}\approx t^{2/3}$.
\end{theorem*}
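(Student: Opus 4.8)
The plan is to split the claimed relation $r_{ij}\approx t^{2/3}$ into its two inequalities. The upper bound is immediate: from $x(t)=ta+O(t^{2/3})$ we get $\norm{r_i(t)-ta_i}_E=O(t^{2/3})$ for each body, so for a pair with $a_i=a_j$ the triangle inequality gives
\[
r_{ij}(t)\leq\norm{r_i(t)-ta_i}_E+\norm{r_j(t)-ta_j}_E=O(t^{2/3}).
\]
All the work is in the lower bound $r_{ij}(t)\geq c\,t^{2/3}$ for some $c>0$ and all large $t$.

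First I would isolate the cluster $C$ consisting of all indices $k$ with $a_k=a_i$ --- we may assume $\abs{C}\geq 2$, otherwise there is nothing to prove --- and pass to the internal coordinates $y_k=r_k-G_C$, where $G_C$ is the center of mass of the cluster. Two observations make this a tractable subsystem. On one hand, bodies outside $C$ have a different limit direction, so their distances to the bodies of $C$ grow like $t$; hence the force they exert on the cluster, and therefore $\ddot G_C$, is $O(t^{-2})$, and the $y_k$ obey the equations of the isolated $\abs{C}$-body problem --- with internal Newtonian potential $U_C$ --- up to an integrable perturbation. On the other hand $x(t)=ta+O(t^{2/3})$ forces $y_k(t)=O(t^{2/3})$, so the internal moment of inertia satisfies $I_C(t)=O(t^{4/3})$, while expansiveness gives $U_C(t)\to 0$; in particular the internal motion is itself expansive.

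The decisive step is to pin down the limiting internal energy. Since $\frac12\norm{\dot x(t)}^2=h+U(t)$ stays bounded, each $\norm{\dot y_k(t)}_E$ is bounded, so the internal energy $h_C(t)=\frac12\sum_{k\in C}m_k\norm{\dot y_k}_E^2-U_C(t)$ satisfies $\dot h_C(t)=O(t^{-2})$ and hence converges to a limit $h_C^\infty$ with $h_C(t)-h_C^\infty=O(t^{-1})$. Feeding this into the Lagrange-Jacobi identity for the perturbed subsystem, $\ddot I_C(t)=4h_C(t)+2U_C(t)+O(t^{-4/3})\to 4h_C^\infty$, excludes $h_C^\infty<0$ (which would send $I_C\to-\infty$) and $h_C^\infty>0$ (which would force $I_C(t)$ to grow like $t^2$, against $I_C(t)=O(t^{4/3})$); hence $h_C^\infty=0$, and the cluster moves asymptotically like a completely parabolic motion. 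I would then invoke the Marchal-Saari description of the internal dynamics of subsystems (Theorem 2 and Corollary 4 of \cite{MarSaa}) to conclude that the normalized configuration $z(t)=I_C(t)^{-1/2}(y_k(t))_{k\in C}$ accumulates only on the set of normalized central configurations of the $\abs{C}$-body problem, a compact set lying in the collision-free region. Therefore $\norm{z_i(t)-z_j(t)}_E$ stays between two positive constants, and so does the scale-invariant quantity $I_C^{1/2}U_C$, giving $U_C(t)\approx I_C(t)^{-1/2}$. The Lagrange-Jacobi identity then reduces to $\ddot I_C(t)\approx I_C(t)^{-1/2}$, which integrates to $I_C(t)\approx t^{4/3}$. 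Combining, $r_{ij}(t)=I_C(t)^{1/2}\norm{z_i(t)-z_j(t)}_E\approx t^{2/3}$.

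The routine parts of this plan are the triangle-inequality upper bound and the algebra of the energy and Lagrange-Jacobi identities. The genuine obstacle --- which I would cite rather than reprove --- is the Marchal-Saari structure theorem for subsystems: that an expansive subsystem of an $N$-body motion whose limiting internal energy vanishes has, asymptotically, its shape confined to a compact collision-free set, namely the central configurations. Once that is granted, the a priori bound $I_C(t)=O(t^{4/3})$ inherited from $x(t)=ta+O(t^{2/3})$ reduces everything else to elementary comparison arguments for the scalar function $I_C(t)$.
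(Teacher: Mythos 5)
The paper does not actually prove this statement: it is quoted as a theorem of Marchal--Saari, with the remark that it is ``easily deduced'' from their Theorem 2 and Corollary 4 on the internal dynamics of subsystems, so there is no in-paper proof to compare yours against line by line. Your reconstruction is sound and it locates the genuinely hard input in the right place. The routine steps all check out: the triangle-inequality upper bound; the cluster reduction, with external forces and $\ddot G_C$ of order $O(t^{-2})$ because bodies outside the cluster separate linearly from it; the estimate $\dot h_C=O(t^{-2})$, hence $h_C(t)=h_C^\infty+O(t^{-1})$; the perturbed Lagrange--Jacobi identity $\ddot I_C=4h_C+2U_C+O(t^{-4/3})$; and the exclusion of $h_C^\infty\neq 0$ from $I_C\geq 0$ and $I_C=O(t^{4/3})$. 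Two comparative remarks. First, the scale estimate $I_C\approx t^{4/3}$ does not actually need the confinement to central configurations: the configurational measure $U_C\,I_C^{1/2}$ is bounded below by the positive minimum of $U_C$ on the unit sphere, so $U_C\geq c\,I_C^{-1/2}\geq c'\,t^{-2/3}$ unconditionally, and integrating the Lagrange--Jacobi inequality already yields $I_C\approx t^{4/3}$; what genuinely requires the imported structure theorem is the per-pair lower bound, i.e.\ that the normalized intra-cluster distances $\norm{z_i-z_j}$ stay bounded away from zero (a compact, collision-free confinement of the cluster shape, where compactness of the set of normalized central configurations is Shub's result quoted in the paper). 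You identified this correctly as the step to cite rather than reprove. Second, be aware that the cited results of Marchal--Saari may be phrased directly as a dichotomy on intra-cluster distances (bounded or recurrently small approach versus growth $\approx t^{2/3}$) rather than as a central-configuration confinement; in that formulation expansiveness rules out the non-parabolic alternative at once, which is presumably the ``easy deduction'' the paper has in mind, and part of your energy bookkeeping would then be a correct re-derivation of facts already contained in their subsystem analysis. Either way your argument is complete modulo the same citation the paper itself relies on, so there is no gap to flag.
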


Notice that we can always consider the \emph{internal motion}
of the system, that is, looking at the relative positions
of the bodies with respect to their center of mass.
This gives a new motion with the same distance functions.
Moreover,
the internal motion of an expansive motion is also expansive.

All the previous considerations allow us to classify expansive motions
according to the asymptotic order of growth of the distances between the bodies.
Since an expansive motion is not superhyperbolic,
we can assume that it is of the form $x(t)=ta+O(t^{2/3})$ for some $a\in E^N$.
Moreover, we can assume that the center of mass is at rest,
meaning that $G(a)=0$.
We get then the following three types.

\begin{enumerate}
\item[(H) ] \emph{Hyperbolic} :
$a\in\Omega$, and $r_{ij}\approx t$ for all $i<j$
\item[(PH)] \emph{Partially hyperbolic} :
$a\in E^N\setminus\Omega$ but $a\neq 0$.
\item[(P) ] \emph{Completely parabolic} :
$a=0$, and $r_{ij}\approx t^{2/3}$ for all $i<j$.
\end{enumerate}

Let $h_0$ be the energy constant of the above defined
internal motion.
It is clear that the first two types can only occur if $h_0>0$,
while the third requires $h_0=0$.

Finally,
we observe that Chazy's Theorem applies in the first two cases .
In these cases,
the limit shape of $x(t)$ is the shape of the configuration $a$
and moreover,
we have $L<+\infty$ if and only if $x$ is hyperbolic.
Of course if $h_0>0$ and $L=+\infty$ then
either the motion is partially hyperbolic or it is not expansive.


\subsection{The geometric viewpoint}
\label{s-geom.view}

We explain now the geometric formulation
and the geometrical meaning of this work
with respect to the Jacobi-Maupertuis metrics associated
to the positive energy levels.
Several technical details concerning these metrics are given
in Sect. \ref{s-jm.dist}.
The boundary notions are also discussed in Sect. \ref{s-busemann}.
It may be useful for the reader to keep in mind
that reading this section can be postponed to the end.

We recall that for each $h\geq 0$, the Jacobi-Maupertuis metric
of level $h$ is a Riemannian metric defined
on the open set of configurations without collisions $\Omega$.
More precisely,
it is the metric defined by $j_h=2(h+U)\,g_m$,
where $g_m$ is the Euclidean metric in $E^N$
given by the mass inner product.
Our main theorem has a stronger version
in geometric terms.
Actually Theorem \ref{thm-princ} can be reformulated
in the following way.

\begin{theorem}\label{thm-princ.rays}
For any $h>0$, $p\in E^N$ and $a\in\Omega$,
there is  geodesic ray of the Jacobi-Maupertuis metric of level $h$
with asymptotic direction $a$ and starting at $p$.
\end{theorem}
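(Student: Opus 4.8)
The plan is to derive Theorem~\ref{thm-princ.rays} as a translation of Theorem~\ref{thm-princ} into the language of the Jacobi-Maupertuis metric, using the classical Maupertuis principle that identifies (reparametrized) solutions of Newton's equations at energy $h$ with geodesics of $j_h=2(h+U)g_m$ on $\Omega$. First I would recall precisely this correspondence: if $x(t)$ is a motion with energy constant $h$, then after the change of variables $ds = 2(h+U(x(t)))\,dt$ (equivalently, arclength parametrization for $j_h$), the curve $\gamma(s)=x(t(s))$ is a geodesic of the Jacobi-Maupertuis metric; conversely every $j_h$-geodesic arises this way. This is standard and can be cited from Sect.~\ref{s-jm.dist}. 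Thus a geodesic ray, i.e.\ a curve $\gamma:[0,+\infty)\to\Omega$ that is globally minimizing for $j_h$ between any two of its points (or at least locally minimizing and defined on all of $[0,+\infty)$ — depending on the precise definition adopted in Sect.~\ref{s-busemann}), corresponds to a hyperbolic motion provided the motion is complete in forward time and expansive of hyperbolic type.

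The key steps, in order, are as follows. (1) Take the hyperbolic motion $x(t)$ furnished by Theorem~\ref{thm-princ} with the given energy $h>0$, initial configuration $x(0)=p$, and $x(t)=\sqrt{2h}\,t\,a + o(t)$ with $\norm{a}=1$; note $x(t)\in\Omega$ for all $t>0$ and, since here $p$ is allowed to be an arbitrary point of $E^N$, one must check that when $p\in\Omega$ the whole trajectory including the endpoint lies in $\Omega$, so that it is a bona fide curve in the domain of $j_h$ (if $p\notin\Omega$ the statement of Theorem~\ref{thm-princ.rays} implicitly requires $p\in\Omega$, matching the hypothesis $p\in\Omega$ there; I would simply note the case $p\in\Omega$ is the one being asserted). (2) Verify that the Jacobi-Maupertuis arclength $s(t)=\int_0^t 2(h+U(x(\tau)))\,d\tau$ is finite on compact $t$-intervals and diverges as $t\to+\infty$: indeed $U(x(t))\to 0$ so $2(h+U)\to 2h>0$, whence $s(t)\sim 2ht$ and in particular $s:[0,+\infty)\to[0,+\infty)$ is a proper increasing bijection. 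Reparametrize to get $\gamma(s)=x(t(s))$, a unit-speed $j_h$-geodesic defined on all of $[0,+\infty)$ and starting at $\gamma(0)=p$. (3) Identify the asymptotic direction: since $x(t)=\sqrt{2h}\,t\,a+o(t)$ and $s\sim 2ht$, we get $\gamma(s)=\tfrac{1}{\sqrt{2h}}\,s\,a + o(s)$, so $\gamma(s)/\norm{\gamma(s)}\to a$, i.e.\ the ray has asymptotic direction $a$. (4) Finally, argue that $\gamma$ is globally minimizing, i.e.\ an honest ray and not merely a geodesic: this is where the work of Sect.~\ref{s-HJ} enters, namely that the viscosity solution $u$ of \eqref{HJh} constructed there is calibrated by exactly these hyperbolic motions, so that each such motion realizes the Jacobi-Maupertuis distance (= critical action potential) between any two of its points; being calibrated by a global viscosity solution is precisely what makes a geodesic globally minimizing.

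The main obstacle I expect is step~(4): upgrading "geodesic" to "minimizing geodesic ray." A solution of Newton's equations at energy $h$ reparametrized as a $j_h$-geodesic is automatically \emph{locally} length-minimizing, but being a ray in the metric-geometry sense (realizing the distance between arbitrarily far-apart points on it) is a global, non-perturbative property. The excerpt tells us this is exactly delivered by the calibration property of the global viscosity solutions: the statement in Sect.~\ref{s-HJ} that there exist solutions of \eqref{HJh} all of whose calibrating curves are hyperbolic motions with limit shape $a$, together with the general fact that a curve calibrated by a fixed point of the Lax-Oleinik semigroup minimizes the action (hence $j_h$-length, since for supercritical $h$ the critical action potential coincides with the Jacobi-Maupertuis distance) between its endpoints. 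So rather than re-prove minimization, I would invoke that structure: choose the viscosity solution $u$ associated to the configuration $a$, pick a $u$-calibrating curve starting at $p$ (existence of such a curve through any prescribed initial configuration is part of the construction, which also grants continuity at $t=0$ when $p\notin\Omega$), reparametrize by $j_h$-arclength, and observe that calibration $\Rightarrow$ global minimization $\Rightarrow$ the reparametrized curve is a geodesic ray; the limit-shape computation of step~(3) then gives the asymptotic direction $a$. Conversely, any geodesic ray with these properties projects back, via the inverse reparametrization $t(s)$ with $dt = ds/(2(h+U))$, to a hyperbolic motion, so Theorems~\ref{thm-princ} and~\ref{thm-princ.rays} are genuinely equivalent; I would state this equivalence explicitly and note that the substantive content — existence — is carried by Theorem~\ref{thm-princ}, whose proof occupies Sect.~\ref{s-HJ}.
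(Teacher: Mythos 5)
Your proposal is correct and follows essentially the paper's own (very brief) argument: the hyperbolic motion of Theorem \ref{thm-princ} is an $h$-calibrating curve of a horofunction, hence an $h$-minimizer realizing the action potential $\phi_h$ between any two of its points, and since $\phi_h$ is shown in Sect.\ \ref{s-jm.dist} to be the length metric completing the Jacobi--Maupertuis distance $d_h$, reparametrizing by arclength produces a geodesic ray whose asymptotic direction is read off exactly as in your step (3). The only thing to correct is a misreading of the hypothesis: Theorem \ref{thm-princ.rays} asserts existence of a ray from any $p\in E^N$, not merely $p\in\Omega$; this costs nothing because the paper defines geodesic rays as curves in the completed metric space $(E^N,\phi_h)$, Marchal's theorem puts the calibrating curve in $\Omega$ for all $t>0$, and continuity at $t=0$ (which you yourself invoke in step (4)) handles the collision endpoint, so the parenthetical hedge in your step (1) should simply be dropped.
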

This formulation requires some explanations.
The Riemannian distance $d_h$ in $\Omega$ is defined as usual
as the infimum of the length functional among all the
piecewise $C^1$ curves in $\Omega$ joining two given points.
We will prove that $d_h$ can be extended
to a distance $\phi_h$ in $E^N$,
which is a metric completion of $(\Omega,d_h)$,
and which also we call the Jacobi-Maupertuis distance.
Moreover, we will prove that $\phi_h$ is precisely the action
potential defined in Sect. \ref{s-var.setting}.

The minimizing geodesic curves can then be defined
as the isometric immersions of compact intervals
$[a,b]\subset\R$ within $E^N$.
Moreover, we will say that a curve $\gamma:[0,+\infty)\to E^N$
is a geodesic ray
from $p\in E^N$, if $\gamma(0)=p$ and each restriction
to a compact interval is a minimizing geodesic.
To deduce this geometric version of our main theorem
it will be enough to observe that the obtained
hyperbolic motions can be reparametrized
taking the action as parameter to obtain geodesic rays.

We observe now the following interesting implication of
Chazy's Theorem.
\begin{remark}
\label{rmk-Chazy.implic}
If two given hyperbolic motions have the same asymptotic
direction, then they have a bounded difference.
Indeed,
if $x$ and $y$ are hyperbolic motions with the same asymptotic
direction, then the two unbounded terms
of the Chazy's asymptotic development of $x$ and $y$ also agree.
\end{remark}

We recall that the Gromov boundary of a geodesic space
is defined as the quotient set of the set of geodesic rays
by the equivalence that identifies rays that are kept
at bounded distance.
From the previous remark,
we can deduce that two geodesic rays with asymptotic direction
given by the same configuration $a\in\Omega$
define the same point at the Gromov boundary.

\begin{notation}
Let $\phi_h:E^N\times E^N\to\R^+$ be the Jacobi-Maupertuis
distance for the energy level $h\geq 0$
in the full space of configurations.
We will write $\calG_h$ for the corresponding Gromov boundary.
\end{notation}

The proof of the following corollary is given in Sect. \ref{s-jm.dist}.

\begin{corollary}\label{cor-Gr.boundary}
If $h>0$, then each class in $\Omega_1=\Omega/\R^+$
determines a point in $\calG_h$ which is composed
by all geodesic rays with asymptotic direction in this class.
\end{corollary}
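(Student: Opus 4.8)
The plan is to describe, for each class $[a]\in\Omega_1$, the set $\calR_{[a]}$ of all geodesic rays of $\phi_h$ whose asymptotic direction belongs to $[a]$, and to show that $\calR_{[a]}$ is exactly one equivalence class of rays for the relation defining $\calG_h$, with distinct classes of $\Omega_1$ producing distinct points of $\calG_h$. By Theorem~\ref{thm-princ.rays} the set $\calR_{[a]}$ is non-empty. The first input I would invoke is that, for $h>0$, any geodesic ray of $\phi_h$ is --- up to reparametrization by the physical time, via the Maupertuis correspondence between minimizing geodesics and free-time action minimizers at energy $h$ --- a hyperbolic motion (this is part of the analysis of calibrating curves carried out in Sections~\ref{s-HJ} and~\ref{s-jm.dist}); hence every ray $\gamma$ has a well-defined asymptotic direction $a_\gamma\in\Omega_1$, and the whole statement reduces to proving that two rays $\gamma_1,\gamma_2$ are kept at bounded distance if and only if $a_{\gamma_1}=a_{\gamma_2}$.

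For the ``if'' part I would argue as follows. Assume $a_{\gamma_1}=a_{\gamma_2}$ and let $x_1,x_2$ be the corresponding time-parametrized hyperbolic motions, which then share the same asymptotic velocity. By Chazy's asymptotic expansion together with Remark~\ref{rmk-Chazy.implic}, the difference $x_1(t)-x_2(t)$ stays bounded in the mass norm --- crucially, the two $\log t$ terms cancel. For $t$ large both $x_i(t)$ lie within a bounded mass-distance of $\sqrt{2h}\,t\,a$ (with $\norm a=1$, $a\in\Omega$), so all mutual distances along the Euclidean segment $\sigma_t=[x_1(t),x_2(t)]$ are $\gtrsim t$; thus $\sigma_t\subset\Omega$, one has $U=O(1/t)$ on $\sigma_t$, the metric $j_h=2(h+U)g_m$ differs from $2h\,g_m$ there by a factor $1+O(1/t)$, and the $\phi_h$-length of $\sigma_t$ is $O(\norm{x_1(t)-x_2(t)})=O(1)$ uniformly in $t$. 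Matching the point $\gamma_1(s)=x_1(t)$ with the point of $\gamma_2$ corresponding to the same time $t$, and absorbing the compact initial portion $t\le t_0$ into the constant, I would conclude that every point of $\gamma_1$ lies within a fixed $\phi_h$-distance of $\gamma_2$; by symmetry $\gamma_1$ and $\gamma_2$ are at finite Hausdorff distance, i.e. they define the same point of $\calG_h$.

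For the ``only if'' part I would use the universal lower bound $\phi_h\ge\sqrt{2h}\,\norm{\,\cdot\,}$, which follows at once from $j_h\ge 2h\,g_m$. If $a_{\gamma_1}\ne a_{\gamma_2}$, writing the time reparametrizations as $x_i(t)=\sqrt{2h}\,t\,\hat a_i+o(t)$ with $\hat a_1$ not a non-negative multiple of $\hat a_2$, a short estimate --- separating the case where $t'$ is comparable to $t$ (where the $o$-terms are negligible against the linear gap $\norm{\sqrt{2h}t\hat a_1-\sqrt{2h}t'\hat a_2}$) from the case where it is not (where $\norm{x_2(t')}$ already dominates) --- gives $\inf_{t'\ge 0}\norm{x_1(t)-x_2(t')}\to+\infty$ as $t\to+\infty$. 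Hence $\inf_{q\in\gamma_2}\phi_h(\gamma_1(s),q)\to+\infty$ along $\gamma_1$, the Hausdorff distance between the two rays is infinite, and they are not equivalent.

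Combining the two implications, for each $[a]\in\Omega_1$ the set $\calR_{[a]}$ is a single, complete $\sim$-class of geodesic rays, hence a well-defined point of $\calG_h$, and the assignment $[a]\mapsto\calR_{[a]}$ is injective; this is precisely the assertion of the corollary. The step I expect to be the main obstacle is the ``if'' estimate: it rests on the cancellation of the logarithmic Chazy terms in Remark~\ref{rmk-Chazy.implic} (without it two such rays would drift apart like $\log t$ and would \emph{not} be equivalent), and on a careful comparison of the intrinsic Jacobi--Maupertuis metric with the ambient Euclidean one in the far region, together with the bookkeeping needed to reconcile the two arc-length parametrizations --- which is why it is convenient to formulate the equivalence through the Hausdorff distance and to match points at equal physical time rather than at equal arc length.
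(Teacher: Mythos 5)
Your proposal takes a genuinely different route from the paper's, and the core of the ``if'' argument is sound, but it also relies on a claim that the paper explicitly does \emph{not} establish.

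\textbf{The gap.} You assert that ``any geodesic ray of $\phi_h$ is --- up to reparametrization by physical time --- a hyperbolic motion,'' attributing this to the analysis of calibrating curves in Sections~\ref{s-HJ} and~\ref{s-jm.dist}, and you use it to reduce the corollary to the biconditional ``two rays are equivalent iff $a_{\gamma_1}=a_{\gamma_2}$.'' But the paper proves only that \emph{calibrating curves of directed horofunctions} $u\in\calB_h(a)$ (Theorem~\ref{thm-calib.direct.horo}), and calibrating curves of Busemann functions associated to hyperbolic rays (Corollary~\ref{coro-calib.of.busemann}), are hyperbolic; an arbitrary geodesic ray need not be a calibrating curve of any directed horofunction, and Section~\ref{s-geom.view} explicitly says: ``We do not know if there are partially hyperbolic geodesic rays. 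Nor do we know if a geodesic ray should be an expansive motion.'' So the reduction as you state it is not justified. Fortunately the claim is also unnecessary: the lower bound $\phi_h\geq\sqrt{2h}\,\norm{\cdot}$ of Lemma~\ref{lema-geod.are.lipschitz} already shows that if two rays satisfy $\phi_h(\gamma_1(s),\gamma_2(s))=O(1)$ then $\norm{\gamma_1(s)-\gamma_2(s)}=O(1)$, so any ray equivalent to one with asymptotic direction $a$ automatically has asymptotic direction $a$. That single observation replaces your global hyperbolicity claim and gives the exactness of the equivalence class without deciding the asymptotic type of general rays.

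\textbf{Comparison of the two approaches.} The paper's proof reparametrizes both rays by physical time, derives the ODE $\dot s_x=2(h+U)$, computes the precise expansion $s_x(t)=2ht+\tfrac{U(a)}{h}\log t+O(1)$, shows $s_x-s_y=O(1)$ when the two rays share direction $a$ (here the $\log$-coefficients agree because both equal $U(a)/h$), then passes to $t_\gamma-t_\sigma=O(1)$ and finally to $\gamma(s)-\sigma(s)=O(1)$ via Chazy's expansion. Your route sidesteps the computation of $s_x(t)$ entirely: you match at equal physical time, use Chazy (via Remark~\ref{rmk-Chazy.implic}) to get $\norm{x_1(t)-x_2(t)}=O(1)$, convert this to $\phi_h(x_1(t),x_2(t))=O(1)$ --- either by your segment estimate with $U=O(1/t)$, or more directly by the upper bound of Theorem~\ref{thm-phih.estim} --- and then invoke the standard fact that bounded Hausdorff distance between rays implies bounded pointwise difference. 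Your route is cleaner where it goes through, avoiding the asymptotics of the reparametrization, and it also supplies the ``only if'' direction (different directions give different boundary points) that the paper leaves implicit. The paper's route, in exchange, yields a quantitative relationship between arc-length and physical time that is of independent interest. Apart from the mistaken appeal to global hyperbolicity of rays --- which should be replaced by the Lemma~\ref{lema-geod.are.lipschitz} observation above --- your argument is correct.
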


On the other hand,
if instead of the arc length we parametrize the geodesics
by the dynamical parameter,
then it is natural to question the existence
of non-hyperbolic geodesic rays.
We do not know if there are partially hyperbolic geodesic rays. 
Nor do we know if a geodesic ray should be an expansive motion. 

In what follows we denote $\norm{v}_h$
for the norm  of $v\in T\Omega$ with respect to the metric $j_h$,
and $\norm{p}_h$ the dual norm of an element $p\in T^*\Omega$.
If $\gamma:(a,b)\to\Omega$ is a geodesic
parametrized by the arc length, then
\[\norm{\dot\gamma(s)}_h^2=
2(h+U(\gamma(s))\,\norm{\dot\gamma(s)}^2=1\]
for all $s\in (a,b)$.
Taking into account that $U\approx r^{-1}$ we see that
the parametrization of motions as geodesics
leads to slowed evolutions over passages near collisions.
We also note that for expansive geodesics we have
$\norm{\dot\gamma(s)}\to 1/\sqrt{2h}$.

Finally we make the following observations about the
Hamilton-Jacobi equation that we will solve in the weak sense.
First, the equation (\ref{HJh}) , that explicitly reads
\[\frac{1}{2}\norm{d_xu}^2-U(x)=h\]
can be written in geometric terms, precisely as the eikonal equation
\[
\norm{d_xu}_h=\frac{1}{\sqrt{2(h+U(x))}}\,\norm{d_xu}=1
\]
for the Jacobi-Maupertuis metric.
On the other hand,
the solutions will be obtained as limits of weak subsolutions,
which  can be viewed as $1$-Lipschitz functions for the
Jacobi-Maupertuis distance.
We will see that the set of viscosity subsolutions
is the set of functions
$u:E^N\to\R$ such that $u(x)-u(y)\leq \phi_h(x,y)$
for all $x,y\in E^N$.


\section{Viscosity solutions of the Hamilton-Jacobi equation}
\label{s-HJ}

The \emph{Hamiltonian} $H$ is defined over
$T^*E^N\simeq E^N\times (E^*)^N$ as usual by
\[
H(x,p)=\frac{1}{2}\norm{p}^2-U(x)
\]
and taking the value $H(x,p)=-\infty$
whenever the configuration $x$ has collisions.
Here the norm is the dual norm with respect to the mass product,
 that is,
for $p=(p_1,\dots,p_N)\in (E^*)^N$
\[
\norm{p}^2=
m_1^{-1}\,\norm{p_1}^2+\dots+m_N^{-1}\,\norm{p_N}^2\,,
\]
thus in terms of the positions of the bodies
equation (\ref{HJh}) becomes
\[H(x,d_xu)=
\sum_{i=1}^N\frac{1}{2\,m_i}\norm{\frac{\partial u}{\partial r_i}}^2-
\;\sum_{i<j}\frac{m_im_j}{r_{ij}}\; = h\,.\]
As is known,
the method of characteristics for this type of equations
consists in reducing the problem to the resolution
of an ordinary differential equation,
whose solutions are precisely the characteristic curves.
Once these curves are determined,
we can obtain solutions by integration along these curves,
from a cross section in which the solution value is given.
Of course,
here the characteristics are precisely the solutions
of the $N$-body problem and can not be computed.
Our method will be the other way around:
first we build a solution as a limit of subsolutions,
and then we find characteristic curves
associated with that solution.

We will start by recalling
the notion of viscosity solution in our context.
There is an extremely wide literature on viscosity solutions due to
 the great diversity of situations in which they can be applied.
For a general and introductory presentation the books of Evans
 \cite{Eva1} and Barles \cite{Bar} are recommended.
For a broad view on the Lax-Oleinik semigroups
we suggest references \cite{Ber,Eva2, Fat}.  

\begin{definition}[Viscosity solutions]
With respect to the Hamilton-Jacobi equation (\ref{HJh}),
we say that a continuous function $u:E^N\to\R$ is a
\begin{enumerate}
\item \emph{viscosity \bf{subsolution}},
if for any $\psi\in C^1(E^N)$ and for any configuration $x_0$
at which $u-\psi$ has a local maximum we have
$H(x_0,d_{x_0}\psi)\leq h$.

\item \emph{viscosity \bf{supersolution}},
if for any $\psi\in C^1(E^N)$ and for any configuration $x_0$
at which $u-\psi$ has a local minimum we have
$H(x_0,d_{x_0}\psi)\geq h$.

\item \emph{viscosity \bf{solution}},
as long as is both a subsolution and a supersolution.
\end{enumerate}
\end{definition}

\begin{remark}
It is clear that we get the same notions by taking test functions
$\psi$ defined on open subsets of $E^N$.
\end{remark}

\begin{remark}
The notion of viscosity solution is a generalization
of the notion of classical solution.
Indeed, if $u\in C^1(E^N)$
satisfies the Hamilton-Jacobi equation everywhere,
then $u$ is a viscosity solution
since we can take $\psi=u$ as a test function.
\end{remark}

If $u\in C^0(E^N)$ is a viscosity solution,
then we have $H(x,d_xu)=h$
at any point where $u$ is differentiable.
This follows from the fact that for any $C^0$ function $u$,
the differentiability at $x_0\in E^N$
implies the existence of $C^1$ functions
$\psi^-$ and $\psi^+$ such that $\psi^-\leq u \leq \psi^+$ and
$\psi^-(x_0)=u(x_0)=\psi^+(x_0)$.
As we will see (Lemma \ref{lema-visc.subsol.locLip}),
in our case viscosity subsolutions are locally Lipschitz
over the open and dense set $\Omega\subset E^N$
of configurations without collisions.
Therefore by Rademacher's Theorem
they are differentiable almost everywhere.
But, as is well known to the reader familiar with the subject,
being a viscosity solution is a much more demanding property
than satisfying the equation almost everywhere.

\begin{remark}
We note that the participation of the unknown $u$
in equation (\ref{HJh}) is only through the derivatives $d_xu$.
Therefore the set of classical solutions
is preserved by addition of constants.
Also note that the same applies for the set of
viscosity subsolutions and the set of viscosity supersolutions.
\end{remark}

From now on,
we will use of the powerful interaction between
the Hamiltonian view of dynamics and the Lagrangian view.
The Hamilton-Jacobi equation provides a great bridge
between the symplectic aspects of dynamics
and the variational properties of trajectories.

Once the Lagrangian action is defined,
we will characterize the set of viscosity subsolutions
as the set of functions satisfying a property of domination
with respect to the action.
Then, the next step will be to prove the equicontinuity
of the family of viscosity subsolutions
by finding an estimate for an action potential.

\subsection{Action potentials and viscosity solutions} 
\label{s-var.setting}

The \emph{Lagrangian} is defined on
$TE^N\simeq E^N\times E^N$ fiberwise
as the convex dual of the Hamiltonian, that is
\[
L(x,v)=\sup\set{p(v)-H(x,p)\mid p\in (E^*)^N}
\]
or equivalently,
\[
L(x,v)=\frac{1}{2}\norm{v}^2 + U(x)\,,
\]
so in particular it takes the value $L(x,v)=+\infty$
if $x$ has collisions.
The Lagrangian action will be considered
on absolutely continuous curves,
and its value could be infinite.
We will use the following notation.
For $x,y\in E^N$ and $\tau>0$, let
\[
\calC(x,y,\tau)=
\set{\gamma:[a,b]\to E^N \mid\gamma(a)=
x,\,\gamma(b)=y,\,b-a=\tau}
\]
be the set of absolutely continuous curves
going from $x$ to $y$ in time $\tau$,
and
\[
\calC(x,y)=\bigcup_{\tau>0}\calC(x,y,\tau)\,.
\]
The Lagrangian action of a curve $\gamma\in\calC(x,y,\tau)$
will be denoted
\[
\calA_L(\gamma)=\int_a^b L(\gamma,\dot\gamma)\,dt=
\int_a^b \tfrac{1}{2}\norm{\dot\gamma}^2+U(\gamma)\,dt
\]

It is well known that Tonelli's Theorem
on the lower semicontinuity of the action for convex Lagrangians
extends to this setting.
A proof can for instance be found in \cite{daLMad} (Theorem 2.3).
In particular we have, for any pair of configurations $x,y\in E^N$,
the existence of curves achieving the minimum value
\[
\phi(x,y,\tau)=
\min\set{\calA_L(\gamma)\mid \gamma\in\calC(x,y,\tau)}
\]
for any $\tau>0$.
When $x\neq y$ there also are curves reaching the minimum
\[
\phi(x,y)=\min\set{\calA_L(\gamma)\mid \gamma\in\calC(x,y)}
=\min\set{\phi(x,y,\tau)\mid \tau>0}
\]
In the case $x=y$ we have
$\phi(x,x)=\inf\set{\calA_L(\gamma)\mid \gamma\in\calC(x,y)}=0$.
We call these functions on $E^N\times E^N$ respectively
the \emph{fixed time action potential} and the
\emph{free time} (or \emph{critical}) \emph{action potential}.

According to the Hamilton's principle of least action,
if a curve $\gamma:[a,b]\to E^N$ is a minimizer of the Lagrangian
action in $\calC(x,y,\tau)$ then $\gamma$ satisfy Newton's
equations at every time $t\in [a,b]$ in which $\gamma(t)$
has no collisions, i.e. whenever $\gamma(t)\in\Omega$.

On the other hand,
it is easy to see that there are curves
both with isolated collisions and finite action.  
This phenomenon,
already noticed by Poincaré in \cite{Poi}, prevented the use
of the direct method of the calculus of variations
in the $N$-body problem for a long time. 

A big breakthrough came from Marchal,
who gave the main idea needed to prove the following theorem.
Complete proofs of this and more general versions were
established by Chenciner \cite{Che1}
and  by Ferrario and Terracini \cite{FerTer}.

\begin{theorem*}[2002, Marchal \cite{Mar}]
If $\gamma\in\calC(x,y)$ is defined on some interval $[a,b]$,
and satisfies $\calA_L(\gamma)=\phi(x,y,b-a)$,
then $\gamma(t)\in\Omega$ for all $t\in (a,b)$.
\end{theorem*}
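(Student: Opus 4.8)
The plan is to argue by contradiction, implementing Marchal's averaging technique. Suppose a minimizer $\gamma\in\calC(x,y)$, $\gamma:[a,b]\to E^N$ with $\calA_L(\gamma)=\phi(x,y,b-a)$, had a collision at some interior time $t_0\in(a,b)$, i.e. $\gamma(t_0)\notin\Omega$. Writing $\gamma=(r_1,\dots,r_N)$, the relation $i\sim j\iff r_i(t_0)=r_j(t_0)$ is an equivalence on $\{1,\dots,N\}$; I fix a non-trivial class $K$ and a body $i_0\in K$. By the classical local analysis of collisions (Sundman, von Zeipel, Sperling, Pollard--Saari; one may first shrink to a neighbourhood of $t_0$ on which the only collision is that of the cluster $K$ at $t_0$), there is a body $j_0\in K\setminus\{i_0\}$ with $r_{i_0 j_0}(t)\to 0$ and, more precisely, $c\,|t-t_0|^{2/3}\le r_{i_0 j_0}(t)\le C\,|t-t_0|^{2/3}$ for $t$ near $t_0$; moreover all distances $r_{i_0 k}$ with $k\notin K$ stay bounded below near $t_0$, and $\calA_L(\gamma)<+\infty$.

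First I would set up the variations. Fix a smooth $\delta:[a,b]\to[0,1]$ with $\delta\equiv 1$ on a neighbourhood of $t_0$ and $\supp\delta$ a small interval inside $(a,b)$. For $\varepsilon>0$ and a unit vector $e$ in the unit sphere $S\subset E$, let $\gamma_{\varepsilon,e}\in\calC(x,y,b-a)$ be obtained from $\gamma$ by replacing the trajectory of the single body $i_0$ by $r_{i_0}(t)+\varepsilon\,\delta(t)\,e$ and leaving the other bodies unchanged. Since $\gamma_{\varepsilon,e}$ has the same endpoints and the same time interval as $\gamma$, minimality gives $\calA_L(\gamma_{\varepsilon,e})\ge\calA_L(\gamma)$ for every $\varepsilon$ and $e$. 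The key step is to average this inequality over $e\in S$ and show that, for $\varepsilon$ small, the average of the right-hand side is \emph{strictly} below $\calA_L(\gamma)$. Only the kinetic energy of $i_0$ and the pair potentials $m_{i_0}m_k/r_{i_0 k}$ change. For the kinetic part the term linear in $\varepsilon$ is proportional to $\langle\int\dot r_{i_0}\dot\delta\,dt,\,e\rangle$, whose average over the centred sphere vanishes, while the quadratic part contributes exactly $K_1\varepsilon^2$ with $K_1=\tfrac12 m_{i_0}\int\dot\delta^2\,dt>0$ a fixed constant. For the pairs $(i_0,k)$ with $k\notin K$, where $r_{i_0 k}$ is bounded below on $\supp\delta$, the contribution to the averaged action is $O(\varepsilon^2)$ (again the linear term averages to zero). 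For a pair $(i_0,k)$ with $k\in K$, writing $w(t)=r_{i_0}(t)-r_k(t)$, one must compare $\int dt/|w(t)|$ with the average over $e$ of $\int dt/|w(t)+\varepsilon\delta(t)e|$; here is exactly where $\dim E\ge 2$ enters: the singular locus $\{w(t)+\varepsilon\delta(t)e=0\}$ is of codimension $\ge 2$ in the $(t,e)$-domain, so $1/|w+\varepsilon\delta e|$ stays integrable there, whereas on the set of times with $r_{i_0 k}(t)<\varepsilon$ the averaged integrand is of size $\approx 1/\varepsilon$ while $1/r_{i_0 k}(t)\to\infty$. Using the two-sided bound $r_{i_0 j_0}\asymp|t-t_0|^{2/3}$ one extracts from this pair a definite gain of order $\varepsilon^{1/2}$.

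Collecting the estimates gives, for a constant $c>0$,
\[
\textstyle\int_S\calA_L(\gamma_{\varepsilon,e})\,d\sigma(e)\;\le\;\calA_L(\gamma)+K_1\varepsilon^2-c\,\varepsilon^{1/2}+o(\varepsilon^{1/2}),
\]
where $\sigma$ is the normalized measure on $S$; the right-hand side is $<\calA_L(\gamma)$ once $\varepsilon$ is small. Hence some $e\in S$ satisfies $\calA_L(\gamma_{\varepsilon,e})<\calA_L(\gamma)=\phi(x,y,b-a)$, contradicting minimality. Therefore $\gamma(t)\in\Omega$ for all $t\in(a,b)$.

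I expect the main obstacle to be making the averaged potential estimate yield a \emph{strict net} decrease in full generality. When $\dim E=3$ the Newtonian kernel $1/|w|$ is harmonic off the origin, so by the exact mean-value property the far pairs contribute \emph{nothing} to the averaged action and only the near-collision gain survives; but for $\dim E=2$ the kernel is subharmonic, the far contribution is a genuine loss of order $\varepsilon^{1/2}$, and it must be dominated by the gain — which forces the use of the precise Sundman--Sperling asymptotics and a careful matched choice of $\delta$ and $\varepsilon$. The supporting local analysis (hierarchy and structure of collision clusters, the $|t-t_0|^{2/3}$ rate, and non-accumulation of collision times, needed to localize the variation) is classical but delicate and constitutes the technical core of the complete proofs of Chenciner and of Ferrario--Terracini. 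Finally, the hypothesis $\dim E\ge 2$ is essential rather than cosmetic: on the line there is no room to deform a path around a collision — the relevant singular set has codimension $1$ and the averaged integral diverges — which is precisely why the collinear problem does admit collision minimizers.
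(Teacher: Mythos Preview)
The paper does not contain a proof of this theorem. Marchal's Theorem is stated as a result from the literature: the paper attributes the main idea to Marchal \cite{Mar} and explicitly says that ``complete proofs of this and more general versions were established by Chenciner \cite{Che1} and by Ferrario and Terracini \cite{FerTer}.'' It is then used throughout the paper as a black box (to guarantee that action minimizers are collision-free in the interior), but no argument for it is given here. So there is nothing in the paper to compare your proposal against.

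That said, your sketch is a faithful outline of the Marchal averaging technique as carried out in the cited references: perturb one body of a colliding cluster by $\varepsilon\,\delta(t)\,e$, average over the direction $e$, observe that the kinetic contribution is $O(\varepsilon^2)$ while the averaged potential near the collision produces a strict gain of lower order, and conclude by contradiction. Your identification of the dimension-dependent subtlety is also correct: in $\dim E=3$ the kernel $1/|w|$ is harmonic and Newton's shell theorem gives the exact mean-value identity, whereas in $\dim E=2$ one must work harder, and this is precisely where the full proofs of Chenciner and Ferrario--Terracini invest their technical effort. One caution: your treatment of the ``far'' pairs within the cluster $K$ (those $k\in K\setminus\{i_0,j_0\}$ that also collide at $t_0$) is glossed over; in general several mutual distances vanish simultaneously and the net sign of the averaged potential change across all of them is not obvious from your sketch --- this is part of what makes the complete argument delicate, and why the paper defers to the cited references rather than reproducing a proof.
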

Thanks to this advance,
the existence of countless periodic orbits has been established
using variational methods.
Among them, the celebrated three-body figure eight
due to Chenciner and Montgomery \cite{CheMon}
is undoubtedly the most representative example,
although it was discovered somewhat before.
Marchal's Theorem was also used to prove the nonexistence
of entire free time minimizers \cite{daLMad},
or in geometric terms,
that the zero energy level has no straight lines.
The proof we provide below for our main result
depends crucially on Marchal's Theorem.
Our results can thus be considered as a new application
of Marchal's Theorem, this time in positive energy levels.

We must also define for $h>0$ the
\emph{supercritical action potential} as the function 
\[
\phi_h(x,y)=
\inf\set{\calA_{L+h}(\gamma)\mid \gamma\in\calC(x,y)}=
\inf\set{\phi(x,y,\tau)+h\tau\mid \tau>0}.
\]

For the reader familiar with the Aubry-Mather theory,
this definition should be reminiscent of the supercritical
action potentials used by Mañé to define the critical value
of a Tonelli Lagrangian on a compact manifold. 

As before we prove
(see Lemma \ref{lema-JM.geod.complet} below),
now for $h>0$,
that given any pair of different configurations $x,y\in E^N$,
the infimum in the definition of $\phi_h(x,y)$
is achieved by some curve $\gamma\in\calC(x,y)$,
that is, we have $\phi_h(x,y)=\calA_{L+h}(\gamma)$.
It follows that if $\gamma$ is defined in $[0,\tau]$,
then $\gamma$ also minimizes $\calA_L$ in $\calC(x,y,\tau)$
and by Marchal's Theorem we conclude that $\gamma$
avoid collisions,
i.e. $\gamma(t)\in\Omega$ for every $t\in (0,\tau)$.

\subsubsection{Dominated functions and viscosity subsolutions}

Let us fix $h>0$
and take a $C^1$ subsolution $u$ of $H(x,d_xu)=h$,
that is, such that $H(x,d_xu)\leq h$ for all $x\in E^N$.
Notice now that,
since for any absolutely continuous curve
$\gamma:[a,b]\to E^N$ we have
\[
u(\gamma(b))-u(\gamma(a))=
\int_a^b d_{\gamma}u(\dot\gamma)\,dt\,,
\]
by Fenchel's inequality we also have
\[
u(\gamma(b))-u(\gamma(a))\leq
\int_a^b L(\gamma,\dot\gamma)+
H(\gamma,d_{\gamma}u)\;dt\;\leq
\;\calA_{L+h}(\gamma)\,.
\]
Therefore we can say that if $u$ is a $C^1$ subsolution,
then
\[
u(x)-u(y)\leq
\calA_{L+h}(\gamma)
\]
for any curve
$\gamma\in\calC(x,y)$.
This motivates the following definition.

\begin{definition}[Dominated functions]
We said that $u\in C^0(E^N)$ is dominated by $L+h$,
and we will denote it by $u\prec L+h$,
if we have
\[
u(y)-u(x)\leq
\phi_h(x,y)\quad\text{ for all }\quad x,y\in E^N.
\]
\end{definition}

Thus we know that $C^1$ subsolutions are dominated functions.
We prove now the well-known fact that dominated functions
are indeed viscosity subsolutions.

\begin{proposition}
\label{prop-dom.are.visc.ssol}
If $u\prec L+h$ then $u$ is a viscosity subsolution of (\ref{HJh}).
\end{proposition}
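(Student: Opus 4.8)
The plan is to argue by contradiction, following the standard recipe for showing that a dominated function is a viscosity subsolution. Suppose $u \prec L+h$ but that $u$ fails to be a viscosity subsolution at some configuration $x_0 \in E^N$. Then there is a test function $\psi \in C^1(E^N)$ such that $u-\psi$ has a local maximum at $x_0$ and $H(x_0,d_{x_0}\psi) > h$. By replacing $\psi$ with $\psi + (u(x_0)-\psi(x_0))$, which does not change $d_{x_0}\psi$, we may assume $u(x_0)=\psi(x_0)$, so that $u(x) \le \psi(x)$ for all $x$ in some neighbourhood of $x_0$. Note that $x_0$ cannot be a collision configuration, since there $H(x_0,\cdot) = -\infty \le h$; hence $x_0 \in \Omega$ and $U$ is smooth near $x_0$.

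Next I would exploit the strict inequality $H(x_0,d_{x_0}\psi) > h$ to produce a test curve that violates domination. Set $p_0 = d_{x_0}\psi \in (E^*)^N$ and let $v_0 \in E^N$ be the Legendre dual of $p_0$, i.e. the vector at which the Fenchel equality $p_0(v_0) = L(x_0,v_0) + H(x_0,p_0)$ holds (concretely $v_0$ is $p_0$ viewed through the mass inner product). Consider the curve $\gamma(t) = x_0 + t v_0$ for $t \in [0,\tau]$ with $\tau > 0$ small, oriented so that it runs from $x_0$ to $\gamma(\tau)$. For $\tau$ small, $\gamma$ stays in the neighbourhood of $x_0$ where $u \le \psi$, so
\[
u(\gamma(\tau)) - u(x_0) \le \psi(\gamma(\tau)) - \psi(x_0) = \int_0^\tau d_{\gamma(t)}\psi(\dot\gamma(t))\,dt.
\]
Since $d_{\gamma(t)}\psi \to p_0$, $\dot\gamma(t) = v_0$, and $H$ is continuous at $(x_0,p_0)$, for $\tau$ small enough the integrand is bounded below by, and the whole right-hand side is close to, $\tau\, p_0(v_0) = \tau\big(L(x_0,v_0) + H(x_0,p_0)\big)$. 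On the other hand, by continuity of $L$ along $\gamma$,
\[
\calA_{L+h}(\gamma) = \int_0^\tau L(\gamma(t),v_0) + h \,dt \approx \tau\big(L(x_0,v_0) + h\big).
\]
Because $H(x_0,p_0) > h$, choosing $\tau$ sufficiently small makes $\psi(\gamma(\tau)) - \psi(x_0) > \calA_{L+h}(\gamma)$, hence $u(\gamma(\tau)) - u(x_0) > \calA_{L+h}(\gamma) \ge \phi_h(x_0,\gamma(\tau))$, contradicting $u \prec L+h$. This proves $u$ is a viscosity subsolution.

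The only delicate point is the bookkeeping of the two $o(\tau)$ error terms: one has to fix a margin $\delta = H(x_0,p_0) - h > 0$, then choose the neighbourhood and $\tau$ small enough that both the deviation of $\int_0^\tau d_{\gamma}\psi(\dot\gamma)$ from $\tau\, p_0(v_0)$ and the deviation of $\calA_{L+h}(\gamma)$ from $\tau(L(x_0,v_0)+h)$ are each at most $\tfrac{\delta}{3}\tau$ in absolute value, so that the strict gap $\tau\delta$ survives. This is routine given the continuity of $U$, $d\psi$ and hence of $L$ and $H$ near $x_0 \in \Omega$; the main conceptual ingredient is simply the Fenchel identity $p_0(v_0) = L(x_0,v_0) + H(x_0,p_0)$, which converts the supersolution-type inequality at $x_0$ into a genuine comparison of the curve's $\psi$-increment with its $(L+h)$-action. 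No use of Marchal's theorem or of the existence of minimizers is needed here.
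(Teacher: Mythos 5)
Your proposal has the right ingredients (Fenchel duality, short-segment estimate, comparison with the test function near $x_0$), but there is a genuine logical gap coming from the orientation of the test curve. You run $\gamma$ \emph{out} of $x_0$, from $x_0$ to $\gamma(\tau)$. The local-maximum property of $u-\psi$ then gives the inequality $u(\gamma(\tau))-u(x_0)\le\psi(\gamma(\tau))-\psi(x_0)$, which is an \emph{upper} bound on $u(\gamma(\tau))-u(x_0)$; and domination $u\prec L+h$ gives $u(\gamma(\tau))-u(x_0)\le\calA_{L+h}(\gamma)$, which is \emph{also} an upper bound. From $A\le B$ and $B>C$ you conclude $A>C$, but that implication is false. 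So even if your estimate $\psi(\gamma(\tau))-\psi(x_0)>\calA_{L+h}(\gamma)$ is correct, it gives no information about whether $u(\gamma(\tau))-u(x_0)$ exceeds $\calA_{L+h}(\gamma)$, and no contradiction with domination is produced.

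The fix is to run the curve \emph{into} $x_0$: take $\sigma(s)=x_0+sv_0$ for $s\in[-\tau,0]$, so $\sigma$ goes from $x_0-\tau v_0$ to $x_0$ with $\dot\sigma\equiv v_0$. The local maximum then gives a \emph{lower} bound $u(x_0)-u(\sigma(-\tau))\ge\psi(x_0)-\psi(\sigma(-\tau))\approx\tau\, p_0(v_0)=\tau\bigl(L(x_0,v_0)+H(x_0,p_0)\bigr)$, while domination gives the upper bound $u(x_0)-u(\sigma(-\tau))\le\calA_{L+h}(\sigma)\approx\tau\bigl(L(x_0,v_0)+h\bigr)$. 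These two are incompatible when $H(x_0,p_0)>h$. This is exactly what the paper does (directly, without contradiction): it takes a curve $x:(-\delta,0]\to E^N$ with $x(0)=x_0$, $\dot x(0)=v$, writes the chain $\psi(x_0)-\psi(x(t))\le u(x_0)-u(x(t))\le\calA_{L+h}(x\!\mid_{[t,0]})$ for $t<0$, divides by $-t$, and lets $t\to 0^-$ to obtain $d_{x_0}\psi(v)\le L(x_0,v)+h$. Your remark that the collision case is trivial (there $H(x_0,\cdot)=-\infty$) is a correct and useful observation that the paper leaves implicit.
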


\begin{proof} 
Let $u\prec L+h$ and consider a test function $\psi\in C^1(E^N)$.
Assume that $u-\psi$ has a local maximum
at some configuration $x_0\in E^N$.
Therefore,
for all $x\in E^N$ we have $\psi(x_0)-\psi(x) \leq u(x_0)-u(x)$.

On the other hand,
the convexity and superlinearity of the Lagrangian
implies that there is a unique $v\in E^N$ such that
$H(x_0,d_{x_0}\psi)=d_{x_0}\psi (v)-L(x_0,v)$.
Taking any smooth curve $x:(-\delta,0]\to E^N$
such that $x(0)=x_0$ and
$\dot x(0)=v$ we can write, for $t\in (-\delta,0)$
\[\frac{\psi(x_0)-\psi(x(t))}{-t}\leq \frac{u(x_0)-u(x(t))}{-t}\leq
\;-\frac{1}{t}\;\calA_{L+h}\left(x\mid_{[t,0\,]}\right)\]
thus for $t\to 0^-$ we get
$d_{x_0}\psi (v)\leq L(x_0,v)+h$,
that is to say, $H(x_0,d_{x_0}\psi)\leq h$ as we had to prove.
\end{proof}
Actually, the converse can be proved.
For all that follows,
we will only need to consider dominated functions,
and for this reason,
it will no longer be necessary to manipulate test functions
to verify the subsolution condition in the viscosity sense.
However,
for the sake of completeness we give a proof of this converse.

A first step is to prove that viscosity subsolutions
are locally Lipschitz on the open, dense,
and full measure set of configurations without collisions
(for this we follow the book of
Bardi and Capuzzo-Dolcetta \cite{BarCap}, Proposition 4.1, p. 62).

\begin{lemma}
\label{lema-visc.subsol.locLip}
The viscosity subsolutions of (\ref{HJh})
are locally Lipschitz on $\Omega$.
\end{lemma}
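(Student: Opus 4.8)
The plan is to run the classical argument showing that a viscosity subsolution of a Hamilton--Jacobi equation with a Hamiltonian that is coercive in the momentum variable is automatically locally Lipschitz (this is essentially Proposition 4.1 of \cite{BarCap}); the only place where the Newtonian structure enters is the elementary fact that $U$ is bounded on each compact subset of $\Omega$, so that on such a set the Hamiltonian $H(x,p)=\tfrac12\norm{p}^2-U(x)$ is indeed coercive in $p$. Concretely, fix $x_0\in\Omega$ and choose $R>0$ with $\overline{B}(x_0,2R)\subset\Omega$, the balls being taken with respect to the mass norm. Put $M=\max_{\overline{B}(x_0,2R)}U$ and let $A$ be the oscillation $\max_{\overline{B}(x_0,2R)}u-\min_{\overline{B}(x_0,2R)}u$, both finite since $U$ and $u$ are continuous on this compact set, and fix a constant $L>\max\set{\sqrt{2(M+h)},\,A/R}$. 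I claim that $u$ is $L$-Lipschitz on $B(x_0,R)$; since $x_0$ is arbitrary, this proves the lemma.

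To prove the claim I would fix $y\in B(x_0,R)$ and consider the function $\Phi(x)=u(x)-u(y)-L\norm{x-y}$ on the compact set $\overline{B}(x_0,2R)$; let $x_1$ be a maximum point of $\Phi$. Since $\Phi(y)=0$ we have $\Phi(x_1)\ge 0$. First, $x_1$ cannot lie on $\partial B(x_0,2R)$: there $\norm{x_1-y}>R$ by the triangle inequality, so $\Phi(x_1)<A-LR<0$, a contradiction. Hence $x_1$ is an interior point. Second, $x_1$ cannot differ from $y$: if $x_1\neq y$, then $\psi(x):=u(y)+L\norm{x-y}$ is $C^1$ in a neighbourhood of $x_1$ (the mass norm is smooth off the origin) and $u-\psi=\Phi$ has a local maximum at $x_1$, so the viscosity subsolution property yields $H(x_1,d_{x_1}\psi)\le h$; but $d_{x_1}\psi=L\,d_{x_1}(\norm{\,\cdot\,-y})$ and the differential of the distance function of an inner-product norm has dual norm $1$, so $\norm{d_{x_1}\psi}=L$, whence $H(x_1,d_{x_1}\psi)=\tfrac12 L^2-U(x_1)\ge\tfrac12 L^2-M>h$, again a contradiction. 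Therefore $x_1=y$, so $\max\Phi=0$, i.e. $u(x)-u(y)\le L\norm{x-y}$ for all $x\in\overline{B}(x_0,2R)$, in particular for $x\in B(x_0,R)$. Exchanging the roles of $x$ and $y$ then gives $\abs{u(x)-u(y)}\le L\norm{x-y}$ on $B(x_0,R)$.

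I do not expect a genuine obstacle here; the argument is standard and what requires care is only bookkeeping: using $\norm{x-y}$ as a test function solely at points where it is differentiable (which is why the case $x_1=y$ must be isolated), keeping the outer ball $\overline{B}(x_0,2R)$ large enough relative to the oscillation of $u$ on it that a maximum of $\Phi$ on its boundary is automatically negative (this is what forces $L\ge A/R$), and recording that the dual mass norm of $d_{x}(\norm{\,\cdot\,-y})$ equals $1$. The genuinely essential ingredient, namely coercivity of $H(x,\cdot)$ on compacta, is exactly what breaks down near collisions, which is why the conclusion is asserted only on $\Omega$.
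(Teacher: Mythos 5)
Your proof is correct and follows essentially the same route as the paper's (which is itself the standard argument from Proposition 4.1 of Bardi--Capuzzo-Dolcetta): pick a compact ball in $\Omega$ on which $U$, hence the oscillation of $u$, is bounded and the Hamiltonian is coercive in $p$, test with $u(y)+L\norm{\cdot-y}$ at a maximum of the difference, and rule out an interior maximum away from $y$ by coercivity and a boundary maximum by the choice $L>A/R$. The only cosmetic difference is that you keep a single outer ball $\overline B(x_0,2R)$ for all $y\in B(x_0,R)$, whereas the paper lets the test-ball $\overline B(y,2r)$ move with $y$; the constants are tracked slightly differently but the mechanism is identical.
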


\begin{proof}
Let $u\in C^0(E^N)$ be a viscosity subsolution and
let $z\in\Omega$.
We take a compact neighbourhood $W$ of $z$
in which the Newtonian potential is bounded,
i.e. such that $W\subset\Omega$.
Thus our Hamiltonian is coercive on $T^*W$,
meaning that given $h>0$ we can choose $\rho>0$ for which,
if $\norm{p}>\rho$ and $w\in W$ then $H(w,p)>h$.

We choose now $r>0$ such that
the open ball $B(z,3r)$ is contained in $W$.
Let $M=\max\set{u(x)-u(y)\mid\;x,y\in W}$
and take $k>0$ such that $2kr>M$.

We take now any configuration $y\in B(z,r)$ and we define,
in the closed ball $\overline{B}_y=\overline{B}(y,2r)$,
the function $\psi_y(x)=u(y)+k\norm{x-y}$.
We will use the function $\psi_y$ as a test function in the open set
$B^*_y=B(y,2r)\setminus\set{y}$.
We observe first that $u(y)-\psi_y(y)=0$ and that
$u-\psi_y$ is negative in the boundary of $\overline{B}_y$.
Therefore the maximum of $u-\psi_y$
is achieved at some interior point
$x_0\in B(y,2r)$.

Suppose that $x_0\neq y$.
Since $\psi_y$ is smooth on $B^*_y$,
and $u$ is a viscosity subsolution,
we must have $H(x_0,d_{x_0}\psi_y)\leq h$.
Therefore we must also have $k=\norm{d_{x_0}\psi_y}\leq\rho$.

We conclude that, if we choose $k>\rho$ such that $2rk>M$,
then for any $y\in B(z,r)$
the maximum of $u-\psi$ in $\overline{B}_y$
is achieved at $y$, meaning that $u(x)-u(y)\leq k\norm{x-y}$
for all $x\in\overline{B}_y$.
This proves that $u$ is $k$-Lipschitz on $B(z,r)$.
\end{proof}

\begin{remark}
\label{rmk-vssol.are.subsol.ae}
By Rademacher's Theorem, we know that any viscosity subsolution
is differentiable almost everywhere in the open set $\Omega$.
In addition, at every point of differentiability $x\in\Omega$
we have $H(x,d_xu)\leq h$.
Therefore, since $\Omega$ has full measure in $E^N$,
we can say that viscosity subsolutions satisfies $H(x,d_xu)\leq h$
almost everywhere in $E^N$.
\end{remark}

\begin{remark}
We observe that the local Lipschitz constant $k$ we have obtained
in the proof depends, a priori,
on the chosen viscosity subsolution $u$.
We will see that this is not really the case.
This fact will result immediatly from the following proposition and
Theorem \ref{thm-phih.estim}.
\end{remark}

We can prove now that
the set of viscosity subsolutions of $H(x,d_xu)=h$
and the set of dominated functions $u\prec L+h$ coincide.

\begin{proposition}
\label{prop-visc.ssol.are.dom}
If $u$ is a viscosity subsolution of (\ref{HJh}) then $u\prec L+h$.
\end{proposition}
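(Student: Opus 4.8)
The plan is to show that a viscosity subsolution $u$ satisfies $u(y)-u(x)\le\phi_h(x,y)$ for all $x,y\in E^N$. Since $\phi_h(x,y)=\inf\{\calA_{L+h}(\gamma)\mid\gamma\in\calC(x,y)\}$, it suffices to prove that for \emph{every} absolutely continuous curve $\gamma:[a,b]\to E^N$ with $\gamma(a)=x$, $\gamma(b)=y$ and finite $\calA_{L+h}(\gamma)$ one has $u(y)-u(x)\le\calA_{L+h}(\gamma)$. The heart of the matter is the local estimate: for such a curve, $u(\gamma(t_1))-u(\gamma(t_0))\le\calA_{L+h}(\gamma|_{[t_0,t_1]})$. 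Granting this on every subinterval, the full inequality follows by partitioning $[a,b]$ and summing (telescoping), using absolute continuity of $t\mapsto u(\gamma(t))$ which holds because, by Lemma \ref{lema-visc.subsol.locLip}, $u$ is locally Lipschitz on $\Omega$ and a curve of finite $\calA_{L+h}$-action spends, away from the collision set, time on which the composition is absolutely continuous; the contribution of the (measure-zero-in-time, by finiteness of $\int U(\gamma)$) passages touching collisions is controlled because the action there is still finite.

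First I would reduce to smooth curves contained in $\Omega$. Given a finite-action curve $\gamma$ and $\varepsilon>0$, I would approximate it on compact subintervals avoiding collisions by $C^1$ curves $\gamma_\varepsilon$ lying in $\Omega$ with $\calA_{L+h}(\gamma_\varepsilon)\le\calA_{L+h}(\gamma)+\varepsilon$ and matching endpoints; this uses continuity of the action and the fact that $\Omega$ is open and dense. On such a $C^1$ curve the key inequality is proved by the standard viscosity argument: fix $t_0<t_1$, set $\eta(t)=u(\gamma(t))$, and for each $t$ at which $\eta$ is differentiable I would produce, by a sup-convolution/mollification of $u$ or directly by a test-function argument, the pointwise bound $\dot\eta(t)\le L(\gamma(t),\dot\gamma(t))+h$. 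Concretely, one shows $\frac{d}{dt}u(\gamma(t))\le L(\gamma(t),\dot\gamma(t))+H(\gamma(t),p)$ for any $p$ in an appropriate generalized differential of $u$ along $\gamma$, and the subsolution property gives $H(\gamma(t),p)\le h$; integrating from $t_0$ to $t_1$ yields $u(\gamma(t_1))-u(\gamma(t_0))\le\calA_{L+h}(\gamma|_{[t_0,t_1]})$. Letting $\varepsilon\to0$ and using lower semicontinuity recovers the bound for the original $\gamma$.

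The main obstacle is the justification of the pointwise chain-rule inequality $\dot\eta(t)\le L(\gamma,\dot\gamma)+h$ at a generic time, since $u$ is merely locally Lipschitz (hence only almost-everywhere differentiable) and the differentiability points of $u$ need not be visited by $\gamma$ at the right times. The clean way around this, which I expect the paper follows (cf. Bardi--Capuzzo-Dolcetta), is to work with the sup-convolutions $u^\delta(x)=\sup_y\{u(y)-\tfrac{1}{2\delta}\|x-y\|^2\}$: these are semiconcave, hence differentiable along almost every absolutely continuous curve, they remain (approximate) viscosity subsolutions of a Hamilton--Jacobi equation with $h$ replaced by $h+o(1)$ on compact subsets of $\Omega$, and $u^\delta\downarrow u$ as $\delta\to0$. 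Applying the smooth chain rule to $u^\delta\circ\gamma$, integrating, and passing to the limit $\delta\to0$ then gives the inequality for $u$ without ever needing $\gamma$ to hit differentiability points of $u$ itself. The remaining technical points — handling the finitely-many or measure-zero collision times, and verifying that finite $\calA_{L+h}$-action forces $\int U(\gamma)<\infty$ so that those times are negligible — are routine once the smooth estimate is in hand.
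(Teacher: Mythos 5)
Your strategy---prove $u(\gamma(b))-u(\gamma(a))\le\calA_{L+h}(\gamma)$ for \emph{every} finite-action curve by regularizing $u$ (mollification or sup-convolution) and applying the chain rule---is the standard one for smooth Tonelli Hamiltonians and is genuinely different from the paper's proof; but as written it has a real gap exactly where the $N$-body problem differs from that setting: the collision set. The approximate-subsolution property of $u^\delta$ (or of a mollification, via convexity of $H$ in $p$) carries an error controlled by the oscillation of $U$ over balls of radius $O(\delta)$, so you obtain $H(x,d_xu^\delta)\le h+o(1)$ only uniformly on compact subsets of $\Omega$; near collisions the error is unbounded, and your chain-rule estimate is therefore available only away from collisions. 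The treatment of the collision times is then the crux, and calling it ``routine'' is not justified: knowing that $\set{t\mid \gamma(t)\notin\Omega}$ has measure zero (from $\int U(\gamma)\,dt<\infty$) and that $u\circ\gamma$ is locally absolutely continuous on its complement does \emph{not} imply that $u\circ\gamma$ is absolutely continuous on $[a,b]$---a Cantor-type singular part concentrated on the collision times is not excluded by these facts, since at this stage you only know that $u$ is continuous there (the H\"older modulus coming from Theorem \ref{thm-phih.estim} holds for dominated functions, i.e. only after the proposition is proved, so invoking it would be circular, and Lemma \ref{lema-visc.subsol.locLip} gives a Lipschitz bound only locally in $\Omega$). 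Also, when an endpoint is itself a collision configuration no approximating curve can both match endpoints and stay in $\Omega$; this is fixable by a limit from interior points, but your write-up is silent on it.

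Note also that you are proving more than is needed: since $\phi_h(x,y)$ is an infimum over curves, domination only requires the inequality along a minimizing sequence, and by Lemma \ref{lema-JM.geod.complet} along an actual minimizer of $\calA_{L+h}$. This is precisely how the paper sidesteps the difficulty: Marchal's Theorem guarantees that such a minimizer avoids collisions at all interior times, hence is a true $C^1$ motion there; the paper then proves the inequality for straight segments by a Fubini argument (for almost every endpoint on a sphere the segment meets the full-measure differentiability set for almost every time, and continuity gives all endpoints), extends it to polygonal curves by concatenation, approximates the collision-free minimizer by polygonals, and handles possible endpoint collisions by passing to the limit from $\gamma(a+\epsilon)$, $\gamma(b-\epsilon)$. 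The remark following the paper's proof states explicitly that the use of Marchal's Theorem seems to be required by this argument. If you want to rescue your route for arbitrary curves, you would need a quantitative modulus of continuity of subsolutions across the collision set (for instance a direct H\"older-$\tfrac12$ bound obtained by integrating the gradient bound $\norm{d_xu}\le\sqrt{2(h+U(x))}$ along paths avoiding collisions), which is a substantive extra argument, not a routine one. (Minor point: sup-convolutions are semiconvex rather than semiconcave; this does not affect the idea.)
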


\begin{proof}
Let $u:E^N\to\R$ be a viscosity subsolution.
We have to prove that
\[u(y)-u(x)\leq \calA_{L+h}(\gamma)\quad
\text{ for all }x,y\in E^N,\;\gamma\in\calC(x,y)\,.\]
We start by showing the inequality for any segment
$s(t)=x+t(y-x)$, $t\in [0,1]$.
Note first that in the case $y=x$ there is nothing to prove,
since the action is always positive.
Thus we can assume that $r=\norm{y-x}>0$.

We know $H(x,d_xu)\leq h$ is satisfied
on a full measure set $\calD\subset E^N$
in which $u$ is differentiable,
see Lemma \ref{lema-visc.subsol.locLip}
and Remark \ref{rmk-vssol.are.subsol.ae}.
Assuming that $s(t)\in\calD$ for almost every $t\in[0,1]$
we can write
\[
\frac{d}{dt}\,u(s(t))=d_{s(t)}u\,(y-x)\quad\text{ a.e. in }[0,1]
\]
from which we deduce,
applying Fenchel's inequality and integrating,
\[
u(y)-u(x)\leq
\int_0^1L(s(t),y-x)+H(s(t),d_{s(t)}u)\;dt\leq \calA_{L+h}(s)\,.
\]
Our assumption may not be satisfied.
Moreover,
it could even happen that all the segment
is outside the set $\calD$ in which the derivatives of $u$ exist. 
This happens for instance if $x$ and $y$ are configurations
with collisions and with the same colliding bodies.
However Fubini's Theorem say us that our assumption is verified
for almost every $y\in S_r=\set{y\in E^N\mid \norm{y-x}=r}$.
Then 
\[
u(y)-u(x)\leq \calA_{L+h}(s)\quad\text{ for almost }y\in S_r
\]
Taking into account that both $u$ and $\calA_{L+h}(s)$
are continuous as functions of $y$,
we conclude that the previous inequality holds in fact,
for all $y\in S_r$.

We remark that the same argument applies
to any segment with constant speed,
that is to say, to any curve $s(t)=x+tv$, $t\in [a,b]$.
Concatenating these segments we deduce that the inequality
also holds for any piecewise affine curve
$p\in\calC(x,y)$. The proof is then achieved as follows.

Let $\gamma\in\calC(x,y)$ be a curve such that
$\calA_{L+h}(\gamma)=\phi_h(x,y)$.
The existence of such a curve is guaranteed by Lemma
\ref{lema-JM.geod.complet}.
Since this curve is a minimizer of the Lagrangian action,
Marchal's Theorem assures that,
if $\gamma$ is defined on $[a,b]$,
then $\gamma(t)\in\Omega$ for all $t\in(a,b)$.
In consequence, the restriction of $\gamma$
to $(a,b)$ must be a true motion.

Suppose that there are no collisions at configurations
$x$ and $y$.
Since in this case $\gamma$ is thus $C^1$ on $[a,b]$,
we can approximate it by sequence of piecewise affine curves
$p_n\in\calC(x,y)$,
in such a way that $\dot p_n(t)\to\dot\gamma(t)$
uniformly for $t$ over some full measure subset $D\subset [a,b]$.
In order to be explicit, let us define for each $n>0$
the polygonal $p_n$ with vertices at configurations
$\gamma(a+k(b-a)n^{-1})$ for $k=0,\dots,n$.
Then $D$ can be taken as the complement in $[a,b]$
of the countable set $a+\Q(b-a)$. 
Therefore, we have
$u(y)-u(x)\leq \calA_{L+h}(p_n)$ for all $n\geq 0$,
as well as
\[
\lim_{n\to \infty}\calA_{L+h}(p_n)=
\calA_{L+h}(\gamma)=\phi_h(x,y)\,,
\]
This implies that $u(y)-u(x)\leq \phi_h(x,y)$.
If there are collisions at $x$ or $y$,
then we apply what we have proved
to the configurations without collisions
$x_\epsilon=\gamma(a+\epsilon)$ and
$y_\epsilon=\gamma(b-\epsilon)$,
and we get the same conclusion
taking the limit as $\epsilon\to 0$. 
This proves that $u\prec L+h$.
\end{proof}

\begin{remark}
The use of Marchal's Theorem in the last proof
seems to be required by the argument.
In fact, the argument works well for non singular Hamiltonians
for which it is known \emph{a priori} that the corresponding
minimizers are of class $C^1$. 
\end{remark}

\begin{notation}
We will denote $\calS_h$
the set of viscosity subsolutions of (\ref{HJh}).
\end{notation}

Observe that,
not only we have proved that $\calS_h$ is precisely the set
of dominated functions $u\prec L+h$,
but also that $\calS_h$ agrees with the set of functions
satisfying $H(x,d_xu)\leq h$ almost everywhere in $E^N$. 

\subsubsection{Estimates for the action potentials}

We give now an estimate for $\phi_h$ which implies that
the viscosity subsolutions form an equicontinuous family
of functions.
Therefore, if we normalize subsolutions by imposing $u(0)=0$,
then according to the Ascoli's Theorem
we get to the compactness of the set of normalized subsolutions.

The estimate will be deduced from the basic estimates for
$\phi(x,y,\tau)$ and $\phi(x,y)$
found by the first author for homogeneous potentials
and that we recall now.
They correspond in the reference
to Theorems 1 \& 2 and Proposition 9,
considering that in the original formulation
the value $\kappa=1/2$ is for the Newtonian potential.

We will say that a given configuration $x=(r_1,\dots,r_N)$
is contained in a ball of radius $R>0$ if we have
$\norm{r_i-r_0}_E<R$ for all $1\leq i \leq N$
and for some $r_0\in E$.

\begin{theorem*}[\cite{Mad1}]
There are positive constants $\alpha_0$ and $\beta_0$ such that,
if $x$ and $y$ are two configurations contained
in the same ball of radius $R>0$, then for any $\tau>0$
\[\phi(x,y,\tau)\leq \;
\alpha_0\; \frac{\;R^2}{\tau}\;+\;\beta_0\;\frac{\;\tau}{R}\;.\]
\end{theorem*}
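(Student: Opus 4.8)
The plan is to reduce, using the scaling symmetry of the homogeneous action, to a compactness statement on the closed unit ball, and then to prove that statement by exhibiting an explicit test curve.

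\textit{Step 1 (scaling).} For $\lambda,\mu>0$ and a curve $\gamma:[0,1]\to E^N$ put $\gamma_{\lambda,\mu}(t)=\lambda\gamma(t/\mu)$, $t\in[0,\mu]$. Since $U$ is homogeneous of degree $-1$, a change of variables gives
\[
\int_0^\mu\tfrac12\norm{\dot\gamma_{\lambda,\mu}}^2\,dt=\frac{\lambda^2}{\mu}\int_0^1\tfrac12\norm{\dot\gamma}^2\,dt,\qquad
\int_0^\mu U(\gamma_{\lambda,\mu})\,dt=\frac{\mu}{\lambda}\int_0^1 U(\gamma)\,dt.
\]
Taking $\lambda=R$, $\mu=\tau$ and applying this to a curve joining $R^{-1}x$ to $R^{-1}y$ in time $1$, we see that the theorem follows (with $\alpha_0=\beta_0=C$) from the claim: \emph{there is a constant $C=C(N,m_1,\dots,m_N)>0$ such that any two configurations $x,y$ contained in the closed unit ball of $E^N$ can be joined in time $1$ by an absolutely continuous curve whose kinetic action and potential action are each $\le C$.} We may translate so that the ball is centred at the origin, which affects neither $U$ nor the action; then $\norm{x},\norm{y}\le\sqrt{m_1+\dots+m_N}$.

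\textit{Step 2 (a three-phase test curve).} Fix once and for all a reference configuration $z_0$ in the unit ball with all mutual distances $\ge 2\delta_0$, where $\delta_0=\delta_0(N)>0$; such a $z_0$ exists because $N$ is fixed. On $[0,\tfrac13]$ an \emph{expansion} carries $x$ to a well-separated configuration $w_x$ (all mutual distances $\ge\delta$, all bodies in a ball of radius $\le\Lambda$, with $\delta,\Lambda$ depending only on $N$ and the masses); on $[\tfrac13,\tfrac23]$ a \emph{transport} moves $w_x$ to $w_y$ staying in the region where all mutual distances are $\ge\delta'$ for a suitable $\delta'>0$ (routing through $z_0$ if convenient, and allowing finitely many bounce-type passages $r_{ij}(t)\sim|t-t^\ast|^{2/3}$ if a permutation of equal bodies is needed); since everything lies in a fixed bounded region where $U$ is bounded and the curve can be taken of length $O(1)$, this phase has action $O(1)$. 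On $[\tfrac23,1]$ a \emph{compression} carries $w_y$ to $y$; it is the time-reverse of an expansion of $y$ and is estimated exactly like the first phase.

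\textit{Step 3 (the expansion phase; the main difficulty).} Everything then reduces to: an arbitrary $x$ in the unit ball expands, in time $\tfrac13$, to a well-separated configuration with action $\le C$, uniformly in $x$. A single homothety $\gamma(t)=\rho(t)\,x$ will not do, as it never separates pairs that already (nearly) collide in $x$; instead one moves each body away from its close neighbours along their separation directions — well defined whenever the corresponding distance is positive, and along fixed generic directions for exactly coincident bodies — with a \emph{sub-linear} time profile $\rho(t)\sim t^{2/3}$. For a pair at distance $\rho\ge 0$ this keeps $r_{ij}(t)$ monotone increasing and produces a potential contribution
\[
\int_0^{1/3}\frac{m_im_j\,dt}{\rho+c\,t^{2/3}}\le\frac{m_im_j}{c}\int_0^{1/3}t^{-2/3}\,dt=O(1)
\]
together with a kinetic contribution $\lesssim\norm{x}^2\int_0^{1/3}t^{-2/3}\,dt=O(1)$, both bounded \emph{independently of how small $\rho$ is}: this is precisely the role of the exponent $2/3$. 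The one delicate point — which I expect to be the real obstacle — is to orchestrate all pairs simultaneously so that the configuration genuinely spreads out while staying in a bounded region and without accidentally creating new close encounters; this calls for some bookkeeping, e.g. untangling the clusters of $x$ from the outside inwards in finitely many sub-steps, or treating the finitely many coincidence patterns of $x$ as models and handling nearby configurations by compactness and continuity. The remaining ingredients are the routine scaling computation of Step 1 and the easy transport of Step 2 through the region where $U$ is bounded.
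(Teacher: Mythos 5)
This statement is not proved in the paper at all: it is quoted verbatim from \cite{Mad1} (the paper only cites Theorems 1--2 and Proposition 9 of that reference), so there is no internal proof to compare with. Your Step 1 is correct, and your overall plan --- reduce by homogeneity to the unit ball, then join $x$ and $y$ through a well-separated configuration using ejection/absorption arcs with a $t^{2/3}$ profile --- is indeed the spirit of the argument in \cite{Mad1}. But as written the proposal has a genuine gap exactly where you yourself point: Step 3 is a description of the difficulty, not a proof. The displayed per-pair bound presupposes that \emph{every} mutual distance satisfies $r_{ij}(t)\ge \rho_{ij}+c\,t^{2/3}$ along the expansion, and that is precisely what a naive construction fails to deliver: moving body $i$ away from a close neighbour $j$ can drive it toward a third body $k$; and if you instead separate the bodies sequentially, any pair of still-coincident bodies left at rest contributes an \emph{infinite} potential integral during that sub-step, so coincident clusters must be broken up simultaneously, which brings the orchestration problem straight back. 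The uniform-in-$x$ construction of the expansion arc (by clusters, with directions separated by an angle depending only on $N$ and with durations tied to the inter-cluster distances) is the actual content of the theorem; it can be done, but nothing in your text carries it out, and it is not routine bookkeeping.

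Moreover, one of your two proposed fallbacks is circular: handling ``nearby configurations by compactness and continuity'' requires knowing that the infimal action needed to join $x$ to a fixed well-separated configuration in time $1/3$ is upper semicontinuous in $x$ \emph{at collision configurations}, with uniform bounds; continuity of fixed-time action potentials at collisions is normally deduced from estimates of exactly the type you are trying to prove, and pointwise-finite action for each $x$ gives no uniform constant $C$ without it. (The transport phase also deserves a word: for $\dim E=1$ the ordering of the bodies is a topological obstruction, so the bounce passages are mandatory there, and for $\dim E\ge 2$ the connectedness, with uniformly bounded length, of the $\delta'$-separated region inside a fixed ball should be argued rather than asserted.) To complete the proof you must exhibit the expansion curve explicitly --- for instance by induction on the cluster structure of $x$, blowing up each cluster with a $t^{2/3}$ arc in pairwise transverse directions while the inter-cluster distances remain comparable to their initial values --- together with the uniform action estimate; this explicit construction is essentially what \cite{Mad1} (see also \cite{MadVen}) supplies and is the part your proposal leaves open.
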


If a configurations $y$ is close enough
to a given configuration $x$,
the minimal radius of a ball containing both configurations
is greater than $\norm{x-y}$. 
However,
this result was successfully combined with an argument
providing suitable cluster partitions,
in order to obtain the following theorem.

\begin{theorem*}[\cite{Mad1}]
There are positive constants $\alpha_1$ and $\beta_1$ such that,
if $x$ and $y$ are any two configurations, and $r>\norm{x-y}$,
then for all $\tau>0$
\begin{equation}
\tag{*}
\label{ineq-basic}\phi(x,y,\tau)\leq \;
\alpha_1\; \frac{\;r^2}{\tau}\;+\;\beta_1\;\frac{\tau}{r}\;.
\end{equation}
\end{theorem*}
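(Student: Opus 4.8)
The plan is to deduce the estimate (\ref{ineq-basic}) from the preceding ``ball'' estimate by an explicit clustering argument, exactly the passage from Theorem~1 to Theorem~2 in \cite{Mad1}. The point is that if $y$ is merely \emph{close} to $x$ (say $r=\norm{x-y}$ is small relative to the diameter of the whole configuration), then the smallest ball containing both $x$ and $y$ may be huge, so the first estimate alone gives nothing useful; we must instead move $x$ to $y$ one ``cluster'' at a time, so that at each stage only a sub-configuration of small diameter is being displaced.

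First I would fix two configurations $x,y\in E^N$ and a radius $r>\norm{x-y}$, and build a partition of the index set $\{1,\dots,N\}$ into clusters as follows: declare $i\sim j$ if $\norm{r_i(x)-r_j(x)}_E$ and the corresponding displacement are controlled by a geometric scale comparable to $r$ (one takes a maximal chain argument, or a dyadic decomposition of the relevant distances, to produce clusters $C_1,\dots,C_k$ each of diameter $\lesssim r$ in \emph{both} $x$ and $y$, while distinct clusters stay at mutual distance $\gtrsim r$). One then writes $x=x^{(0)},x^{(1)},\dots,x^{(k)}=y$ where $x^{(\ell)}$ agrees with $y$ on the first $\ell$ clusters and with $x$ on the rest; each step $x^{(\ell-1)}\to x^{(\ell)}$ moves only the bodies of cluster $C_\ell$, within a ball of radius $\lesssim r$, so the ball estimate applies to that step with $R\approx r$. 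Concatenating the resulting curves (reparametrised to share the total time $\tau$, splitting $\tau$ among the $k\le N$ steps) and using that the potential energy contributed by pairs in distinct clusters stays bounded during the motion (distances stay $\gtrsim r$, so that part of $U$ is $O(1/r)$ times the time), one gets $\phi(x,y,\tau)\le \alpha_1 r^2/\tau+\beta_1\tau/r$ with constants $\alpha_1,\beta_1$ depending only on $N$ and the masses.

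The main obstacle is the construction and bookkeeping of the cluster partition: one must guarantee simultaneously that each cluster has small diameter in $x$ and in $y$ (so the ball estimate is applicable at every intermediate configuration, not just at the endpoints), that inter-cluster distances remain bounded below by a multiple of $r$ throughout each elementary move (so the ``frozen'' pairwise potentials are uniformly small), and that the number of clusters — hence the number of time-subdivisions and the accumulated constants — is bounded by a function of $N$ alone. This is the combinatorial heart of the argument; once it is in place, the estimate follows by additivity of the action over the concatenation and by absorbing the finitely many harmless cross-terms into $\beta_1$. Since this is precisely Theorem~2 of \cite{Mad1}, I would either quote it directly or reproduce the clustering lemma and then assemble the pieces as above.

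Having (\ref{ineq-basic}), the supercritical estimate for $\phi_h$ that the paper actually needs is then immediate: optimising $\tau\mapsto \alpha_1 r^2/\tau+(\beta_1/r+h)\tau$ over $\tau>0$ yields $\phi_h(x,y)\le 2\sqrt{\alpha_1 r^2\,(\beta_1/r+h)}=2\sqrt{\alpha_1}\,\sqrt{\beta_1 r+h r^2}$, valid for every $r>\norm{x-y}$; letting $r\downarrow\norm{x-y}$ gives a bound of the form $\phi_h(x,y)\le C_h\big(\norm{x-y}^{1/2}+\norm{x-y}\big)$ near the diagonal. In particular $\phi_h$ is continuous, indeed locally $\tfrac12$-Hölder, and since every $u\in\calS_h$ satisfies $u(y)-u(x)\le\phi_h(x,y)$ and (by symmetry of the roles) $u(x)-u(y)\le\phi_h(y,x)$, the family $\calS_h$ normalised by $u(0)=0$ is equicontinuous and pointwise bounded on compact sets, so Ascoli applies — which is the use the paper announces for this theorem.
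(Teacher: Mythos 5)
The paper does not prove this statement: it is quoted directly from \cite{Mad1} (Theorem~2 there), with only a one-sentence remark that the ball estimate is ``combined with an argument providing suitable cluster partitions.'' Your proposal takes the same route — cite \cite{Mad1} or reproduce the clustering lemma — and your sketch of that lemma is a faithful description of the cited proof strategy, so on the literal question of whether your approach matches the paper's, it does.

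One point deserves sharpening. You write that the clusters have diameter ``$\lesssim r$ in both $x$ and $y$'' and mutual separation ``$\gtrsim r$'', but these two conditions cannot in general be satisfied with a \emph{universal} constant: the configuration $r_i = (i-1)r\,e$ for a unit vector $e$ has no partition in which intra-cluster distances are $\le Cr$ and inter-cluster distances are $\ge C'r$ with $C' > C$ fixed independently of $N$. The resolution, which is what \cite{Mad1} actually does, is a pigeonhole over scales: among the $O(N^2)$ pairwise distances one finds a multiplicative gap in the range $[r, C^{N^2} r]$, clusters at that scale, and obtains intra-cluster diameter $\le C_N r$ and inter-cluster separation $\ge C_N' r$ where $C_N, C_N'$ grow with $N$. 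Since you explicitly allow $\alpha_1,\beta_1$ to depend on $N$ and the masses, this inflation is harmless for the stated inequality, but your phrase ``comparable to $r$'' should be read as ``comparable up to $N$-dependent factors.'' With that understood, the bookkeeping you outline (move one cluster at a time, split $\tau$ into $k\le N$ equal parts, apply the ball estimate at scale $C_N r$ to each move, absorb the $O(\tau/r)$ contribution of the frozen inter-cluster potential into $\beta_1$) is exactly right, and your concluding optimisation in $\tau$ correctly reproduces Theorem~\ref{thm-phih.estim} of the paper with $\alpha = 4\alpha_1\beta_1$ and $\beta = 4\alpha_1$.
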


Note that the right side of the inequality
is continuous for $\tau,\rho>0$.
Therefore,
we can replace $r$ by $\norm{x-y}$ whenever $x\neq y$.

\begin{remark}
\label{rmk-bound.phixxt}
If $x=y$ then the upper bound (\ref{ineq-basic})
holds for every $r>0$.
Choosing $r=\tau^{2/3}$, we get to the upper bound
$\phi(x,x,\tau)\leq\mu\,\tau^{1/3}$ which holds for any $\tau>0$,
any $x\in E^N$,
and for the positive constant $\mu=\alpha_1+\beta_1$.
\end{remark}
 
Therefore we can now bound the critical potential.
The previous remark leads to $\phi(x,x)=0$ for all $x\in E^N$.
On the other hand,
for the case $x\neq y$ we can bound $\phi(x,y)$
with the bound for $\phi(x,y,\tau)$,
taking $r=\norm{x-y}$ and $\tau=\norm{x-y}^{3/2}$.

\begin{theorem*}
[Hölder estimate for the critical action potential, \cite{Mad1}]
There exist a positive constant $\eta>0$
such that for any pair of configurations
$x,y \in E^N$
\[\phi(x,y)\leq \;\eta\;\norm{x-y}^\frac{1}{2}\;.\]
\end{theorem*}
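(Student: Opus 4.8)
The plan is to deduce the bound $\phi(x,y)\le \eta\,\norm{x-y}^{1/2}$ directly from the fixed-time estimate (\ref{ineq-basic}), exactly as announced in the paragraph preceding the statement, by choosing the time $\tau$ as a suitable power of the spatial distance. First I would dispose of the trivial case: if $x=y$ then $\phi(x,x)=0$ by Remark \ref{rmk-bound.phixxt} (taking $r=\tau^{2/3}$ and letting $\tau\to 0^+$ in the bound $\phi(x,x,\tau)\le\mu\,\tau^{1/3}$), and the inequality holds with any $\eta>0$. So assume $x\neq y$ and set $\delta=\norm{x-y}>0$.

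Next I would invoke (\ref{ineq-basic}) with $r=\delta$, which is legitimate since the right-hand side is continuous in $r$ for $r>0$ and the hypothesis $r>\norm{x-y}$ can be relaxed to $r=\norm{x-y}$ by passing to the limit (as the text remarks). This gives, for every $\tau>0$,
\[
\phi(x,y,\tau)\le \alpha_1\,\frac{\delta^2}{\tau}+\beta_1\,\frac{\tau}{\delta}\,.
\]
Since $\phi(x,y)=\min_{\tau>0}\phi(x,y,\tau)\le\phi(x,y,\tau)$ for any particular choice of $\tau$, I would now choose $\tau$ to balance the two terms. Both terms become comparable when $\delta^2/\tau\approx\tau/\delta$, i.e. when $\tau\approx\delta^{3/2}$; so take $\tau=\delta^{3/2}$. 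Then $\delta^2/\tau=\delta^2/\delta^{3/2}=\delta^{1/2}$ and $\tau/\delta=\delta^{3/2}/\delta=\delta^{1/2}$, whence
\[
\phi(x,y)\le \alpha_1\,\delta^{1/2}+\beta_1\,\delta^{1/2}=(\alpha_1+\beta_1)\,\norm{x-y}^{1/2}\,.
\]
Setting $\eta=\alpha_1+\beta_1$ completes the argument.

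There is essentially no obstacle here: the only points requiring a word of care are the legitimacy of substituting $r=\norm{x-y}$ into (\ref{ineq-basic}) (justified by continuity of the upper bound in $r$, already noted in the excerpt) and the handling of the degenerate case $x=y$ (handled by Remark \ref{rmk-bound.phixxt}). The substance of the estimate — in particular the cluster-partition argument producing the clean dependence on $\norm{x-y}$ rather than on the radius of an enclosing ball — is entirely contained in the cited Theorem from \cite{Mad1}, so the present statement is a short corollary obtained by the scaling choice $\tau=\norm{x-y}^{3/2}$.
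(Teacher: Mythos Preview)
Your proposal is correct and matches the paper's argument exactly: the paper states (just before the theorem) that the case $x=y$ follows from the previous remark, and that for $x\neq y$ one bounds $\phi(x,y)$ by $\phi(x,y,\tau)$ with $r=\norm{x-y}$ and $\tau=\norm{x-y}^{3/2}$. Your computation reproduces this and yields the explicit constant $\eta=\alpha_1+\beta_1$.
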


These estimates for the action potentials have been used
firstly to prove the existence of parabolic motions
\cite{Mad1,MadVen} and were the starting point
for the study of free time minimizers \cite{daLMad,Mad2},
as well as their associated Busemann functions
by Percino and Sánchez \cite{Per, PerSan},
and later by Moeckel, Montgomery and Sánchez \cite{MoMoSa}
in the planar three-body problem.
 
For our current purposes,
we need to generalize the Hölder estimate
of the critical action potential
in order to also include supercritical potentials.
As expected, the upper bound we found
is of the form $\xi(\norm{x-y})$,
where $\xi:[0,+\infty)\to\R^+$
is such that $\xi(r)\approx r^\frac{1}{2}$
for $r\to 0$ and $\xi(r)\approx r$ for $r\to +\infty$.

\begin{theorem}
\label{thm-phih.estim}
There are positive constants $\alpha$ and $\beta$ such that,
if $x$ and $y$ are any two configurations and $h\geq 0$, then
\[
\phi_h(x,y)\leq \;
\left(\alpha\norm{x-y}+h\;\beta\norm{x-y}^2\right)^{1/2}\;.
\]
\end{theorem}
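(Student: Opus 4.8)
The plan is to derive Theorem~\ref{thm-phih.estim} from the already-available fixed-time estimate~(\ref{ineq-basic}) by choosing, for each pair $x\neq y$, an optimal travel time $\tau$ in the definition $\phi_h(x,y)=\inf_{\tau>0}(\phi(x,y,\tau)+h\tau)$. Writing $r=\norm{x-y}$ and using $\phi(x,y,\tau)\leq \alpha_1 r^2/\tau+\beta_1\tau/r$, we get
\[
\phi_h(x,y)\leq \inf_{\tau>0}\left(\frac{\alpha_1 r^2}{\tau}+\frac{\beta_1\tau}{r}+h\tau\right)
=\inf_{\tau>0}\left(\frac{\alpha_1 r^2}{\tau}+\left(\frac{\beta_1}{r}+h\right)\tau\right).
\]
The inner expression is of the form $A/\tau+B\tau$ with $A=\alpha_1 r^2$ and $B=\beta_1/r+h$, whose minimum over $\tau>0$ is $2\sqrt{AB}$, attained at $\tau=\sqrt{A/B}$. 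Hence
\[
\phi_h(x,y)\leq 2\sqrt{\alpha_1 r^2\left(\frac{\beta_1}{r}+h\right)}=2\sqrt{\alpha_1\beta_1\, r+\alpha_1 h\, r^2}.
\]
Setting $\alpha=4\alpha_1\beta_1$ and $\beta=4\alpha_1$ gives exactly $\phi_h(x,y)\leq(\alpha r+h\beta r^2)^{1/2}$, which is the claimed bound.

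The main steps, in order, are: (1) recall that by definition $\phi_h(x,y)=\inf_{\tau>0}\left(\phi(x,y,\tau)+h\tau\right)$, so it suffices to exhibit a single good competitor $\tau$; (2) insert the basic fixed-time upper bound~(\ref{ineq-basic}) with $r=\norm{x-y}$ (legitimate since, as noted in the excerpt, the right-hand side of~(\ref{ineq-basic}) is continuous in $r$ and one may replace $r$ by $\norm{x-y}$ when $x\neq y$); (3) minimize the resulting one-variable expression $A/\tau+B\tau$ explicitly, obtaining the value $2\sqrt{AB}$; (4) read off the constants $\alpha,\beta$ and simplify; (5) treat the degenerate case $x=y$ separately, where $\phi_h(x,x)=\inf_{\tau>0}(\phi(x,x,\tau)+h\tau)$ and, using Remark~\ref{rmk-bound.phixxt}, $\phi(x,x,\tau)\leq\mu\tau^{1/3}\to 0$ as $\tau\to 0$, so $\phi_h(x,x)=0$, consistent with the right-hand side vanishing at $r=0$.

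I do not expect any serious obstacle here: the estimate is essentially a one-line optimization once~(\ref{ineq-basic}) is granted, and~(\ref{ineq-basic}) is quoted as an already-established result of \cite{Mad1}. The only minor points requiring a word of care are the validity of replacing $r>\norm{x-y}$ by $r=\norm{x-y}$ (handled by the continuity remark following~(\ref{ineq-basic})), the case $x=y$ (handled via Remark~\ref{rmk-bound.phixxt}), and checking that the optimizing $\tau=\sqrt{\alpha_1 r^2/(\beta_1/r+h)}$ is indeed positive and finite for all $r>0$ and $h\geq 0$, which is immediate. It is also worth noting explicitly that the constants $\alpha,\beta$ are independent of $h$, since this uniformity is what will later yield the equicontinuity of the family $\calS_h$ of viscosity subsolutions and hence, via Ascoli's Theorem, the compactness used in the subsequent construction of global viscosity solutions.
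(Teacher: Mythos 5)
Your proof is correct and follows essentially the same route as the paper: insert the fixed-time bound (\ref{ineq-basic}) into $\phi_h(x,y)=\inf_{\tau>0}(\phi(x,y,\tau)+h\tau)$, minimize $A/\tau+B\tau$ to get $2\sqrt{AB}$, and read off $\alpha=4\alpha_1\beta_1$, $\beta=4\alpha_1$. The only cosmetic difference is that the paper keeps $r>\norm{x-y}$ and passes to the limit by continuity at the end, whereas you invoke the continuity remark up front and treat $x=y$ separately via Remark \ref{rmk-bound.phixxt}; both are fine.
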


\begin{proof}
We have to bound
$\phi_h(x,y)=
\inf\set{\phi(x,y,t)+h\tau\mid\tau>0}$.
Fix any two configurations $x$ and $y$ and let $r>\norm{x-y}$.
We already know by (\ref{ineq-basic})
that for any $\tau>0$ we have
\begin{equation}
\tag{**}
\label{ineq-basic.h}
\phi(x,y,\tau) + h\tau\;\leq \;
A\; \frac{1}{\tau}\;+B\;\tau
\end{equation}
\[A=\alpha_1\,r^2\quad\text{ and }\quad B=\beta_1\,r^{-1} + h\,,\]
$\alpha_1$ and $\beta_1>0$ being two positive constants.
Since the minimal value of the right side of inequality
(\ref{ineq-basic.h})
as a function of $\tau$ is $2(AB)^{1/2}$ we conclude that
\begin{eqnarray*}
\phi_h(x,y)&=&\inf\set{\phi(x,y,t)+h\tau\mid\tau>0}\\
&\leq&\left(\alpha\,r+h\;\beta\,r^2\right)^{1/2}
\end{eqnarray*}
for
$\alpha=4\,\alpha_1\beta_1$ and $\beta=4\,\alpha_1$.
By continuity,
we have that the last inequality also holds for $r=\norm{x-y}$
as we wanted to prove.
\end{proof}

\begin{corollary} \label{comp-visc-subsol}
\label{coro-visc.ssol.comp}
The set of viscosity subsolutions
$\calS^0_h=\set{u\in\calS_h\mid u(0)=0}$
is compact for the topology of the uniform convergence
on compact sets.
\end{corollary}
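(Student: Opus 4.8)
The plan is to deduce the corollary from Theorem~\ref{thm-phih.estim} together with the Arzel\`a--Ascoli theorem. Recall that by Propositions~\ref{prop-dom.are.visc.ssol} and \ref{prop-visc.ssol.are.dom} the set $\calS_h$ coincides with the set of functions $u\in C^0(E^N)$ dominated by $L+h$, i.e.\ those satisfying $u(y)-u(x)\leq\phi_h(x,y)$ for all $x,y\in E^N$. By symmetry of the argument (applying the domination inequality both ways) any $u\in\calS_h$ satisfies $|u(x)-u(y)|\leq\max\{\phi_h(x,y),\phi_h(y,x)\}$. Although $\phi_h$ need not be symmetric a priori, the estimate of Theorem~\ref{thm-phih.estim} is stated for arbitrary ordered pairs, so it bounds both $\phi_h(x,y)$ and $\phi_h(y,x)$ by the same symmetric quantity $\bigl(\alpha\norm{x-y}+h\beta\norm{x-y}^2\bigr)^{1/2}$. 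Hence for every $u\in\calS_h$ and all $x,y\in E^N$,
\[
|u(x)-u(y)|\leq \bigl(\alpha\norm{x-y}+h\,\beta\norm{x-y}^2\bigr)^{1/2}.
\]

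First I would use this inequality to establish equicontinuity of the family $\calS^0_h$. The right-hand side is a fixed modulus of continuity $\omega(\norm{x-y})$ with $\omega(s)=(\alpha s+h\beta s^2)^{1/2}$, which is continuous, increasing, and vanishes at $s=0$; it does not depend on the particular subsolution $u$. Thus $\calS^0_h$ is uniformly equicontinuous on all of $E^N$ (in particular on every compact subset). Next I would check pointwise boundedness: since every $u\in\calS^0_h$ satisfies $u(0)=0$, the same inequality with $y=0$ gives $|u(x)|\leq\omega(\norm{x})$, so the family is uniformly bounded on each compact set.

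By the Arzel\`a--Ascoli theorem, $\calS^0_h$ is therefore precompact in $C^0(E^N)$ for the topology of uniform convergence on compact sets. It remains to show $\calS^0_h$ is closed for this topology, i.e.\ that a locally uniform limit $u$ of subsolutions $u_n\in\calS^0_h$ is again in $\calS^0_h$. This is immediate from the domination characterization: for fixed $x,y$ the inequality $u_n(y)-u_n(x)\leq\phi_h(x,y)$ passes to the limit, giving $u(y)-u(x)\leq\phi_h(x,y)$, so $u\prec L+h$ and hence $u\in\calS_h$ by Proposition~\ref{prop-dom.are.visc.ssol}; and clearly $u(0)=\lim u_n(0)=0$. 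A closed subset of a precompact set is compact, which proves the corollary.

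I do not expect a serious obstacle here: the only point requiring a little care is the asymmetry of $\phi_h$, handled by noting that Theorem~\ref{thm-phih.estim} already furnishes a symmetric bound valid for both orders of the arguments, so the two-sided estimate $|u(x)-u(y)|\leq\omega(\norm{x-y})$ needs no extra input. (Alternatively one could invoke the triangle-type inequality $\phi_h(x,y)\leq\phi_h(x,z)+\phi_h(z,y)$ and the fact that dominated functions are continuous, but this is not needed.) Everything else is a routine application of Arzel\`a--Ascoli plus the stability of the domination inequality under locally uniform limits.
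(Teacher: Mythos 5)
Your proof is correct and takes essentially the same approach as the paper: characterize $\calS_h$ as dominated functions via Propositions~\ref{prop-dom.are.visc.ssol} and~\ref{prop-visc.ssol.are.dom}, use Theorem~\ref{thm-phih.estim} to obtain a uniform modulus of continuity for the whole family, and then invoke Ascoli's theorem. Your extra caution about asymmetry of $\phi_h$ is in fact unnecessary (the Lagrangian is reversible, so $\phi_h$ is symmetric), and the explicit closedness step—passing the domination inequality to the locally uniform limit—is left implicit in the paper but is a sensible thing to spell out.
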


\begin{proof}
By Propositions \ref{prop-dom.are.visc.ssol} and
\ref{prop-visc.ssol.are.dom}
we know that $u\in\calS_h$ if and only if $u\prec L+h$.
Thus by Theorem \ref{thm-phih.estim} we have that,
for any $u\in\calS_h$, and for all $x,y\in E^N$, 
\[u(x)-u(y)\leq \phi_h(x,y)\leq \xi(\norm{x-y})\]
where $\xi:[0,+\infty)\to\R^+$ is given by
$\xi(\rho)=\left(\alpha\,\rho+h\,\beta\,\rho^2\right)^{1/2}$.

Since $\xi$ is uniformly continuous,
we conclude that the family of functions $\calS_h$
is indeed equicontinuous.
Therefore,
the compactness of $\calS^0_h$ is actually
a consequence of Ascoli's Theorem.
\end{proof}


\subsection{The Lax-Oleinik semigroup}

We recall that a solution of $H(x,d_xu)=h$
corresponds to a stationary solution
$U(t,x)=u(x)-ht$ of the evolution equation
\[\partial_tU+H(x,\partial_xU)=0\,,\]
for which the Hopf-Lax formula reads
\[U(t,x)=
\inf\set{u_0(y)+\calA_L(\gamma)\mid
y\in E^N,\,\gamma\in\calC(y,x,t)}\,.\]
In a wide range of situations,
this formula provides the \emph{unique}
viscosity solution satisfying the initial condition $U(0,x)=u_0(x)$.
Using the action potential we can also write the formula as
\[U(t,x)=\inf\set{u_0(y)+\phi(y,x,t)\mid y\in E^N}.\]
If the initial data $u_0$ is bounded,
then $U(t,x)$ is clearly well defined and bounded.
In our case, we know that solutions will not be bounded,
thus we need a condition ensuring that the function
$y\mapsto u_0(y)+\phi(y,x,t)$ is bounded by below.
Assuming $u_0\prec L+h$ we have the lower bound
\[u_0(x)-ht \leq u_0(y)+\phi(y,x,t)\]
for all $t>0$  and all $x\in E^N$,
but this is in fact an equivalent formulation
for the domination condition
$u_0\prec L+h$, that is to say $u\in\calS_h$.

\begin{definition}
[Lax-Oleinik semigroup] The backward\footnote{
The \emph{forward} semigroup is defined in a similar way,
see \cite{Eva2}.
This other semigroup gives the opposite solutions
of the reversed Hamiltonian $\tilde{H}(x,p)=H(x,-p)$.
In our case the Hamiltonian is reversible,
meaning that $\tilde{H}=H$.}
Lax-Oleinik semigroup is the map
$T:[0,+\infty)\times \calS_h\to\calS_h$,
given by $T(t,u)=T_tu$, where
\[
T_tu(x)=\inf\set{u(y)+\phi(y,x,t)\mid y\in E^N}
\]
for $t>0$, and $T_0u=u$.
\end{definition}

Observe that $u\prec L+h$ if and only if
$u\leq T_tu+ht$ for all $t>0$.
Also,
we note that $T_tu-u\to 0$ as $t\to 0$, uniformly in $E^N$.
This is clear since for all $x\in E^N$ and $t>0$ we have
$-ht\leq T_tu(x)-u(x)\leq \phi(x,x,t)\leq \mu\,t^{1/3}$,
where the last inequality is justified by Remark
\ref{rmk-bound.phixxt}.

It is not difficult to see that $T$ defines an action on $\calS_h$,
that is to say, that the semigroup property $T_t\circ T_s=T_{t+s}$
is always satisfied.
Thus the continuity at $t=0$ spreads throughout all the domain.

Other important properties of this semigroup are the
\emph{monotonicity},
that is to say, that $u\leq v$ implies $T_tu\leq T_tv$,
and the \emph{commutation with constants},
saying that for every constant $k\in\R$,
we have $T_t(u+k)=T_tu+k$.

Thus,
for $u\in\calS_h$ and $s,t>0$ we can write
$T_su\leq T_s(T_tu+ht)=T_t(T_su)+ht$,
which implies that we have $T_su\in\calS_h$ for all $s>0$.

\begin{definition}
[Lax-Oleinik quotient semigroup]
The semigroup $(T_t)_{t\geq 0}$
defines a semigroup $(\hat{T}_t)_{t\geq 0}$
on the quotient space $\hat{\calS}_h=\calS_h/\R$,
given by $\hat{T}_t[u]=[T_tu]$.
\end{definition}

\begin{proposition}
\label{prop-LO.fixed.points}
Given $h\geq 0$ and $u\in\calS_h$ we have that,
$[u]\in\hat{\calS}_h$ is a fixed point of $(\hat{T}_t)_{t\geq 0}$
if and only if
there is $h'\in [0,h]$ such that $T_tu=u-h't$ for all $t\geq 0$.
\end{proposition}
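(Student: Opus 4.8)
The plan is to prove both implications directly from the definitions of the Lax-Oleinik semigroup and the domination condition. For the easy direction, suppose there is $h'\in[0,h]$ with $T_tu=u-h't$ for all $t\geq 0$. Then $[T_tu]=[u-h't]=[u]$ in $\hat{\calS}_h=\calS_h/\R$, so $[u]$ is fixed. For the converse, suppose $[u]$ is a fixed point, i.e. for each $t\geq 0$ there is a real constant $c(t)$ with $T_tu=u+c(t)$. First I would check that $c$ is well-defined (immediate, since $T_tu-u$ is a constant function and one can evaluate at $x=0$), that $c(0)=0$, and that $c$ is continuous at $0$ — the latter follows from the uniform estimate $-ht\leq T_tu(x)-u(x)\leq \mu\,t^{1/3}$ recalled just before the statement (Remark \ref{rmk-bound.phixxt}). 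Then the semigroup property $T_{t+s}=T_t\circ T_s$ together with commutation with constants gives $u+c(t+s)=T_t(u+c(s))=T_tu+c(s)=u+c(t)+c(s)$, so $c(t+s)=c(t)+c(s)$: the function $c$ is additive. An additive function on $[0,+\infty)$ that is continuous at $0$ is linear, so $c(t)=-h't$ for some real constant $h'$ (the sign chosen for convenience), and hence $T_tu=u-h't$ for all $t\geq 0$.

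It remains to pin down the range $h'\in[0,h]$. The upper bound $h'\leq h$ and lower bound $h'\geq 0$ should both come from the defining inequalities already in play. On one hand, $u\in\calS_h$ means exactly $u\prec L+h$, which (as noted in the excerpt) is equivalent to $u\leq T_tu+ht$ for all $t>0$; substituting $T_tu=u-h't$ gives $u\leq u-h't+ht$, i.e. $(h-h')t\geq 0$ for all $t>0$, hence $h'\leq h$. On the other hand, to get $h'\geq 0$ I would use that $T_tu\leq u$ whenever $u\in\calS_h$: indeed $\phi(x,x,t)\geq 0$ gives $T_tu(x)\leq u(x)+\phi(x,x,t)$, but that only yields $T_tu(x)\le u(x)+\mu t^{1/3}$, not quite $T_tu\leq u$. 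The cleaner route is monotonicity of $t\mapsto T_tu$ in the present normalized setting: since $T_tu=u-h't$, if $h'<0$ then $T_tu=u+|h'|t\to+\infty$ pointwise, contradicting the bound $T_tu(x)-u(x)\leq\mu t^{1/3}$ from Remark \ref{rmk-bound.phixxt} as $t\to+\infty$. So $h'\geq 0$.

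\textbf{Main obstacle.} The only genuinely delicate point is the passage from ``additive and continuous at $0$'' to ``linear''; this is standard but must be invoked carefully, and the continuity at $0$ of $c$ is precisely where the Newtonian-specific estimate $\phi(x,x,t)\leq\mu\,t^{1/3}$ enters (an arbitrary additive function need not be linear). Everything else is a bookkeeping exercise with the semigroup identities and the two-sided comparison $-ht\leq T_tu-u\leq\mu t^{1/3}$. I would organize the write-up as: (1) reduce fixed-point to $T_tu=u+c(t)$; (2) establish additivity of $c$ via the semigroup law and commutation with constants; (3) deduce linearity from continuity at $0$, itself from Remark \ref{rmk-bound.phixxt}; (4) extract $h'\in[0,h]$ from $u\prec L+h$ and the asymptotic bound; (5) note the converse is trivial.
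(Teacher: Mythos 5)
Your proof is correct and follows essentially the same route as the paper's: reduce the fixed-point condition to $T_tu = u + c(t)$, use the semigroup law and commutation with constants to show $c$ is additive, invoke continuity (via Remark \ref{rmk-bound.phixxt}) to upgrade additivity to linearity $c(t) = -h't$, then pin down $h'\in[0,h]$ from the two-sided comparison. The only cosmetic difference is in the lower bound $h'\geq 0$: the paper argues that $u\prec L-c(1)$ forces $-c(1)\geq 0$ since $\calS_{h''}=\emptyset$ for $h''<0$, while you observe directly that $h'<0$ would make $T_tu - u = |h'|t$ blow up faster than the $\mu t^{1/3}$ ceiling; these are the same obstruction phrased differently, and both are fine.
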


\begin{proof}
The sufficiency of the condition is trivial.
It is enough then to prove that it is necessary.
That $[u]$ is a fixed point of $\hat{T}$ means that we have
$\hat{T}_t[u]=[u]$ for all $t>0$.
That is to say, there is a function $c:\R^+\to\R^+$ such that
$T_tu=u+c(t)$ for each $t\in\R^+$.
From the semigroup property, we can easily deduce that
the function $c(t)$ must be additive,
meaning that $c(t+s)=c(t)+c(s)$
for all $t,s\geq 0$.
Moreover,
the continuity of the semigroup implies the continuity of $c(t)$.
As it is well known,
a continuous and additive function from $\R^+$ into itself is linear,
therefore we must have $c(t)=c(1)t$.
Now, since $u\leq T_tu+ht$ for all $t\in \R^+$,
we get $0\leq c(1)+h$.
On the other hand, since $u\prec L-c(1)$
and $\calS_h=\emptyset$ for $h<0$, hence $-c(1)\geq 0$.
We conclude that $c(t)=-h't$ for some $h'\in [0,h]$.
\end{proof}

\subsubsection{Calibrating curves and supersolutions}

We finish this section by relating the fixed points
of the quotient Lax-Oleinik semigroup and
the viscosity supersolutions of (\ref{HJh}).
This relationship is closely linked
to the existence of certain minimizers,
which will ultimately allow us to obtain
the hyperbolic motions we seek.

\begin{definition}
[calibrating curves]
Let $u\in\calS_h$ be a given subsolution.
We say that a curve $\gamma:[a,b]\to E^N$
is an \emph{$h$-calibrating}
curve of $u$,
if $u(\gamma(b))-u(\gamma(a))=\calA_{L+h}(\gamma)$.
\end{definition}

\begin{definition}
[h-minimizers]
A curve $\gamma:[a,b]\to E^N$
is said to be an \emph{$h$-minimizer} if
it verifies $A_{L+h}(\gamma)=\phi_h(\gamma(a),\gamma(b))$.
\end{definition}

\begin{remark}
\label{rmk-hcalib.hmin}
As we have see,
the fact that $u\in\calS_h$ is characterized by $u\prec L+h$.
Therefore for all $x,y\in E^N$ we have
\[
u(x)-u(y)\leq \phi_h(x,y)\leq \calA_{L+h}(\gamma)
\]
for any $\gamma\in\calC(x,y)$.
It follows that every $h$-calibrating curve of $u$
is an $h$-minimizer.
\end{remark}

It is easy to prove that restrictions of $h$-calibrating curves
of a given subsolution $u\in\calS_h$
are themselves $h$-calibrating curves of $u$.
This is also true, and even more easy to see, for $h$-minimizers.
But nevertheless,
there is a property valid for the calibrating curves
of a given subsolution but which is not satisfied in general
by the minimizing curves.
The concatenation of two calibrating curves is again calibrating.

\begin{lemma}
\label{lema-concat.calib}
Let $u\in\calS_h$.
If $\gamma_1\in\calC(x,y)$ and $\gamma_2\in\calC(y,z)$
are both $h$-calibrating curves of $u$,
and $\gamma\in\calC(x,z)$ is a concatenation of
$\gamma_1$ and $\gamma_2$,
then $\gamma$ is also an $h$-calibrating curve of $u$.
\end{lemma}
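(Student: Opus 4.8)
The plan is to reduce the statement to two elementary facts: the additivity of the Lagrangian action under concatenation, and the telescoping of the values of $u$ at the endpoints.

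First I would choose parametrizations so that $\gamma_1:[a,b]\to E^N$ and $\gamma_2:[b,c]\to E^N$ with $\gamma_1(b)=\gamma_2(b)=y$; this is harmless because $L$ is autonomous, so translating the time interval of a curve leaves $\calA_{L+h}$ unchanged. The concatenation $\gamma\in\calC(x,z)$ then restricts to $\gamma_1$ on $[a,b]$ and to $\gamma_2$ on $[b,c]$, and splitting at $t=b$ the integral that defines the action gives
\[
\calA_{L+h}(\gamma)=\calA_{L+h}(\gamma_1)+\calA_{L+h}(\gamma_2).
\]

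Next I would invoke the calibration hypotheses: $u(y)-u(x)=\calA_{L+h}(\gamma_1)$ because $\gamma_1$ is $h$-calibrating for $u$, and $u(z)-u(y)=\calA_{L+h}(\gamma_2)$ because $\gamma_2$ is $h$-calibrating for $u$. Adding these two equalities, the term $u(y)$ cancels and, by the additivity just noted,
\[
u(z)-u(x)=\calA_{L+h}(\gamma_1)+\calA_{L+h}(\gamma_2)=\calA_{L+h}(\gamma),
\]
which is exactly the assertion that $\gamma$ is an $h$-calibrating curve of $u$.

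There is no serious obstacle in this argument; the only points needing a word of justification are that $\gamma_1$ and $\gamma_2$ glue into a single absolutely continuous curve and that $\calA_{L+h}$ adds up along the gluing, both immediate since the action is an integral over the time interval of an autonomous Lagrangian. It is worth stressing the contrast with the $h$-minimizers mentioned just before the statement: for those the analogous gluing only yields $\phi_h(x,z)\leq\phi_h(x,y)+\phi_h(y,z)$, with no reason for equality, whereas here the domination chain $u(z)-u(x)\leq\phi_h(x,z)\leq\calA_{L+h}(\gamma)$ of Remark \ref{rmk-hcalib.hmin} is forced to collapse to equalities by the computation above, which in particular also shows that the concatenation is an $h$-minimizer.
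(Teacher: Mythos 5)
Your proof is correct and matches the paper's argument exactly: both add the two calibration identities $u(y)-u(x)=\calA_{L+h}(\gamma_1)$ and $u(z)-u(y)=\calA_{L+h}(\gamma_2)$, using additivity of the action under concatenation to conclude $u(z)-u(x)=\calA_{L+h}(\gamma)$. The extra remarks about reparametrization and about the concatenation being an $h$-minimizer are harmless elaborations of the same one-line argument.
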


\begin{proof}
We have $u(y)-u(x)=\calA_{L+h}(\gamma_1)$,
and $u(z)-u(y)=\calA_{L+h}(\gamma_2)$.
Adding both equations we get $u(z)-u(x)=\calA_{L+h}(\gamma)$.
\end{proof}

We give now a criterion for a subsolution to be a viscosity solution.
From here on, a curve defined on a noncompact interval will be said
$h$-calibrating if all its restrictions to compact intervals are too.
In the same way we define $h$-minimizers
over noncompact intervals.

We start by proving a lemma on calibrating curves of subsolutions.
\begin{lemma}
\label{lema-no.test.col}
Let $u\in\calS_h$, and let $\gamma:[a,b]\to E^N$ be an
$h$-calibrating curve of $u$.
If $x_0=\gamma(b)$ is a configuration with collisions,
then there is no Lipschitz function
$\psi$ defined on a neighbourhood of $x_0$
such that $\psi\le u$ and $\psi(x_0)=u(x_0)$.
\end{lemma}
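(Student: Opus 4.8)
The plan is to argue by contradiction. Suppose such a Lipschitz function $\psi$ exists on a neighbourhood $W$ of the colliding configuration $x_0=\gamma(b)$, with $\psi\le u$ on $W$ and $\psi(x_0)=u(x_0)$. The idea is to use the calibrating curve $\gamma$ to push the value of $u$ at $x_0$ down to a nearby non-colliding point along $\gamma$, and then to compare the cost of reaching $x_0$ from that point (which is controlled by the Hölder estimate of Theorem~\ref{thm-phih.estim}, hence behaves like $\sqrt{\text{distance}}$) against the Lipschitz upper bound for $\psi$ (which is linear in the distance). Since $\sqrt{\rho}\gg \rho$ as $\rho\to 0$, this should force a contradiction once we know the calibrating curve actually \emph{moves} into a colliding state, i.e. that it cannot dawdle near $x_0$ without paying a superlinear price.

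More precisely, first I would pick, for small $\epsilon>0$, the point $x_\epsilon=\gamma(b-\epsilon)$, which lies in $\Omega$ by Marchal's Theorem (restrictions of $h$-calibrating curves are $h$-calibrating, hence $h$-minimizers by Remark~\ref{rmk-hcalib.hmin}, so $\gamma$ is collision-free on the open interval). Calibration gives
\[
u(x_0)-u(x_\epsilon)=\calA_{L+h}\!\left(\gamma\mid_{[b-\epsilon,b]}\right)\ge \phi_h(x_\epsilon,x_0).
\]
Actually since $\gamma$ restricted to $[b-\epsilon,b]$ is an $h$-minimizer this is an equality, but the inequality $\ge$ is all I need. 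On the other hand, $\psi\le u$ and $\psi(x_0)=u(x_0)$ give
\[
u(x_0)-u(x_\epsilon)\le \psi(x_0)-\psi(x_\epsilon)\le k\,\norm{x_0-x_\epsilon}
\]
where $k$ is the Lipschitz constant of $\psi$ on $W$ (valid once $\epsilon$ is small enough that $x_\epsilon\in W$). Combining, $\phi_h(x_\epsilon,x_0)\le k\,\norm{x_0-x_\epsilon}$. Now I invoke a \emph{lower} bound on $\phi_h$ near collisions: because $x_0$ has a colliding pair $i,j$ while $x_\epsilon\in\Omega$, any curve from $x_\epsilon$ to $x_0$ must bring $r_{ij}$ from a positive value down to $0$, and the singular term $U\sim r_{ij}^{-1}$ in $L+h$ makes the action of such a curve bounded below by something of order $\norm{x_0-x_\epsilon}^{1/2}$ — the same Hölder exponent appearing in Theorem~\ref{thm-phih.estim}, now from below. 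Concretely one estimates $\calA_{L+h}(\gamma\mid_{[b-\epsilon,b]})\ge \int \tfrac12\norm{\dot\gamma}^2+U(\gamma)\,dt$ and uses the one-dimensional inequality $\int_0^T(\tfrac12\dot\rho^2+c\,\rho^{-1})\,dt\ge C\,\rho(0)^{1/2}$ for a path $\rho$ with $\rho(T)=0$, applied to $\rho(t)=r_{ij}(\gamma(t))$. This yields $C\,\norm{x_0-x_\epsilon}^{1/2}\le k\,\norm{x_0-x_\epsilon}$, i.e. $C\le k\,\norm{x_0-x_\epsilon}^{1/2}\to 0$ as $\epsilon\to 0$, a contradiction.

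The main obstacle is making the lower bound $\phi_h(x_\epsilon,x_0)\gtrsim \norm{x_0-x_\epsilon}^{1/2}$, or directly $\calA_{L+h}(\gamma\mid_{[b-\epsilon,b]})\gtrsim \norm{x_0-x_\epsilon}^{1/2}$, fully rigorous: one must control $\norm{x_0-x_\epsilon}$ in terms of the drop in the single distance $r_{ij}$ (which is fine, since $|r_{ij}(x_0)-r_{ij}(x_\epsilon)|\le \norm{x_0-x_\epsilon}$ up to the mass-metric constant, and $r_{ij}(x_0)=0$), and one must handle the possibility that $\gamma$ oscillates — but the scalar estimate $\int(\tfrac12\dot\rho^2+c\rho^{-1})\,dt\ge C\sqrt{\rho(0)}$ holds for \emph{any} absolutely continuous $\rho\ge 0$ with $\rho(T)=0$, by Cauchy–Schwarz or by the change of variables turning it into a length in the metric $(\tfrac{c}{\rho})^{1/2}|d\rho|$, so oscillation only helps. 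Everything else — the existence and collision-freeness of $x_\epsilon$, the calibration identity, the Lipschitz bound on $\psi$ — is immediate from the results already established above.
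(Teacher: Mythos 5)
Your setup is fine: $x_\epsilon=\gamma(b-\epsilon)\in\Omega$ by Marchal's Theorem, the calibration identity on $[b-\epsilon,b]$, and the comparison $u(x_0)-u(x_\epsilon)\le k\norm{x_0-x_\epsilon}$ are all correct. The gap is at the crucial step, the lower bound $\phi_h(x_\epsilon,x_0)\gtrsim\norm{x_0-x_\epsilon}^{1/2}$. The scalar estimate applied to $\rho(t)=r_{ij}(\gamma(t))$ for a pair colliding at $x_0$ only gives
\[
\calA_{L+h}\left(\gamma\mid_{[b-\epsilon,b]}\right)\;\geq\;C\,r_{ij}(x_\epsilon)^{1/2},
\]
i.e.\ a bound in terms of the pair separation at $x_\epsilon$, not in terms of $\norm{x_0-x_\epsilon}$. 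The inequality you invoke to bridge the two, $r_{ij}(x_\epsilon)=\abs{r_{ij}(x_\epsilon)-r_{ij}(x_0)}\le c\,\norm{x_0-x_\epsilon}$, goes in the wrong direction: it bounds $r_{ij}(x_\epsilon)$ \emph{above} by the displacement, whereas your contradiction $C\norm{x_0-x_\epsilon}^{1/2}\le k\norm{x_0-x_\epsilon}$ requires a bound \emph{below}, $r_{ij}(x_\epsilon)\ge c\,\norm{x_0-x_\epsilon}$. Nothing forces this: the calibrating curve may approach $x_0$ with the colliding bodies already extremely close while most of the displacement between $\gamma(b-\epsilon)$ and $x_0$ is carried by the other bodies; then $r_{ij}(x_\epsilon)^{1/2}$ can be of order $\norm{x_0-x_\epsilon}$ or smaller, and the comparison with $k\norm{x_0-x_\epsilon}$ yields no contradiction. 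A genuine two-sided bound of the form $\phi_h(y,x_0)\gtrsim\norm{y-x_0}^{1/2}$ near a collision configuration is plausible, but it does not follow from the one-pair estimate alone: one would have to couple the displacement of the non-colliding bodies with the elapsed time and the cost of keeping $r_{ij}$ small during that time, a cluster-type argument absent from your proposal.

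The paper's proof avoids any lower bound on $\phi_h$. Taking $b=0$, from $u(x_0)-u(\gamma(t))\le k\norm{\gamma(t)-x_0}$ and the calibration identity one first discards the (positive) potential and time terms to bound the kinetic part, and Cauchy--Schwarz then gives $\norm{\gamma(t)-x_0}\le 2k\abs{t}$; feeding this back into the identity bounds the potential part, $\int_t^0 U(\gamma)\,ds\le k\norm{\gamma(t)-x_0}\le 2k^2\abs{t}$. Dividing by $\abs{t}$ and letting $t\to 0$ gives $U(x_0)\le 2k^2<+\infty$, contradicting the collision at $x_0$. So while your contradiction framework (calibration plus the Lipschitz comparison) is the same, the effective mechanism is different: the Lipschitz bound controls the time average of $U$ along the calibrating curve itself, which must blow up at a collision, rather than trying to beat the linear Lipschitz bound with a square-root lower bound on the action potential.
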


\begin{proof}
Since our system is autonomous,
we can assume without loss of generality that $b=0$.
Thus the $h$-calibrating property of $\gamma$ says that
for every $t\in [a,0]$
\[
\int_t^0 \tfrac{1}{2}\norm{\dot\gamma}^2\,dt\,+
\int_t^0 U(\gamma)\,dt\,+ \,h\abs{t}=
\calA_{L+h}(\gamma\mid_{[t,0]})=u(x_0)-u(\gamma(t))\,.
\]
On the other hand, if $\psi\leq u$ is a $k$-Lipschitz function
on a neighbourhood of $x_0$ such that $\psi(x_0)=u(x_0)$
then we also have, for $t$ close enough to $0$,
\[
u(x_0)-u(\gamma(t))\leq
\psi(x_0)-\psi(\gamma(t))\leq
k\norm{\gamma(t)-x_0}.
\]
Therefore we also have
\[
\int_{t}^0\norm{\dot\gamma}^2\,dt\leq\,2k\norm{\gamma(t)-x_0}.
\]
Now, applying Cauchy-Schwarz we can write
\[
\int_{t}^0\norm{\dot\gamma}\,dt\leq
\abs{t}^{1/2}\left(\int_{t}^0\norm{\dot\gamma}^2\,dt\right)^{1/2}\]
and thus we deduce that
\[
\norm{\gamma(t)-x_0}^2\leq
\left(\int_{t}^0\norm{\dot\gamma}\,dt\right)^2\leq 
\,2k\norm{\gamma(t)-x_0}\,\abs{t}
\]
hence that
\[
\norm{\gamma(t)-x_0}\leq
\,2k\abs{t}.
\]
Finally, since
\[
\int_{t}^0U(\gamma)\,dt\leq u(x_0)-u(\gamma(t))\leq
\,k\norm{\gamma(t)-x_0}
\]
we conclude that
\[
\int_{t}^0U(\gamma)\,dt\leq
\,2k^2\abs{t}.
\]
Therefore, dividing by $\abs{t}$ and taking the limit for $t\to 0$
we get $U(x_0)\leq 2k^2$.
This proves that $x_0$ has no collisions.
\end{proof}

\begin{proposition}
\label{prop-criterion.visc.sol}
If $u\in\calS_h$ is a viscosity subsolution of (\ref{HJh}), and
for each $x\in E^N$ there is at least one $h$-calibrating curve
$\gamma:(-\delta,0]\to E^N$  with $\gamma(0)=x$,
then $u$ is in fact a viscosity solution.
\end{proposition}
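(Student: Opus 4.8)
The plan is to verify the supersolution condition directly from the definition, using the $h$-calibrating curves as the source of the required test-function estimate. Fix $x\in E^N$ and let $\psi\in C^1(E^N)$ be a test function such that $u-\psi$ has a local minimum at $x$. By adding a constant (which changes neither the subsolution/supersolution status nor the hypothesis) we may assume $\psi(x)=u(x)$, so that $\psi\leq u$ on a neighbourhood of $x$. We must show $H(x,d_x\psi)\geq h$. The first observation is that $x$ cannot be a collision configuration: a $C^1$ function is in particular locally Lipschitz, so if $x$ had collisions, the hypothesis (the existence of an $h$-calibrating curve $\gamma:(-\delta,0]\to E^N$ with $\gamma(0)=x$) together with Lemma \ref{lema-no.test.col} would give a contradiction. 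Hence $x\in\Omega$, the Lagrangian and Hamiltonian are smooth near $x$, and $d_x\psi\in (E^*)^N$ is an honest covector.

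Next I would exploit the calibrating curve quantitatively. Let $\gamma:(-\delta,0]\to E^N$ be $h$-calibrating with $\gamma(0)=x$; since $x\in\Omega$ and $\gamma$ is a minimizer near its endpoint, by Marchal's Theorem (and the remark following it, as used in Section \ref{s-var.setting}) the curve is a genuine $C^1$ motion on a slightly smaller interval, in particular differentiable at $0$ with a well-defined velocity $v=\dot\gamma(0)\in E^N$. The calibration identity restricted to $[t,0]$ reads
\[
u(x)-u(\gamma(t))=\calA_{L+h}\bigl(\gamma\mid_{[t,0]}\bigr)
=\int_t^0 \bigl(\tfrac12\norm{\dot\gamma}^2+U(\gamma)+h\bigr)\,ds .
\]
Using $\psi\leq u$ near $x$ with equality at $x$, for $t<0$ close to $0$ we get
\[
\psi(x)-\psi(\gamma(t))\leq u(x)-u(\gamma(t))
=\int_t^0\bigl(\tfrac12\norm{\dot\gamma}^2+U(\gamma)+h\bigr)\,ds .
\]
Dividing by $-t>0$ and letting $t\to 0^-$, the left-hand side tends to $d_x\psi(v)$ (chain rule, $\psi\in C^1$, $\gamma$ differentiable at $0$), while the right-hand side tends to $\tfrac12\norm{v}^2+U(x)+h=L(x,v)+h$, by continuity of the integrand at $s=0$. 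Hence
\[
d_x\psi(v)\leq L(x,v)+h .
\]

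Finally I would convert this into the Hamiltonian inequality. By the Fenchel inequality applied to the convex dual pair $(L,H)$ at the point $x\in\Omega$,
\[
H(x,d_x\psi)=\sup_{w\in E^N}\bigl(d_x\psi(w)-L(x,w)\bigr)\geq d_x\psi(v)-L(x,v),
\]
but actually we want a lower bound of the opposite flavour, so instead note that the displayed inequality $d_x\psi(v)-L(x,v)\leq h$ only controls one direction; the point is that $v$ is the \emph{maximizing} vector. Concretely, since $\gamma$ is a motion with energy $h$ (it is calibrated by a subsolution of level $h$, hence an $h$-minimizer by Remark \ref{rmk-hcalib.hmin}, and minimizers of $\calA_{L+h}$ have energy $h$), the Legendre transform gives that the momentum $p_\gamma=\partial L/\partial v\,(x,v)$ satisfies $H(x,p_\gamma)=h$ and $v$ is exactly the vector achieving $H(x,p_\gamma)=p_\gamma(v)-L(x,v)$, i.e. $L(x,v)=p_\gamma(v)-h$. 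Combining with $d_x\psi(v)\leq L(x,v)+h=p_\gamma(v)$ and using strict convexity of $w\mapsto L(x,w)$, one shows $d_x\psi$ must dominate $H(x,\cdot)-$nothing weaker than $h$: more precisely, for every $w$ one has $d_x\psi(w)-L(x,w)$, and evaluating the chain above at $w=v$ gives $H(x,d_x\psi)\geq d_x\psi(v)-L(x,v)$; but I have the \emph{wrong} sign, so the clean route is: the estimate $d_x\psi(v)\le L(x,v)+h$ holds for the specific $v=\dot\gamma(0)$, and since $u$ is already a subsolution we know $H(x,d_x\psi)\leq h$ is \emph{not} what we want — rather, I would instead run the one-sided difference quotient using that $\gamma$ \emph{reaches} $x$, producing $d_x\psi(v)\geq L(x,v)+h$ once the inequality $\psi\le u$ is used in the direction that makes $\psi(x)-\psi(\gamma(t))\ge \psi$-free lower bound; after this sign bookkeeping, $H(x,d_x\psi)\ge d_x\psi(v)-L(x,v)\ge h$ follows from Fenchel, completing the proof.

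\textbf{Main obstacle.} The genuinely delicate point is the sign/direction of the difference-quotient argument at the \emph{incoming} endpoint of the calibrating curve: one must be careful that a local minimum of $u-\psi$ and the calibration identity combine to give a \emph{lower} bound $d_x\psi(v)\ge L(x,v)+h$ (not the subsolution upper bound), and that the limit of the integral average equals $L(x,v)+h$, which requires $x\in\Omega$ so the integrand is continuous — this is precisely why Lemma \ref{lema-no.test.col} is invoked first to rule out collisions at $x$. Once $x\in\Omega$ is secured, the rest is the standard Fenchel-inequality passage.
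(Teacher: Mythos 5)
Your overall strategy is exactly the paper's: invoke Lemma \ref{lema-no.test.col} to rule out collisions at $x$, use the calibrating curve plus Marchal's Theorem to produce a well-defined velocity $v=\dot\gamma(0)$, run a one-sided difference quotient along $\gamma$, and finish with Fenchel. However, the central inequality in your displayed chain is reversed, and the proof never cleanly recovers from it.

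From $\psi\le u$ near $x$ with $\psi(x)=u(x)$, evaluating at $y=\gamma(t)$ gives $\psi(\gamma(t))\le u(\gamma(t))$, hence
\[
\psi(x)-\psi(\gamma(t))\;\ge\;u(x)-u(\gamma(t))\;=\;\calA_{L+h}\bigl(\gamma\mid_{[t,0]}\bigr),
\]
\emph{not} the $\le$ you wrote. The direction matters: the local-minimum hypothesis must force the test-function difference quotient to dominate the averaged Lagrangian, giving $d_x\psi(v)\ge L(x,v)+h$ after $t\to 0^-$, and then Fenchel's inequality $H(x,d_x\psi)\ge d_x\psi(v)-L(x,v)$ yields $H(x,d_x\psi)\ge h$ immediately. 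With the sign as you wrote it, you get $d_x\psi(v)\le L(x,v)+h$, which, combined with Fenchel, gives no lower bound at all — exactly the difficulty you then notice mid-paragraph. The detour through the energy of the minimizer and the Legendre transform of $v$ is not a repair: even granting that $v$ maximizes $p_\gamma(w)-L(x,w)$, that fact does not let you upgrade an upper bound on $d_x\psi(v)-L(x,v)$ into a lower bound on $\sup_w\bigl(d_x\psi(w)-L(x,w)\bigr)$, and in fact that machinery is not used in the paper at all. Your closing ``Main obstacle'' paragraph states the correct inequality $d_x\psi(v)\ge L(x,v)+h$, so you do understand what should come out; but the proof as written contains the flipped step and a paragraph of unconvincing sign-chasing rather than the one-line fix (reverse the inequality at the point where $\psi\le u$ is applied). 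Correcting that single line makes the argument coincide with the paper's proof.
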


\begin{proof}
We only have to prove that $u$ is a viscosity supersolution.
Thus we assume that $\psi\in C^1(E^N)$ and $x_0\in E^N$
are such that $u-\psi$ has a local minimum in $x_0$.
We must prove that $H(x_0,d_{x_0}\psi)\geq h$.
First of all, we exclude the possibility that $x_0$
is a configuration with collisions.
To do this, it suffices to apply Lemma \ref{lema-no.test.col},
taking the locally Lipschitz function $\psi+u(x_0)-\psi(x_0)$.

Let now $\gamma:(-\delta,0]\to E^N$ with $\gamma(0)=x_0$
and $h$-calibrating.
Thus for $t\in(-\delta,0]$
\[
\int_t^0L(\gamma,\dot\gamma)\,dt\,-\,ht
=u(x_0)-u(\gamma(t))
\]
and also, given that $x_0$ is a local minimum of $u-\psi$,
for $t$ close enough to $0$
\[
u(x_0)-u(\gamma(t))\leq
\psi(x_0)-\psi(\gamma(t))\,.
\]
Since $x_0\in\Omega$ and $\gamma$ is a minimizer,
we know that $\gamma$ can be extended beyond $t=0$
as solution of Newton's equation.
In particular $v=\dot\gamma(0)$ is well defined,
and moreover, using the previous inequality we find
\[
d_{x_0}\psi(v)=
\;\lim_{t\to 0^-}\,\frac{\psi(x_0)-\psi(\gamma(t))}{-t}\geq
L(x_0,v)+h
\]
which implies, by Fenchel's inequality,
that $H(x_0,d_{x_0}\psi)\geq h$.
\end{proof}

The following proposition complements the previous one.
It states that under a stronger condition,
the viscosity solution is in addition a fixed point
of the quotient Lax-Oleinik semigroup.

\begin{proposition}
\label{prop-fixedLO.if.calib}
Let $u\in\calS_h$ be a viscosity subsolution of (\ref{HJh}).
If for each $x\in E^N$ there is
an $h$-calibrating curve of $u$, say 
$\gamma_x:(-\infty,0]\to E^N$,
such that $\gamma_x(0)=x$,
then $T_tu=u-ht$ for all $t\geq 0$.
\end{proposition}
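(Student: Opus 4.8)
The plan is to show the two inequalities $T_tu \le u - ht$ and $T_tu \ge u - ht$ for every $t \ge 0$. The first is immediate: since $u \in \calS_h$ is equivalent to $u \prec L+h$, which in turn is equivalent to $u \le T_tu + ht$, we already have $u - ht \le T_tu$. So the real content is the reverse inequality $T_tu(x) \le u(x) - ht$ for all $x \in E^N$ and all $t > 0$, and this is where the hypothesis on the existence of entire calibrating curves $\gamma_x:(-\infty,0]\to E^N$ enters.

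\medskip
First I would fix $x \in E^N$ and $t > 0$, and take the calibrating curve $\gamma_x:(-\infty,0]\to E^N$ with $\gamma_x(0)=x$. Set $y = \gamma_x(-t)$, so that the restriction $\gamma_x\mid_{[-t,0]}$ lies in $\calC(y,x,t)$. The calibration property (applied to this compact restriction, which is calibrating because restrictions of calibrating curves are calibrating) gives
\[
u(x) - u(y) = \calA_{L+h}\big(\gamma_x\mid_{[-t,0]}\big) = \calA_L\big(\gamma_x\mid_{[-t,0]}\big) + ht .
\]
Therefore $u(y) + \calA_L(\gamma_x\mid_{[-t,0]}) = u(x) - ht$. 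Now by the very definition of the Lax-Oleinik semigroup,
\[
T_tu(x) = \inf\set{u(z)+\phi(z,x,t)\mid z\in E^N} \le u(y) + \phi(y,x,t) \le u(y) + \calA_L\big(\gamma_x\mid_{[-t,0]}\big) = u(x) - ht ,
\]
where the middle inequality holds because $\phi(y,x,t)$ is the \emph{minimum} of $\calA_L$ over $\calC(y,x,t)$ and $\gamma_x\mid_{[-t,0]}$ is one competitor. This proves $T_tu(x) \le u(x) - ht$, and combined with the reverse inequality above we conclude $T_tu = u - ht$ for every $t > 0$; the case $t=0$ is the trivial identity $T_0u = u$.

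\medskip
I do not anticipate a serious obstacle here: the argument is essentially a bookkeeping exercise that exploits the defining inf in the Lax-Oleinik semigroup together with the equality furnished by calibration. The one point that deserves a word of care is that $\phi(y,x,t)$ is the fixed-time action potential (the minimum over $\calC(y,x,t)$), and one must make sure the comparison uses the fixed-time potential $\phi(\cdot,\cdot,t)$ rather than the free-time potential $\phi_h$; this is automatic since the semigroup $T_t$ is defined through $\phi(\cdot,\cdot,t)$. It is also worth noting that no new appeal to Marchal's theorem or to regularity of $\gamma_x$ is needed at this stage: we only use that $\gamma_x$ is an admissible curve and that it calibrates $u$, both of which are given by hypothesis.
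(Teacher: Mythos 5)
Your proof is correct and follows essentially the same route as the paper: split into the two inequalities, get $T_tu \ge u - ht$ from domination, and get $T_tu \le u - ht$ by plugging $y = \gamma_x(-t)$ into the infimum defining $T_tu$. The only cosmetic difference is that the paper writes the sharper equality $u(x)-u(\gamma_x(-t))=\phi(\gamma_x(-t),x,t)+ht$ (using that $h$-calibrating curves are fixed-time minimizers), whereas you avoid invoking minimality by using only the trivial inequality $\phi(y,x,t)\le\calA_L(\gamma_x\mid_{[-t,0]})$, which is a marginal simplification but changes nothing substantive.
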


\begin{proof}
For each $x\in E^N$, for $t\geq 0$ we have
\[
T_tu(x)-u(x)=\inf\set{u(y)-u(x)+\phi(y,x,t)\mid y\in E^N} 
\]
thus it is clear that $T_tu(x)-u(x)\geq -ht$
since we know that $u\prec L+h$.
On the other hand,
given that $\gamma_x$ is  an $h$-calibrating curve of $u$,
\[
u(x)-u(\gamma_x(-t))=\phi(\gamma_x(-t),x,t)+ht.
\]
Writing $y_t=\gamma_x(-t)$ we have that
$u(y_t)-u(x)+\phi(y_t,x,t)=-ht$
and we conclude that $T_tu(x)-u(x)\leq -ht$.
We have proved that $T_tu=u-ht$ for all $t\geq 0$.
\end{proof}

\begin{remark}
\label{rmk-inverse.sens.lamin}
The formulation of the previous condition can confuse a little,
since the calibrating curves are parametrized on negative intervals.
Here the Lagrangian is symmetric,
thus reversing the time of a curve always preserves the action.
More precisely, 
given an absolutely continuous curve $\gamma:[a,b]\to E^N$,
if we define $\tilde{\gamma}$ on $[-b,-a]$
by $\tilde{\gamma}(t)=\gamma(-t)$,
then $\calA_L(\tilde{\gamma})=\calA_L(\gamma)$.

We can reformulate the calibrating condition of the previous
proposition in this equivalent way:
\emph{For each $x\in E^N$,
there is a curve $\gamma_x:[0,+\infty)\to E^N$ such that
$\gamma_x(0)=x$, and such that
$u(x)-u(\gamma_x(t))=\calA_{L+h}(\gamma_x\mid_{[0,t]})$
for all $t>0$}.
\end{remark}
\begin{remark}
The hypothesis of
Proposition \ref{prop-criterion.visc.sol}
implies the hypothesis of
Proposition \ref{prop-fixedLO.if.calib}.
This is exactly what we do in the proof of
Theorem \ref{thm-horofuns.have.lamin} below.
\end{remark}


\section{Ideal boundary of a positive energy level}

This section is devoted to the construction of global viscosity
solutions for the  Hamilton-Jacobi equations (\ref{HJh}).
The method is quite similar to that developed by Gromov
in \cite{Gro} to compactify locally compact metric spaces
(see also \cite{BaGrSch}, chpt. 3).

\subsection{Horofunctions as viscosity solutions}

The underlying idea giving rise to the construction of horofunctions
is that each point in a metric space $(X,d)$ can be identified 
with the distance function to that point.
More precisely,
the map $X\to C(X)$ which associates to each point $x\in X$
the function $d_x(y)=d(y,x)$
is an embedding such that for all $x_0,x_1\in X$
we have $\max\abs{d_{x_0}(y)-d_{x_1}(y)}=d(x_0,x_1)$.

It is clear that
any sequence of functions $d_{x_n}$ diverges if $x_n\to\infty$,
that is to say,
if the sequence $x_n$ escapes from any compact subset of $X$. 
However, for a noncompact space $X$,
the induced embedding of $X$ into the quotient space
$C(X)/\R$ has in general an image with a non trivial boundary.
This boundary can thus be considered
as an ideal boundary of $X$.

Here the metric space will be $(E^N,\phi_h)$ with $h>0$,
and the set of continuous functions $C^0(E^N)$
will be endowed with
the topology of the uniform convergence on compact sets.
Instead of looking at equivalence classes of functions,
we will take as the representative of each class
the only one vanishing at $0\in E^N$. 

\begin{definition}
[Ideal boundary]
We say that a function $u\in C^0(E^N)$ is in the ideal boundary
of level $h$ if there is a sequence of configurations $p_n$,
with $\norm{p_n}\to +\infty$ and such that for all $x\in E^N$
\[
u(x)=\lim_{n\to\infty}\phi_h(x,p_n)-\phi_h(0,p_n).
\]
We will denote $\calB_h$ the set of all these functions,
that we will also call horofunctions.
\end{definition}

The first observation is that $\calB_h\neq\emptyset$
for any value of $h\geq 0$.
This can be seen as a consequence of the estimate for the
potential $\phi_h$ we proved,
see Theorem \ref{thm-phih.estim}.

Actually for any $p\in E^N$,
the function $x\to \phi_h(x,p)-\phi_h(0,p)$ is in $\calS^0_h$,
the set of viscosity subsolutions vanishing at $x=0$.
Since by Corollary \ref{coro-visc.ssol.comp}
we know that $\calS^0_h$ is compact,
for any sequence of configurations $p_n$
such that $\norm{p_n}\to +\infty$
there is a subsequence
which defines a function in $\calB_h$ as above.

It is also clear that $\calB_h\subset\calS_h$.
Functions in $\calB_h$ are limits of functions in $\calS_h$,
and this set is closed in $E^N$
even for the topology of pointwise convergence.
But, since we already know that
the family $\calS_h$ is equicontinuous,
the convergence is indeed uniform on compact sets.

\begin{notation}
When the value of $h$ is understood,
we will denote $u_p$ the function
defined by $u_p(x)=\phi_h(x,p)$
where $p$ is a given configuration.
\end{notation}

One fact that should be clarifying is that for any $p\in E^N$,
the subsolution given by $u_p$ fails to be a viscosity solution
precisely at $x=p$.
If $x\neq p$, then there is a minimizing curve of $\calA_{L+h}$
in $\calC(p,x)$
(see Lemma \ref{lema-JM.geod.complet} below),
and clearly this curve is $h$-calibrating of $u_p$.
On the other hand, there are no $h$-calibrating curves of $u_p$
defined over an interval $(-\delta,0]$
and ending at $x=p$.
This is because $u_p\geq 0$, $u_p(p)=0$,
and $h$-calibrating curves, as $h$-minimizers,
have strictly increasing action.
Actually, this property of the $u_p$ functions occurs
for all energy levels greater than or equal to the critical one,
in a wide class of Lagrangian systems.
The simplest case to visualize is surely
the case of absence of potential energy in an Euclidean space,
in which we have $u_p(x)=h\,\norm{x-p}$ and
his $h$-calibrating curves are segments of the half-lines
emanating from $p$ with a constant speed (gradient curves).

This suggest that the horofunctions must be viscosity solutions,
which is what we will prove now.

\begin{theorem}
\label{thm-horofuns.are.viscsol}
Given $u\in\calB_h$ and $r>0$ there is,
for each $x\in E^N$, some $y\in E^N$ with $\norm{y-x}=r$,
and a curve $\gamma_x\in\calC(y,x)$ such that
$u(x)-u(y)=\calA_{L+h}(\gamma_x)$.
In particular, every function $u\in\calB_h$
is a global viscosity solution of (\ref{HJh}).
\end{theorem}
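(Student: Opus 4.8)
The plan is to fix $u \in \calB_h$, so that $u(x) = \lim_n \phi_h(x,p_n) - \phi_h(0,p_n)$ for some sequence $p_n$ with $\norm{p_n} \to +\infty$, and to produce, for a given $x \in E^N$ and $r > 0$, a calibrating curve arriving at $x$ along which $u$ decreases by exactly the action. First I would recall that for each $n$ with $\norm{p_n - x}$ large (in particular $> r$, which holds for $n$ big since $\norm{p_n} \to +\infty$), Lemma \ref{lema-JM.geod.complet} gives an $h$-minimizer $\gamma_n \in \calC(p_n, x)$ with $\calA_{L+h}(\gamma_n) = \phi_h(p_n, x)$; by Marchal's Theorem this curve is collision-free on its open interval and hence a genuine motion there. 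The key idea is to look at the point $y_n$ where $\gamma_n$ first meets the sphere $S_r(x) = \set{z \mid \norm{z - x} = r}$, and to consider the terminal arc $\sigma_n$ of $\gamma_n$ from $y_n$ to $x$. Since $\gamma_n$ is a minimizer, so is $\sigma_n$, i.e. $\calA_{L+h}(\sigma_n) = \phi_h(y_n, x)$, and by concatenation $\phi_h(p_n, x) = \phi_h(p_n, y_n) + \phi_h(y_n, x)$.

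Next I would extract convergent subsequences. The points $y_n$ lie on the compact sphere $S_r(x)$, so after passing to a subsequence $y_n \to y$ with $\norm{y - x} = r$. To get a limiting curve, I would use the a priori action bound: $\calA_{L+h}(\sigma_n) = \phi_h(y_n, x) \le \xi(\norm{y_n - x}) = \xi(r)$ by Theorem \ref{thm-phih.estim}, a bound uniform in $n$; together with the fact that the time-lengths of the $\sigma_n$ are bounded (one can also bound these using the action bound and the fact that $L \ge \tfrac{1}{2}\norm{\dot\gamma}^2$, or simply the structure of minimizers from Lemma \ref{lema-JM.geod.complet}), this gives, via Tonelli's lower semicontinuity (cf. the reference to \cite{daLMad}) and a standard compactness argument for minimizers, a limit curve $\gamma_x \in \calC(y, x)$ with $\calA_{L+h}(\gamma_x) \le \liminf_n \calA_{L+h}(\sigma_n)$. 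On the other side, I would estimate $u(x) - u(y)$: from $\phi_h(p_n, x) = \phi_h(p_n, y_n) + \phi_h(y_n, x)$ we get
\[
\big(\phi_h(x,p_n) - \phi_h(0,p_n)\big) - \big(\phi_h(y_n, p_n) - \phi_h(0,p_n)\big) = \phi_h(y_n, x),
\]
using symmetry of $\phi_h$; letting $n \to \infty$, the left side tends to $u(x) - u(y)$ (continuity of $u$ and $y_n \to y$), so $u(x) - u(y) = \lim_n \phi_h(y_n, x) = \lim_n \calA_{L+h}(\sigma_n) \ge \calA_{L+h}(\gamma_x)$. Combined with the general inequality $u(x) - u(y) \le \phi_h(y,x) \le \calA_{L+h}(\gamma_x)$ from $u \prec L + h$ (Remark \ref{rmk-hcalib.hmin}), we conclude $u(x) - u(y) = \calA_{L+h}(\gamma_x)$, so $\gamma_x$ is the desired $h$-calibrating curve.

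Finally, to deduce that $u$ is a global viscosity solution: $u \in \calB_h \subset \calS_h$ is already a viscosity subsolution, so by Proposition \ref{prop-criterion.visc.sol} it suffices that through each $x$ there pass an $h$-calibrating curve defined on some $(-\delta, 0]$ ending at $x$ — which is exactly what the construction above provides (reparametrize $\gamma_x$ so that it ends at time $0$; the calibrating property passes to restrictions). I expect the main obstacle to be the compactness step for the minimizers $\sigma_n$: one must ensure the limit curve $\gamma_x$ is nondegenerate (does not collapse to a point, which is prevented by $\norm{y_n - x} = r > 0$) and genuinely runs from $y$ to $x$, controlling both the time-intervals and the uniform convergence despite the singular potential $U$; the uniform action bound from Theorem \ref{thm-phih.estim} together with Marchal's Theorem (keeping the interior of each $\sigma_n$ away from collisions) is what makes this work, and is precisely why the new Hölder-type estimate was needed.
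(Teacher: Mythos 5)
Your proof is correct and follows essentially the same line as the paper's: take minimizers $\gamma_n$ from $p_n$ to $x$, record where they cross the sphere $S_r(x)$, pass to a convergent subsequence $y_n\to y$, and deduce $u(x)-u(y)=\phi_h(y,x)$ from the calibration of the $u_{p_n}$; the viscosity--solution claim then follows from Proposition \ref{prop-criterion.visc.sol}. The only difference is that you manufacture $\gamma_x$ as a Tonelli limit of the terminal arcs $\sigma_n$, whereas once $u(x)-u(y)=\phi_h(y,x)$ is established (using continuity of $\phi_h$, which Theorem \ref{thm-phih.estim} guarantees) one can simply invoke Lemma \ref{lema-JM.geod.complet} to produce a minimizer $\gamma_x\in\calC(y,x)$ with $\calA_{L+h}(\gamma_x)=\phi_h(y,x)$, making the extra compactness step for the curves unnecessary.
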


\begin{proof}
Let $u\in\calB_h$, that is to say
$u=\lim_n(u_{p_n}-u_{p_n}(0))$
for some sequence of configurations $p_n$ such that
$\norm{p_n}\to +\infty$, and $u_{p_n}(x)=\phi_h(x,p_n)$.

Let $x\in E^N$ be any configuration, and fix $r>0$.
Using Lemma \ref{lema-JM.geod.complet} we get,
for each $n>0$, a curve $\gamma_n\in\calC(p_n,x)$
such that $\calA_{L+h}(\gamma_n)=\phi_h(p_n,x)$.
Each curve $\gamma_n$ is thus
an $h$-calibrating curve of $u_{p_n}$.

If $\norm{p_n-x}>r$,
then the curve $\gamma_n$ must pass through a configuration
$y_n$ with $\norm{y_n-x}=r$.
Extracting a subsequence if necessary,
we may assume that this is the case for all $n>0$,
and that $y_n\to y$, with $\norm{y-x}=r$.
Since the arc of $\gamma_n$ joining
$y_n$ to $x$ also $h$-calibrates $u_{p_n}$ we can write
\[
u_{p_n}(x)-u_{p_n}(y_n)=\phi_h(y_n,x)
\]
for all $n$ big enough. We conclude that
\[
u(x)-u(y)=\lim_{n\to\infty}u_{p_n}(x)-u_{p_n}(y)=\phi_h(y,x)
\]
which proves the first statement.
The second one follows now from the criterion for
viscosity solutions given in Proposition
\ref{prop-criterion.visc.sol}.	
\end{proof}
Our next goal is to prove that horofunctions are actually
fixed points of the quotient Lax-Oleinik semigroup.
We will achieve this goal by showing the existence
of calibrating curves allowing the use of
Proposition \ref{prop-fixedLO.if.calib}.
These calibrating curves will be the key to the proof of
the existence of hyperbolic motions.

Thanks to the previous theorem
we can build maximal calibrating curves.
Then, Marchal's Theorem will allow us to assert that
these curves are in fact true motions of the $N$-body problem.
Next we have to prove
that these motions
are defined over unbounded above time intervals,
that is to say,
we must exclude the possibility of
collisions or pseudocollisions.
It is for this reason that we will also invoke
the famous von Zeipel's theorem\footnote{
This theorem had no major impact on the theory
until it was rediscovered after at least half a century later,
and proved to be essential for the understanding
of pseudocollision singularities, see for instance Chenciner's
Bourbaki seminar \cite{Che2}.
Among other proofs, there is a modern version due to McGehee 
 \cite{McG} of the proof originally outlined by von Zeipel.}
 that we recall now.

\begin{theorem*}
[1908, von Zeipel \cite{Zei}]
Let $x:(a,t^*)\to E^N$ be a maximal solution of the
Newton's equations of the $N$-body problem with $t^*<+\infty$.
If $\norm{x(t)}$ is bounded in some neighbourhood of $t^*$, then
the limit $\,x_c=\lim_{t\to t^*}x(t)$
exists and the singularity is therefore due to collisions. 
\end{theorem*}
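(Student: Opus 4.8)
The plan is to derive the statement from two classical facts: that a singularity at $t^*$ forces $U(x(t))\to+\infty$ (Painlevé), and that under the boundedness hypothesis the motion $x(t)$ actually converges as $t\to t^*$. Once convergence is known, the limit configuration is necessarily a collision configuration, and we are done.

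\emph{The blow-up of $U$.} First I would recall Painlevé's continuation argument showing that $U(x(t))\to+\infty$, equivalently that $r(t)=\min_{i<j}r_{ij}(t)\to 0$. The idea: if $U(x(t))$ stayed bounded along some sequence $t_n\to t^*$, then, since $\norm{x(t_n)}$ is also bounded, the configurations $x(t_n)$ would remain in a fixed compact subset $K\subset\Omega$; on $K$ both $U$ and $\nabla U$ are bounded, and the energy relation $\norm{\dot x}^2=2U(x)+2h$ then bounds the velocity as well, so the solution could be continued over a time interval of a fixed length past each $t_n$, contradicting maximality at $t^*$.

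\emph{Reduction to convergence of $x$.} Suppose we know that $x_c:=\lim_{t\to t^*}x(t)$ exists in $E^N$. If $x_c\in\Omega$, then by continuity of $U$ on $\Omega$ we would get $U(x(t))\to U(x_c)<+\infty$, contradicting the previous step; hence $x_c\notin\Omega$, i.e. $x_c$ is a collision configuration, which is exactly the assertion that the singularity is due to collisions. So everything reduces to proving that $x(t)$ has a limit as $t\to t^*$, and the natural way to do this is to show that the arc length $\int_{t_1}^{t^*}\norm{\dot x(s)}\,ds=\int_{t_1}^{t^*}\sqrt{2U(x(s))+2h}\,ds$ is finite, so that $x$ has finite total variation near $t^*$.

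\emph{The core estimate, and where the difficulty lies.} This is where the hypothesis $\sup_t\norm{x(t)}<+\infty$ enters. After passing to a uniformly translating frame one may assume the center of mass is at rest, so that $I=I_G$ and $I(t)$ stays bounded on $(t_1,t^*)$. From the Lagrange--Jacobi identity $\ddot I=4h+2U$ --- positive near $t^*$ since $U\to+\infty$ --- the function $\dot I$ is eventually monotone; integrating $\ddot I$ twice and using that $I$ remains bounded yields $\int_{t_1}^{t^*}(t^*-s)\,U(x(s))\,ds<+\infty$. The delicate point, which I expect to be the main obstacle, is to upgrade this weighted bound to $\int_{t_1}^{t^*}\sqrt{U(x(s))}\,ds<+\infty$: the weight $(t^*-s)$ cannot simply be redistributed by Cauchy--Schwarz, and one genuinely has to control how erratically $r(t)$ may approach $0$. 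The classical device is to analyse the cluster structure of the bodies as $t\to t^*$: they partition into groups whose internal diameters tend to $0$ while distinct groups stay bounded apart, and inside each group one applies Sundman--Weierstrass type differential inequalities, exactly as in the analysis of total collisions. This clustering argument --- sketched by von Zeipel and made rigorous in modern language by McGehee \cite{McG} --- is the substantive part; with it one obtains finite arc length, hence convergence of $x(t)$, and the conclusion then follows from the reduction above.
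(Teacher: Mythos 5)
The paper does not actually prove this statement: it is presented as a classical theorem of von Zeipel (1908), with a footnote pointing to McGehee's modern treatment \cite{McG} for a rigorous proof. There is therefore no ``paper's own proof'' to compare your attempt against. With that understood, your outline reproduces the standard structure of the classical argument correctly: Painlev\'e's continuation argument shows that a finite-time singularity forces $U(x(t))\to+\infty$; once convergence of $x(t)$ is known, the limit configuration must lie outside $\Omega$ and hence is a collision; the Lagrange--Jacobi identity $\ddot I=4h+2U$ together with boundedness of $I$ does, after two integrations by parts and checking that the boundary term $(t^*-s)\dot I(s)$ vanishes, yield $\int_{t_1}^{t^*}(t^*-s)\,U(x(s))\,ds<+\infty$; and you are right that this weighted bound cannot be promoted to $\int\sqrt{U}\,ds<+\infty$ by Cauchy--Schwarz alone, so finite arc length does not follow trivially. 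Identifying the cluster decomposition, with Sundman-type estimates inside each cluster, as the substantive step is also historically and mathematically accurate.

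The proposal is explicitly incomplete exactly there, and that is the one point you should be clear-eyed about: the convergence of $x(t)$ as $t\to t^*$, which is the entire nontrivial content of von Zeipel's theorem once the elementary reductions are made, is deferred to the cited cluster analysis without being carried out. What you have is an honest and well-organized roadmap of the classical proof, not a proof; essentially all the real work of \cite{McG} lies precisely in the step you leave as a black box. Since the paper itself cites the theorem rather than proving it, your level of detail is consistent with the paper's treatment, but a self-contained argument would require supplying the cluster partition near $t^*$, showing that the clusters are eventually stable and their diameters tend to zero, and running the Sundman--Weierstrass differential inequalities within each cluster to control the integral of $\sqrt{U}$.
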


\begin{theorem}
\label{thm-horofuns.have.lamin}
If $u\in\calB_h$ then for each $x\in E^N$ there is a curve
$\gamma_x:[0,+\infty)\to E^N$ with $\gamma_x(0)=x$,
and such that for all $t>0$
\[
u(x)-u(\gamma_x(t))=\calA_{L+h}(\gamma_x\mid_{[0,t]}).
\]
In particular,
every function $u\in\calB_h$ satisfies $T_tu=u-ht$ for all $t>0$.
\end{theorem}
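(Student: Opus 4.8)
The plan is to construct, for a fixed horofunction $u\in\calB_h$ and a base configuration $x\in E^N$, an infinite $h$-calibrating ray $\gamma_x:[0,+\infty)\to E^N$ by concatenating finite calibrating arcs produced by Theorem \ref{thm-horofuns.are.viscsol}, and then to appeal to Proposition \ref{prop-fixedLO.if.calib} (in the time-reversed reformulation of Remark \ref{rmk-inverse.sens.lamin}) to conclude $T_tu=u-ht$. The essential point beyond mere concatenation is that the resulting ray must be defined on an interval unbounded to the right, which is exactly where Marchal's Theorem and von Zeipel's Theorem enter.

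First I would apply Theorem \ref{thm-horofuns.are.viscsol} repeatedly: starting from $x_0=x$ and with a fixed radius $r>0$, obtain $y=x_1$ with $\norm{x_1-x_0}=r$ and a curve in $\calC(x_1,x_0)$ calibrating $u$; iterating, produce a sequence $x_0,x_1,x_2,\dots$ with $\norm{x_{k+1}-x_k}=r$ and calibrating arcs joining consecutive points. By Lemma \ref{lema-concat.calib} the concatenation of any finite string of these arcs is again $h$-calibrating, and by Remark \ref{rmk-hcalib.hmin} each such concatenation is an $h$-minimizer. Reversing time (Remark \ref{rmk-inverse.sens.lamin}), I get a curve $\gamma_x$ emanating from $x$ which is calibrating on every compact subinterval of its domain; since calibrating curves have strictly increasing action and each arc contributes at least $\phi_h(x_k,x_{k+1})>0$, and since the $x_k$ escape to infinity (their pairwise distances along the chain force $\norm{x_k}\to\infty$, hence the action along the chain diverges by the lower bound $\calA_{L+h}\ge h\tau$ together with Marchal-Saari-type growth), the total time elapsed along $\gamma_x$ over the full chain is infinite. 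More carefully: the restriction of $\gamma_x$ to any compact interval is an $h$-minimizer, hence by Lemma \ref{lema-JM.geod.complet} and Marchal's Theorem it avoids collisions on the open interval and is a genuine solution of Newton's equations there.

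The hard part is to show that $\gamma_x$ is actually defined on all of $[0,+\infty)$, i.e.\ that the maximal solution obtained by patching together these finite minimizing arcs does not run into a singularity in finite time. Here is where von Zeipel's Theorem is used: suppose $\gamma_x$ were a maximal solution on $[0,t^*)$ with $t^*<+\infty$. If $\norm{\gamma_x(t)}$ stays bounded near $t^*$, von Zeipel forces the singularity to be a collision, i.e.\ $\gamma_x(t)\to x_c\notin\Omega$; but $\gamma_x$ restricted to $[t^*-\epsilon,t^*)$ is a piece of an $h$-minimizer ending at the collision configuration $x_c$, and — reversing time again — it is then an $h$-calibrating curve of the subsolution $y\mapsto u(y)$ ending (in forward time) at a collision, contradicting Lemma \ref{lema-no.test.col} applied with the locally Lipschitz test function $\psi = u$ near $x_c$ (indeed $u$ itself is locally Lipschitz on $\Omega$ and continuous everywhere, and one checks the hypothesis $\psi\le u$, $\psi(x_c)=u(x_c)$ is met trivially by $\psi=u$). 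If instead $\norm{\gamma_x(t)}$ is unbounded near $t^*<\infty$, this contradicts the fact that along our chain the configuration moves at controlled speed: each finite concatenation is an $h$-minimizer, so by Theorem \ref{thm-phih.estim} the action over a subinterval of length $\tau$ is $O(\tau)+O(1)$, and combined with $\calA_{L+h}\ge \tfrac12\int\norm{\dot\gamma}^2$ this bounds $\int\norm{\dot\gamma_x}^2$ on any finite interval, preventing blow-up of $\norm{\gamma_x}$ in finite time. Hence $t^*=+\infty$.

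Finally, having a calibrating ray $\gamma_x:[0,+\infty)\to E^N$ with $\gamma_x(0)=x$ and $u(x)-u(\gamma_x(t))=\calA_{L+h}(\gamma_x|_{[0,t]})$ for all $t>0$, Proposition \ref{prop-fixedLO.if.calib} (whose hypothesis is precisely the existence of such rays for every $x$, as reformulated in Remark \ref{rmk-inverse.sens.lamin}) yields $T_tu=u-ht$ for all $t\ge 0$. The one genuinely delicate estimate is the speed/action control needed to rule out the unbounded branch of von Zeipel's dichotomy; everything else is bookkeeping with the already-established domination inequality, the concatenation lemma, and the collision-exclusion lemma.
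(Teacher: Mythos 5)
Your chain construction is a legitimate alternative to the paper's argument (the paper instead takes a Zorn-maximal calibrating curve $\gamma:(t^*,0]\to E^N$ and rules out $t^*>-\infty$), but the step on which everything hangs — that the total time along the chain is infinite — is not justified, and the reasons you give are wrong. Consecutive steps of size $r$ do not force $\norm{x_k}\to\infty$ (a chain with $\norm{x_{k+1}-x_k}=r$ can stay in a bounded region), and the inequality $\calA_{L+h}\geq h\tau$ bounds the elapsed time \emph{above} by the action, so divergence of the action along the chain says nothing about divergence of the time. The gap is fillable: each arc is $h$-calibrating, hence an $h$-minimizer (Remark \ref{rmk-hcalib.hmin}), so Lemma \ref{lema-time.estim} gives $\tau_k\geq \norm{x_{k+1}-x_k}^2/\bigl(2\phi_h(x_{k+1},x_k)\bigr)=r^2/\bigl(2\phi_h(x_{k+1},x_k)\bigr)$, and Theorem \ref{thm-phih.estim} bounds $\phi_h(x_{k+1},x_k)$ by a constant depending only on $r$ and $h$; hence $\tau_k\geq c(r,h)>0$ and $T_n=\tau_1+\dots+\tau_n\to\infty$. (If you want escape to infinity, it follows from Lemma \ref{lema-geod.are.lipschitz}: $u(x_{k+1})\leq u(x_k)-\sqrt{2h}\,r$, so $u(x_k)\to-\infty$ and the $x_k$ leave every compact set — but not for the reason you state.) With this fixed, your first two paragraphs plus Proposition \ref{prop-fixedLO.if.calib} already finish the proof, and the entire von Zeipel discussion is superfluous: the concatenated curve is defined on $[0,T_n]$ by construction and cannot "run into" a singularity; the only question ever was whether $T_n\to\infty$.

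The von Zeipel paragraph itself contains a genuine error in the bounded branch. The hypothetical collision point $x_c=\lim_{t\to t^*}\gamma_x(t)$ lies at the far end of the calibration: along $\gamma_x$ the value of $u$ decreases, so in the orientation of Lemma \ref{lema-no.test.col} the calibrating curve \emph{ends} at $x=\gamma_x(0)$ and \emph{starts} at $x_c$. That lemma only forbids collisions at the arrival point $\gamma(b)$, so it says nothing about $x_c$. Moreover $\psi=u$ is not an admissible test function there: Lemma \ref{lema-visc.subsol.locLip} gives local Lipschitz regularity of $u$ only on $\Omega$, while $x_c\notin\Omega$; near collisions the only available modulus is the H\"older-type bound of Theorem \ref{thm-phih.estim}. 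In fact no collision-exclusion at the far end is needed, nor is it true in general (calibrating arcs of the subsolutions $u_p$ with $p\notin\Omega$ do terminate at a collision): the paper handles a finite-time limit point, collision or not, by applying Theorem \ref{thm-horofuns.are.viscsol} \emph{at that point} to produce a further calibrating arc and concatenating via Lemma \ref{lema-concat.calib}, which contradicts maximality of the Zorn-maximal curve; von Zeipel is invoked only to ensure that a pseudocollision forces $\norm{\gamma}$ to be unbounded, so that the two cases are exhaustive. Your unbounded branch is essentially the paper's argument (quadratic-in-distance lower bound versus the linear upper bound of Theorem \ref{thm-phih.estim}), though the phrase "the action over a subinterval of length $\tau$ is $O(\tau)+O(1)$" is not what that theorem says — the bound is in terms of $\norm{x-y}$, not of elapsed time.
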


\begin{proof}
Let us fix a configuration $x\in E^N$.
By Theorem \ref{thm-horofuns.are.viscsol} we know that
$u$ has at least one $h$-calibrating curve
$\gamma:(-\delta,0]\to E^N$ such that $\gamma(0)=x$.
By application of Zorn's Lemma
we get a maximal $h$-calibrating curve
of the form $\gamma:(t^*,0]\to E^N$ with $\gamma(0)=x$.
We will prove that $t^*=-\infty$,
and thus the required curve can be defined on $[0,+\infty)$ by
$\gamma_x(t)=\gamma(-t)$.

Suppose by contradiction that $t^*>-\infty$.
Since $\gamma$ is an $h$-minimizing curve, we know that its
restriction to $(t^*,0)$ is a true motion with energy constant $h$.
Either the curve can be extended as a motion for values
less than $t^*$, or it presents a singularity at $t=t^*$.
In the case of singularity, we have at $t=t^*$ either a collision,
or a pseudocollision.
According to von Zeipel's Theorem,
in the pseudocollision case we must have
$\sup\set{\norm{\gamma(t)}\mid t\in (t^*,0]}=+\infty$.

Suppose that the limit $y=\lim_{t\to t^*}\gamma(t)$ exists.
Then by Theorem \ref{thm-horofuns.are.viscsol} we can choose
a calibrating curve $\tilde{\gamma}$ defined on $(-\delta,0]$
and such that $\tilde{\gamma}(0)=y$.
Thus the concatenation of $\tilde{\gamma}$ with $\gamma$
defines a calibrating curve $\gamma^+$ defined on
$(t^*-\delta,0]$ and such that $\gamma^+(0)=x$.
But this contradicts the maximality of $\gamma$.

On the other hand, if we suppose that $\norm{\gamma(t)}$
is unbounded, we can choose a sequence $y_n=\gamma(t_n)$
such that $\norm{y_n-x}\to +\infty$.
Let us define $A_n=\calA_L(\gamma\mid_{[t_n,0]})$.

A standard way to obtain a lower bound for $A_n$
is by neglecting the potential term which is positive.
Then by using  the Cauchy-Schwarz inequality we obtain that
for all $n>0$ we have $2\abs{t_n}A_n\geq \norm{y_n-n}^2$.
Since $\gamma$ is $h$-minimizing we deduce that
\[
\phi_h(y_n,x)\geq
\frac{\norm{y_n-x}^2}{2\abs{t_n}}+h\abs{t_n}
\]
for all $n>0$.
Since $\norm{y_n-x}\to +\infty$ and $t_n\to t^*>-\infty$
we get a contradiction with the upper estimate given by Theorem
\ref{thm-phih.estim}.
Indeed that theorem implies that
$\phi_h (y_n,x)$ is bounded above by a function which is
of order $O(\norm{y_n - x})$ as $\norm{y_n-x} \to +\infty$,  
which contradicts the displayed inequality.

The last assertion is a consequence of
Proposition \ref{prop-fixedLO.if.calib} and
Remark \ref{rmk-inverse.sens.lamin}.
\end{proof}

\subsection{Busemann functions}
\label{s-busemann}

We recall that a length space $(X,d)$ is say to be a
\emph{geodesic space}
if the distance between any two points is realized as the length
of a curve joining them.
A \emph{ray} in $X$ is an isometric embedding
$\gamma:[0,+\infty)\to X$.
As we already say in Sect. \ref{s-geom.view},
the Gromov boundary of a geodesic space is defined as
the quotient space of the set of rays of $X$ under the
equivalence relation: $\gamma\sim\gamma'$ if and only if the
function given by $d(\gamma(t),\gamma'(t))$ on $[0,+\infty)$
is bounded.

There is a natural way to associate a horofunction to each ray.
Let us write $d_p$ for the function measuring the distance to
the point $p\in X$ that is, $d_p(x)=d(x,p)$.
Once $\gamma$ is fixed, we define
\[
u_t(x)=d_{\gamma(t)}(x)-d_{\gamma(t)}(\gamma(0))\,,
\quad\text{ and }\quad
u_\gamma=\lim_{t\to\infty}u_t\,.
\]

These horofunctions $u_\gamma$ are called
\emph{Busemann functions}
and are well defined because of
the geodesic characteristic property of rays.
More precisely, for any ray $\gamma$ and for all $0\leq s\leq t$,
we have $d(\gamma(t),\gamma(s))=t-s$,
hence that $u_t\leq u_s$.
Moreover, 
it is also clear that $u_t\geq -d_{\gamma(0)}$,
which implies that $u_\gamma\ge -d_{\gamma(0)}$.
We also note that $u_\gamma=\lim_n u_{t_n}$ whenever
$(t_n)_{n>0}$ is a sequence such that $t_n\to\infty$.

It is well known that under some hypothesis on $X$ we have that,
for any two equivalent rays $\gamma\sim\gamma'$,
the corresponding Busemann functions are the same
up to a constant, that is $[u_\gamma]=[u_{\gamma'}]$.
Therefore in these cases a map is defined
from the Gromov boundary into the ideal boundary,
and it is thus natural to ask about the injectivity and the surjectivity
of this map.
However, the following simple and enlightening example
shows a geodesic space in which there are equivalent rays
$\gamma\sim\gamma'$ for which
$[u_\gamma]\neq [u_{\gamma'}]$.

\begin{example}
[The infinite ladder]
We define $X\subset\R^2$ as the union of the two straight lines
$\R\times\set{-1,1}$ with the segments $\Z\times [-1,1]$,
see figure \ref{ladder}. 

\begin{figure}[h]
\centering
\includegraphics[scale=1]{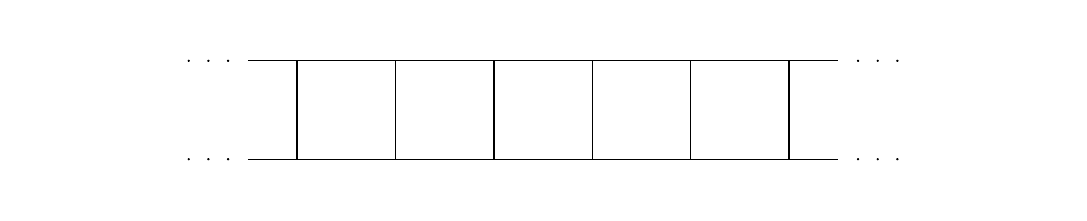}
\caption{The infinite ladder.} 
\label{ladder}
\end{figure}

We endow $X$ with the length distance induced by the standard
metric in $\R^2$.
It is not difficult to see that every ray in $X$
is eventually of the form
$x(t)=(\pm t+c, \pm 1)$.
Each possibility for the two signs determines one of the four
different Busemann functions which indeed compose the
ideal boundary.
Therefore,
 there are four points in the ideal boundary of $X$,
while there is only two classes of rays
composing the Gromov boundary of $X$.
\end{example}

Let us return to the context of the $N$-body problem,
that is to say,
let us take as metric space the set of configurations $E^N$,
with the action potential $\phi_h$ as the distance function.
Actually $(E^N,\phi_h)$ becomes a length space, and $\phi_h$
coincides with the length distance of the Jacobi-Maupertuis
metric when restricted to $\Omega$.
Proofs of all these facts are given in Sect. \ref{s-jm.dist}.
We are interested in the study of the ideal and Gromov
boundaries of this space,
in particular we need to understand the rays in
this space having prescribed asymptotic direction.
As we will see, they will be found as calibrating curves
of horofunctions in a special class.

\begin{definition}
[Directed horofunctions]
Given a configuration $a\neq 0$ we define the set
of horofunctions directed by $a$ as the set
\[
\calB_h(a)=
\set{u\in\calB_h\mid
u=\lim_n(u_{p_n}-u_{p_n}(0))\,,\;
p_n=\lambda_na+o(\lambda_n),\;
\lambda_n\to +\infty}.
\]
\end{definition}
\begin{remark}
Theorem \ref{thm-phih.estim} implies,
in a manner identical to the proof of
Corollary \ref{comp-visc-subsol},
that $\calB_h(a)\neq\emptyset$.
\end{remark}
  
\begin{figure}[h]
\centering
\includegraphics[scale=1.1]{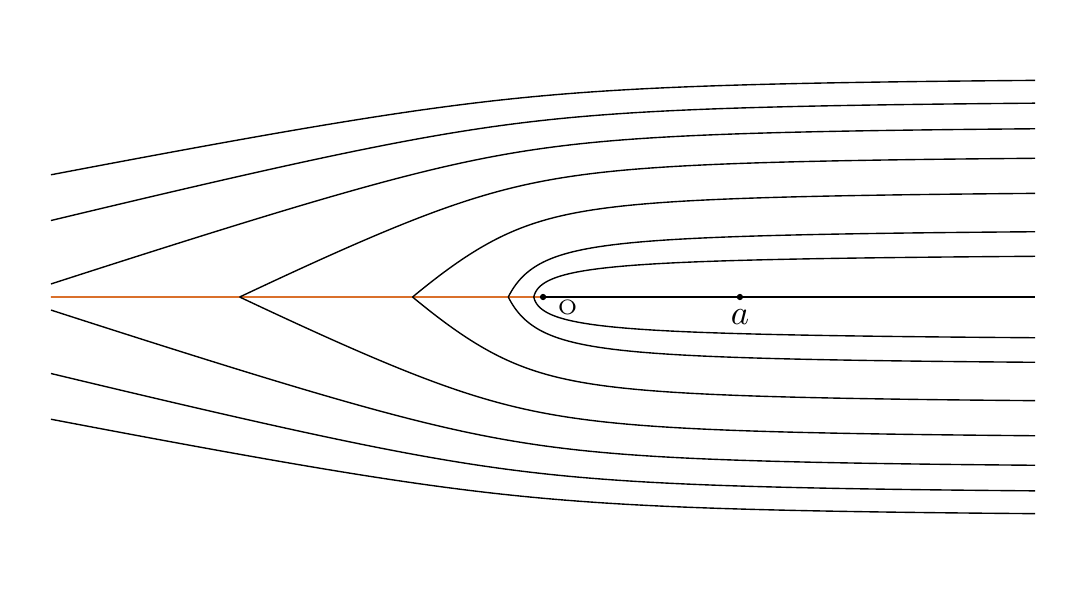}
\caption{Calibrating curves of a hyperbolic Busemann function
$u(x)=\lim_n (\phi_h(x,na)-\phi_h(0,na))$ in the Kepler problem.} 
\label{HyperKepler2}
\end{figure}

The following theorem is the key
for the proof of Theorem \ref{thm-princ}
and its proof is given in Sect. \ref{s-proofs.main}.

\begin{theorem}
\label{thm-calib.direct.horo}
Let $a\in\Omega$ and $u\in\calB_h(a)$.
If $\gamma:[0,+\infty)\to E^N$ satisfies
\[
u(\gamma(0))-u(\gamma(t))=\calA_{L+h}(\gamma\mid_{[0,t]})
\]
for all $t>0$,
then $\gamma$ is a hyperbolic motion of energy $h$
with asymptotic direction $a$.
\end{theorem}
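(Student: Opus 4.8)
The plan is to show first that $\gamma$ is a calibrating curve of the viscosity solution $u$, hence (by Marchal's Theorem, applied via the $h$-minimizing property noted in Remark \ref{rmk-hcalib.hmin} and Lemma \ref{lema-JM.geod.complet}) a genuine collisionless motion of energy $h$ defined on all of $[0,+\infty)$; then to identify its asymptotic behaviour. The collisionless and global-in-time properties should follow exactly as in the proof of Theorem \ref{thm-horofuns.have.lamin}: the restriction of $\gamma$ to any compact subinterval is an $h$-minimizer, so by Marchal it lies in $\Omega$ on the open interval, and von Zeipel together with the upper estimate of Theorem \ref{thm-phih.estim} rules out a singularity in finite time (a collision would allow one to prolong the calibrating curve, contradicting nothing here since the curve is already defined on $[0,+\infty)$ — the real point is that $\gamma$ being defined on all of $[0,+\infty)$ and calibrating forces it to be a motion without future singularity). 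So the only substantive work is the asymptotics.

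**Extracting the asymptotic direction.** Fix $t>0$ and write $p=\gamma(t)$. Since $\gamma|_{[0,t]}$ calibrates $u$ and $u=\lim_n(u_{p_n}-u_{p_n}(0))$ with $p_n=\lambda_n a+o(\lambda_n)$, I would compare $\calA_{L+h}(\gamma|_{[0,t]})=u(\gamma(0))-u(\gamma(t))$ with the upper bound for $\phi_h(\gamma(0),\gamma(t))$ coming from Theorem \ref{thm-phih.estim}, and with lower bounds obtained by neglecting the (positive) potential and applying Cauchy–Schwarz, exactly in the spirit of the estimates in the proof of Theorem \ref{thm-horofuns.have.lamin}. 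This pins down $\norm{\gamma(t)}\approx \sqrt{2h}\,t$ for large $t$, i.e. the motion is in the hyperbolic/partially-hyperbolic regime $R(t)=O(t)$ and not superhyperbolic, so by the Marchal–Saari theorem quoted in the introduction there is a configuration $b\in E^N$ with $\gamma(t)=tb+O(t^{2/3})$. The task is then to show $b=\sqrt{2h}\,a$ (so in particular $b\in\Omega$, which by the Marchal–Saari refinement upgrades the motion to genuinely hyperbolic, $r_{ij}\approx t$). To get the direction, I would use that for each fixed large $t$ the curve $\gamma|_{[0,t]}$ "points toward" $\gamma(t)$ and that $u$ itself is built from directions $p_n\to\infty$ along $a$: concretely, that calibration of $u$ forces $\gamma(t)/\norm{\gamma(t)}$ to converge to the shape of $a$. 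One clean way is to use the eikonal/Busemann structure — $u$ is $1$-Lipschitz for $\phi_h$, equals $-\phi_h(\cdot,\gamma(t))+$const along the calibrating curve, and along a ray in direction $a$ the horofunction $u$ decreases linearly at the "hyperbolic speed"; comparing the decay rate of $u$ along $\gamma$ with its behaviour on the half-line $\{\lambda a:\lambda>0\}$, together with the already-established $\norm{\gamma(t)}\sim\sqrt{2h}\,t$, should force $\gamma(t)/t\to\sqrt{2h}\,a$.

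**Upgrading $x(t)=tb+o(t)$ to a limit velocity.** Once $\gamma(t)=\sqrt{2h}\,t\,a+o(t)$ with $a\in\Omega$, the motion has $U(t)\to 0$ (mutual distances $\approx t$), hence $\norm{\dot\gamma(t)}\to\sqrt{2h}$ and the Newtonian force $\nabla U(\gamma(t))$ is integrable over $[1,+\infty)$ (it decays like $t^{-2}$). Integrating $\ddot\gamma=\nabla U(\gamma)$ twice gives $\dot\gamma(t)\to a_\infty$ for some $a_\infty\in E^N$ and $\gamma(t)=t\,a_\infty+o(t)$; comparing with $\gamma(t)=\sqrt{2h}\,t\,a+o(t)$ yields $a_\infty=\sqrt{2h}\,a\in\Omega$, so the $a_i$ are pairwise distinct and $\gamma$ is hyperbolic in Chazy's sense with asymptotic direction $a$. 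This is precisely the content of Lemma \ref{lema-cont.limitshape} invoked in the introduction, so I would cite it rather than redo the bootstrap.

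**Main obstacle.** The delicate point is the identification of the \emph{direction} $b=\sqrt{2h}\,a$, not the order of magnitude: the size estimate $\norm{\gamma(t)}\sim\sqrt{2h}\,t$ follows from the two-sided bounds on $\phi_h$ already at hand, but extracting that the limiting \emph{shape} of $\gamma(t)$ is that of $a$ requires genuinely using that $u\in\calB_h(a)$ — i.e. that the approximating points $p_n$ go to infinity \emph{along} $a$ — and coupling this with the calibration identity in a quantitative way (a Busemann-function/triangle-inequality argument for $\phi_h$, controlled by the Hölder-type estimate of Theorem \ref{thm-phih.estim} near collisions and by linear growth at infinity). I expect that step to be where the real work of the proof lies.
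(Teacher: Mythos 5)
Your outline tracks the paper's first step well: calibration implies $h$-minimizing, Cauchy--Schwarz plus the two-sided estimates on $\phi_h$ rule out superhyperbolicity, and Marchal--Saari then gives $\gamma(t)=tb+O(t^{2/3})$ with $b\neq 0$ and (since $U(t)\to 0$ in all cases) $\norm{b}=\sqrt{2h}$. But from there your proposal has a genuine gap, and moreover the order of attack is backward. You propose to identify the \emph{direction} of $b$ directly with that of $a$ (so that $b\in\Omega$ falls out for free because $a\in\Omega$), via a vague ``Busemann/triangle-inequality'' comparison of $u$ along $\gamma$ and along $\R^+a$. No such shortcut is offered concretely, and it cannot be obtained by soft estimates alone: before you know $b\in\Omega$, the only tools available (Chazy's continuity of the limit shape, Chazy's asymptotic expansion) simply do not apply, because they are stated for asymptotic configurations \emph{without} collisions. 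If $b\in E^N\setminus\Omega$ the motion is only partially hyperbolic, still $O(t)$, still expansive, still consistent with all the size estimates you invoke --- so nothing in your sketch excludes it.

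The paper handles this in the opposite order, and the hard work is precisely where you wave hands. It first proves $b\in\Omega$ by a dedicated argument: pass to the $h$-minimizers $\gamma_k$ from a fixed interior point $q_0=\gamma(1)$ to the defining points $p_{n_k}\to\infty$ along $a$, show $\dot\gamma_k(0)\to\dot\gamma(1)$ so that $\gamma_k\to\gamma$ in $C^1_{\mathrm{loc}}$, and then --- assuming $\mu(b)=+\infty$ where $\mu=UI^{1/2}$ is the configurational measure --- build explicit competitor curves $\eta_k$ (same radial profile $\rho_k(t)$, but shape frozen at $c_k=\theta_k(T_k)$ on a long middle interval) whose $\calA_{L+h}$-action is strictly smaller than that of $\gamma_k$ for $k$ large, because the angular and potential terms of $\gamma_k$ are forced to pay at least $\int\rho_k^{-1}(\mu(\gamma_k)-2\mu(a))\,dt$, which diverges logarithmically, while the cost of splicing in $\delta_k$ is bounded. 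This contradicts minimality, hence $b\in\Omega$. Only \emph{then} does the paper invoke Chazy's Lemma \ref{lema-cont.limitshape}: with $b\in\Omega$ known, if $b$ were not proportional to $a$ the $C^1$-approximants $\gamma_k$ would eventually be trapped in a cone around $b$ disjoint from a cone around $a$, contradicting $\gamma_k(t_k)=p_{n_k}\to\infty$ along $a$. So the ``main obstacle'' is not, as you frame it, pinning the direction; it is excluding collisions in the Marchal--Saari limit configuration, for which the configurational-measure comparison argument is essential and is entirely absent from your proposal.
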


We can thus deduce the following corollary,
whose proof is a very easy application of the Chazy's Theorem on
hyperbolic motions, see Remark \ref{rmk-Chazy.implic}.
\begin{corollary}
\label{coro-calib.bus.equiv}
If $a\in\Omega$ and $u\in \calB_h(a)$ then
the distance between any two $h$-calibrating curves for $u$
is bounded on their common domain.
\end{corollary}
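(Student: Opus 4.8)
The plan is to deduce this corollary directly from Theorem \ref{thm-calib.direct.horo} together with Chazy's theorem in the form of Remark \ref{rmk-Chazy.implic}. First I would dispose of the trivial case: if either of the two $h$-calibrating curves is defined only on a compact interval, then their common domain is compact, and the distance between them, being a continuous function on a compact set, is automatically bounded. Hence one may assume that both curves are defined on unbounded intervals.

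Next I would observe that, by the symmetry of the Lagrangian $L$ and the reformulation recorded in Remark \ref{rmk-inverse.sens.lamin}, any $h$-calibrating curve of $u$ defined on an unbounded interval can be reparametrised — reversing time and shifting the origin of the parameter — into a curve $\gamma\colon[0,+\infty)\to E^N$ with $u(\gamma(0))-u(\gamma(t))=\calA_{L+h}(\gamma\mid_{[0,t]})$ for all $t>0$. This operation changes neither the image of the curve nor the boundedness of the distance between two such curves, as long as the two curves are reversed consistently. Since $a\in\Omega$ and $u\in\calB_h(a)$, Theorem \ref{thm-calib.direct.horo} now applies to each of the two reparametrised curves and shows that each is a hyperbolic motion of energy $h$ with asymptotic direction $a$.

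Finally I would invoke Remark \ref{rmk-Chazy.implic}: two hyperbolic motions sharing the same asymptotic direction have bounded difference, because the two unbounded terms of their Chazy asymptotic developments — the linear term and the $\log(t)$ term — coincide. Applied to the pair of hyperbolic motions produced above, this shows that the distance between the two calibrating curves remains bounded on $[0,+\infty)$, hence on the common domain of the original curves, which is exactly the assertion. I do not expect a genuine obstacle in this argument; the only point that needs a little care is the bookkeeping of parametrisations, since an $h$-calibrating curve is naturally parametrised so that $u$ increases along it, whereas the curves appearing in Theorem \ref{thm-calib.direct.horo} and in the notion of hyperbolic motion are parametrised in the opposite, forward-dynamical sense.
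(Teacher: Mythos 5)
Your argument is correct and is essentially the paper's own: the corollary is stated there as an immediate consequence of Theorem \ref{thm-calib.direct.horo} combined with Chazy's theorem in the form of Remark \ref{rmk-Chazy.implic}, which is exactly the chain you use. The only thing you add is the explicit bookkeeping of the time reversal via Remark \ref{rmk-inverse.sens.lamin}, which is consistent with how the paper itself passes between calibrating curves and the forward-parametrized hyperbolic motions.
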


We can also apply Theorem \ref{thm-calib.direct.horo} to deduce
that calibrating curves of a hyperbolic Busemann function are
mutually asymptotic hyperbolic motions.
\begin{corollary}
\label{coro-calib.of.busemann}
If $\gamma$ is an hyperbolic $h$-minimizer,
and $u_\gamma$ its associated Busemann function,
then all the calibrating curves of $u_\gamma$ are hyperbolic
motions with the same limit shape and direction as $\gamma$.
\end{corollary}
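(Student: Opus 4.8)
The plan is to reduce the statement to Theorem \ref{thm-calib.direct.horo} by identifying the Busemann function $u_\gamma$ of a hyperbolic $h$-minimizer $\gamma$ as a directed horofunction. First I would fix the hyperbolic motion $\gamma:[0,+\infty)\to E^N$, which by hypothesis is an $h$-minimizer, and let $a\in\Omega$ be its asymptotic velocity, so that $\gamma(t)=\sqrt{2h}\,t\,a/\norm{a}+o(t)$ (renormalizing $a$ if needed so that $\norm{a}=1$, since only the \emph{direction} matters here). The Busemann function is defined as $u_\gamma=\lim_{t\to\infty} u_t$ where $u_t(x)=\phi_h(x,\gamma(t))-\phi_h(\gamma(0),\gamma(t))$; the relevant subtlety is that in our setting the defining sequence must be taken along the curve $\gamma$ itself, with base point shifted to $0\in E^N$ rather than $\gamma(0)$. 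So I would instead work with $\tilde u_t(x)=\phi_h(x,\gamma(t))-\phi_h(0,\gamma(t))$, which differs from $u_t$ only by the additive constant $\phi_h(\gamma(0),\gamma(t))-\phi_h(0,\gamma(t))$; the existence of the limit (equivalently the convergence of these constants) follows exactly as in the abstract Busemann discussion in Sect. \ref{s-busemann}, using that $\gamma$ restricted to compact intervals realizes the distance $\phi_h$, so the $\tilde u_t$ form a monotone (in the appropriate sense) equicontinuous family bounded below, hence converge uniformly on compacts by Corollary \ref{coro-visc.ssol.comp}.

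The key step is then to check $u_\gamma\in\calB_h(a)$. Setting $p_n=\gamma(t_n)$ for any sequence $t_n\to+\infty$, we have $\norm{p_n}\to+\infty$ and, since $\gamma$ is a hyperbolic motion with asymptotic velocity proportional to $a$, we have $p_n=\lambda_n a+o(\lambda_n)$ with $\lambda_n=\sqrt{2h}\,t_n/\norm{a}\to+\infty$. Thus $u_\gamma=\lim_n(u_{p_n}-u_{p_n}(0))$ with $p_n$ of exactly the form required in the definition of $\calB_h(a)$, so $u_\gamma\in\calB_h(a)$. Now let $\delta$ be any $h$-calibrating curve of $u_\gamma$; restricting attention to a maximal such curve and arguing as in Theorem \ref{thm-horofuns.have.lamin}, we may reparametrize it as $\delta:[0,+\infty)\to E^N$ so that $u_\gamma(\delta(0))-u_\gamma(\delta(t))=\calA_{L+h}(\delta\mid_{[0,t]})$ for all $t>0$. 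Theorem \ref{thm-calib.direct.horo} then applies directly and tells us $\delta$ is a hyperbolic motion of energy $h$ with asymptotic direction $a$ — i.e. the same limit shape and direction as $\gamma$.

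It remains to observe that $\gamma$ is itself among the calibrating curves of $u_\gamma$: this is the standard fact that a Busemann ray calibrates its own Busemann function, which follows because for $0\le s< t$ we have $u_\gamma(\gamma(s))-u_\gamma(\gamma(t))=\lim_{\tau\to\infty}\big(\phi_h(\gamma(s),\gamma(\tau))-\phi_h(\gamma(t),\gamma(\tau))\big)$, and since $\gamma$ is an $h$-minimizer this difference equals $\phi_h(\gamma(s),\gamma(t))=\calA_{L+h}(\gamma\mid_{[s,t]})$ for all $\tau$ large enough (using the concatenation/triangle-type equality along the minimizing curve). Hence all calibrating curves of $u_\gamma$, including $\gamma$, share the asymptotic direction $a$ and, by Chazy's Theorem as in Remark \ref{rmk-Chazy.implic} and Corollary \ref{coro-calib.bus.equiv}, the same limit shape.

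I expect the main obstacle to be the bookkeeping around the additive constants: one must make sure that the Busemann limit along $\gamma$ exists and coincides (up to a constant) with the horofunction obtained from the sequence $p_n=\gamma(t_n)$, so that the membership $u_\gamma\in\calB_h(a)$ is legitimate and not merely an equality of equivalence classes in $\hat\calS_h$. Once the correct normalization (vanishing at $0$) is fixed and the uniform-on-compacts convergence is in hand from equicontinuity, the rest is a direct appeal to Theorem \ref{thm-calib.direct.horo}.
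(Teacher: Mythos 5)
Your proposal is correct and takes essentially the same route as the paper: you identify $u_\gamma$ (up to normalization) as a directed horofunction in $\calB_h(a)$ by taking $p_n=\gamma(t_n)$ along the ray, invoke Theorem \ref{thm-calib.direct.horo}, and then check separately that $\gamma$ calibrates its own Busemann function via the additivity of $\phi_h$ along the minimizer. The only difference is cosmetic — the paper uses $p_n=\gamma(n)$ and writes the Busemann identity in terms of $u_{\gamma(t)}(\gamma(s))-u_{\gamma(t)}(\gamma(0))=-\phi_h(\gamma(0),\gamma(s))$, which is the same triangle-equality along $\gamma$ you invoke.
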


\begin{proof}
Since $\gamma$ is hyperbolic, we know that
there is a configuration without collisions $a\in\Omega$
such that $\gamma(t)=ta+o(t)$ as $t\to +\infty$.
Taking the sequence 
$p_n=\gamma(n)$ we have that $p_n=na+o(n)$,
and also that
\[
u_\gamma-u_\gamma(0)=
\lim_{n\to +\infty}[u_{p_n}-u_{p_n}(0)].
\]
This implies that $u_\gamma-u_\gamma(0)\in\calB_h(a)$,
hence that $u_\gamma$ is a viscosity solution and
moreover, Theorem \ref{thm-calib.direct.horo} says that
the calibrating curves of $u_\gamma$ all of the form $ta+o(t)$.
On the other hand, clearly $\gamma$ calibrates $u_\gamma$
since for any $0\leq s \leq t$ we have that
\[
u_{\gamma(t)}(\gamma(s))-u_{\gamma(t)}(\gamma(0))=
-\phi_h(\gamma(0),\gamma(s)),
\]
which in turn implies, taking the limit for $t\to +\infty$, that
\[
u_\gamma(\gamma(0))-u_\gamma(\gamma(s))=
-u_\gamma(\gamma(s))=
\phi_h(\gamma(0),\gamma(s)).\]
\end{proof}

\section{Proof of the main results on hyperbolic motions}

This part of the paper contains the proofs that so far it has been
postponed for different reasons.
In the first part we deal with several lemmas and technical results,
after which we complete the proof of the main results in Sect.
\ref{s-proofs.main}.

\subsection{Chazy's Lemma}
\label{s-Chazy.lema}

The first lemma that we will prove states that the set
$\calH^+\subset T\Omega$ of initial conditions in the
phase space given rise to hyperbolic motions is an open set.
Moreover,
it also says that the map defined in this set
which gives the asymptotic velocity in the future
is continuous. 
This is precisely what in Chazy's work appears as
\emph{continuité de l'instabilité}.
We give a slightly more general version for homogeneous
 potentials of degree $-1$,
but the proof works the same for potentials
of negative degree in any Banach space.

Intuitively what happens is that,
if an orbit is sufficiently close to some given hyperbolic motion,
then after some time the bodies will be so far away each other,
that the action of the gravitational forces
will not be able to perturb their velocities too much.

\begin{lemma}
\label{lema-cont.limitshape}
Let $U:E^N\to\R\cup\set{+\infty}$ be a homogeneous potential
of degree $-1$ of class $C^2$ on the open set
$\Omega=\set{x\in E^N\mid U(x)<+\infty}$.
Let $x:[0,+\infty)\to\Omega$ be a given solution of
$\ddot x=\nabla U(x)$ satisfying $x(t)=ta+o(t)$
as $t\to +\infty$ with $a\in\Omega$.

Then we have:
\begin{enumerate}
\item[(1)] The solution $x$ has asymptotic velocity $a$,
meaning that
\[
\lim_{t\to+\infty}\dot x(t)=a\,.
\]
\item[(2)] (Chazy's continuity of the limit shape)
Given $\epsilon>0$,
there are constants $t_1>0$ and $\delta>0$ such that,
for any maximal solution $y:[0,T)\to\Omega$ satisfying
$\norm{y(0)-x(0)}<\delta$ and
$\norm{\dot y(0)-\dot x(0)}<\delta$,
we have:
\begin{enumerate}
\item[(i)] $T=+\infty$, $\norm{y(t)-ta}<t\epsilon$ for all $t>t_1$,
and moreover
\item[(ii)] there is $b\in\Omega$ with $\norm{b-a}<\epsilon$ 
for which $y(t)=tb+o(t)$.
\end{enumerate}
\end{enumerate}
\end{lemma}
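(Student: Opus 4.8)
The plan is to treat part (1) first and then bootstrap part (2) from it by a Gronwall-type comparison argument. For (1), the idea is to integrate Newton's equations: since $x(t)=ta+o(t)$ with $a\in\Omega$, for $t$ large all mutual distances $r_{ij}(t)$ grow linearly, so $\nabla U(x(t))$ is $O(t^{-2})$ because $\nabla U$ is homogeneous of degree $-2$. Hence $\int^{+\infty}\|\nabla U(x(s))\|\,ds<+\infty$, and $\dot x(t)=\dot x(t_0)+\int_{t_0}^t\nabla U(x(s))\,ds$ converges to some limit vector $c$ as $t\to+\infty$. Then $x(t)/t\to c$, but we also assumed $x(t)/t\to a$, so $c=a$; this gives $\lim_{t\to+\infty}\dot x(t)=a$ and in particular $x$ is defined for all $t\ge 0$. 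One should record the quantitative consequence that will be reused: there is $t_1>0$ and constants $0<\alpha<\beta$ with $\alpha t<r_{ij}(t)<\beta t$ for all $i<j$, $t\ge t_1$, hence $\|\nabla U(x(t))\|\le K t^{-2}$ on $[t_1,+\infty)$ for a suitable $K$.

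For part (2), the plan is a continuity-with-respect-to-initial-conditions argument on the interval $[0,t_1]$ followed by a decay estimate on $[t_1,+\infty)$. Fix $\epsilon>0$. On the compact interval $[0,t_1]$, continuous dependence for the ODE $\ddot z=\nabla U(z)$ (which is $C^1$ on $\Omega$, hence locally Lipschitz) gives $\delta>0$ so that any maximal solution $y$ with $\|y(0)-x(0)\|<\delta$ and $\|\dot y(0)-\dot x(0)\|<\delta$ stays in $\Omega$ on $[0,t_1]$ and satisfies $\|y(t_1)-x(t_1)\|$ and $\|\dot y(t_1)-\dot x(t_1)\|$ as small as we like, say both $<\epsilon_0$, where $\epsilon_0$ is to be chosen. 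For the continuation on $[t_1,+\infty)$, set $w(t)=y(t)-x(t)$ and write
\[
\dot w(t)=\dot w(t_1)+\int_{t_1}^{t}\big(\nabla U(y(s))-\nabla U(x(s))\big)\,ds .
\]
As long as $y(s)$ stays, say, within distance $\tfrac{\alpha}{2}s$ of $x(s)$ — a condition we will show is self-improving — the segment joining $x(s)$ to $y(s)$ remains in the region where the bodies are $\Theta(s)$ apart, so $\|\nabla U(y(s))-\nabla U(x(s))\|\le C s^{-3}\|w(s)\|$ by the mean value inequality together with the fact that $D^2U$ is homogeneous of degree $-3$. Feeding this into the integral equations for $w$ and $\dot w$ and using a continuous-induction (bootstrap) argument, one obtains $\|\dot w(t)\|\le \|\dot w(t_1)\|+C'\epsilon_0 t_1^{-2}$ and $\|w(t)\|\le \|w(t_1)\|+t\,\|\dot w(t_1)\|+C''\epsilon_0 t_1^{-1}$ for all $t\ge t_1$, with $C',C''$ independent of $y$. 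Choosing $\epsilon_0$ (and hence $\delta$) small enough makes $\|w(t)\|<\epsilon t$ for all $t>t_1$, which is (i), and makes $\dot w(t)$ converge; since $\dot x(t)\to a$, also $\dot y(t)\to b:=a+\lim_{t\to\infty}\dot w(t)$ exists with $\|b-a\|<\epsilon$, and integrating gives $y(t)=tb+o(t)$, which is (ii). Finally $b\in\Omega$ follows for $\epsilon$ smaller than $\tfrac12\min_{i<j}|a_i-a_j|$, and $T=+\infty$ because $y$ stays in $\Omega$ and does not escape to infinity faster than linearly.

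The main obstacle is making the bootstrap on $[t_1,+\infty)$ rigorous: one must show that the a priori bound $\|y(s)-x(s)\|\le\tfrac{\alpha}{2}s$ used to control $\|\nabla U(y)-\nabla U(x)\|$ is actually propagated by the very estimate it feeds into, so that the set of times where it holds is both open and closed in $[t_1,+\infty)$ and therefore everything. This requires choosing the constants in the right order — first $t_1$ (from part (1)), then the target tolerance, then $\epsilon_0$, then $\delta$ — and checking that the integral inequality for $\|w\|$ genuinely closes, i.e. that the error term $C''\epsilon_0 t^{-1}$ stays below a fixed fraction of $\tfrac{\alpha}{2}t$; the linear-in-$t$ term $t\|\dot w(t_1)\|$ is the delicate one and is why we need $\|\dot w(t_1)\|$ (not just $\|w(t_1)\|$) to be controlled by $\delta$. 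Everything else is routine ODE estimation using only the homogeneity degrees of $U$, $\nabla U$, and $D^2U$, so it works verbatim in any Banach space as the statement claims.
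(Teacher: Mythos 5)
Your argument is correct in outline, and part (1) is essentially identical to the paper's proof (homogeneity of degree $-2$ of $\nabla U$ along the cone where $x(t)/t$ stays near $a$, hence $\|\nabla U(x(t))\|=O(t^{-2})$, integrability, and identification of the limit velocity with $a$). For part (2), however, you take a genuinely different route. The paper never compares $\nabla U(y)$ with $\nabla U(x)$ and never uses $D^2U$: after using continuous dependence on $[0,t_1]$ to get $\|y(t_1)-t_1a\|<t_1\rho/3$ and $\|\dot y(t_1)-a\|<\rho/3$, it bootstraps the single condition $y(s)\in sB$, where $B=\overline{B}(a,\rho)\subset\Omega$, using only $k=\max_B\|\nabla U\|$ to get $\|\dot y(s)-\dot y(t_1)\|\le k/t_1<\rho/3$, hence $\|\dot y(s)-a\|<2\rho/3$, and then $\|y(t)-ta\|<t\rho$ with strict inequality, so the set of admissible times is open and closed; convergence of $\dot y$ then follows by the same computation as in part (1). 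Your version instead runs a two-function bootstrap on $w=y-x$ and $\dot w$, which requires the mean value inequality for $\nabla U$ and the degree $-3$ homogeneity of $D^2U$ (fine under the $C^2$ hypothesis), and in exchange yields finer information (that $\dot y-\dot x$ converges and $y-x$ grows at a controlled linear rate), which is not needed for the statement. The paper's argument is more economical: one scalar bound on $\nabla U$ over a fixed ball closes the continuous induction immediately, whereas yours must be set up so that the a priori bound fed into the Hessian estimate is reproduced with a strictly better constant, which is exactly the delicate point you flag. Two small inaccuracies to repair when writing it up: the error terms in your displayed bounds are misplaced (under the hypothesis $\|w(s)\|\le M\epsilon_0 s$ one gets $\|\dot w(t)\|\le\|\dot w(t_1)\|+C M\epsilon_0/t_1$ and $\|w(t)\|\le\|w(t_1)\|+(t-t_1)\bigl(\|\dot w(t_1)\|+CM\epsilon_0/t_1\bigr)$, so the correction in $\|w\|$ grows linearly in $t$, not like a constant $C''\epsilon_0 t_1^{-1}$; the bootstrap still closes by taking $t_1$ large relative to the Hessian constant and $M>1$ fixed); and the lemma concerns a general homogeneous potential, so the references to mutual distances $r_{ij}$ should be replaced by the statement that $x(s)/s$, $y(s)/s$ and the segment between them remain in a fixed closed ball around $a$ contained in the cone $\Omega$, together with the reduction of $\epsilon$ to some $\rho<\epsilon$ with $\overline{B}(a,\rho)\subset\Omega$ to guarantee $b\in\Omega$ for arbitrary $\epsilon$.
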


\begin{proof}
Let $0<\rho<\epsilon$ such that the closed ball
$B=\overline B(a,\rho)$ is contained in $\Omega$.
Let $k=\max \set{\norm{\nabla U(z)}\mid z\in B}$, and choose
$t_0>0$ in such a way that for any $t\geq t_0$ we have
$\norm{x(t)-ta}<t\rho$. Therefore, since $\nabla U$ is
homogeneous of degree $-2$, for each $t\geq t_0$ we have
$t^{-1}x(t)\in B$ and
\[
\norm{\nabla U(x(t))} \leq
t^{-2}\norm{\nabla U(t^{-1}x(t))} \leq
kt^{-2}\,.
\]
Thus, for $t_0<t_1<t_2$ we can write
\[
\norm{\dot x(t_2)-\dot x(t_1)} \leq
\int_{t_1}^{t_2}\norm{\nabla U(x(s))}ds \leq
\int_{t_1}^{+\infty}ks^{-2}\,ds =
\frac{k}{t_1}
\]
from which we deduce that $\dot x(t)$ has a limit for $t\to +\infty$.
This limit can not be other than $\lim \dot x= a$,
since otherwise we would have that the
derivative of $x(t)-ta$ has a non null limit contradicting
the hypothesis $x(t)-ta=o(t)$.

Writing $x_1=x(t_1)$ and $\dot x_1=\dot x(t_1)$, we see that
we can fix $t_1>t_0$ large enough such that
\[
\norm{x_1-t_1a}<t_1\,\frac{\rho}{3}\,.
\]
If in addition $t_1>3k/\rho$ we also have
\[
\norm{\dot x_1-a}\leq \frac{k}{t_1}<\frac{\rho}{3}\,.
\]

On the other hand, since the vector field $X(x,v)=(v,\nabla U(x))$
is of class $C^1$, it defines a local flow on $T\Omega$.
Let us denote by $(x_0,\dot x_0)$ the initial condition
$(x(0),\dot x(0))$ of $x(t)$.
We can choose $\delta>0$ such that, for any choice of
$(y_0,\dot y_0)\in T\Omega$ verifying
$\norm{y_0-x_0}<\delta$ and $\norm{\dot y_0-\dot x_0}<\delta$,
the maximal solution $y:[0,T)\to\Omega$
with $y(0)=y_0$  and $\dot y(0)=\dot y_0$
satisfies the following two conditions: $T>t_1$, and
\[
\norm{y_1-t_1a}<t_1\,\frac{\rho}{3},
\;\textrm{ and }\;
\norm{\dot y_1-a}<\frac{\rho}{3}
\]
where $y_1=y(t_1)$ and $\dot y_1=\dot y(t_1)$.

Now, assume that $t\in[t_1,T)$ is such that $y(s)\in sB$
for all $s\in[t_1,t]$.
As before we have $\norm{\dot y(s)-\dot y_1}\leq k/t_1<\rho/3$,
and thus $\norm{\dot y(s)-a}<2\rho/3$.
Therefore we can deduce that
\begin{eqnarray*}
\norm{y(t)-ta}&\leq& \norm{y_1-t_1a}
                              +\int_{t_1}^t\norm{\dot y(s)-a}\,ds\\
 &<& t_1\,\frac{\rho}{3} +(t-t_1)\frac{2\rho}{3}<t\rho
\end{eqnarray*}
Since the last inequality is strict, in fact we have proved that
$y(s)\in s\interior{B}$ for all $s\in [t_1,t]$,
where $\interior{B}$ denotes the open ball $B(a,\rho)$.
Thus, the set of $t\in[t_1,T)$ such that $y(s)\in sB$
for all $s\in[t_1,t]$ is an open subset, and easily we conclude
that we must have $y(t)\in t\interior{B}$ for all $t\in[t_1,T)$.

$T=+\infty$.
Otherwise $K=\cup_{t\in[0,T]}\,tB$ would be compact and
$(y(t),\dot y(t))\in K\times B$ for all $t\in[t_1,T)$,
which is impossible for a maximal solution.
By the same argument used for the motion $x$,
we have that $\dot y(t)$ has a limit $b\in B$.
In particular $\norm{b-a}<\epsilon$ and $y(t)=tb+o(t)$.
\end{proof}

\subsection{Existence and properties of h-minimizers}

The following lemma ensures that for $h>0$,
the length space $(E^N,\phi_h)$ is indeed geodesically convex.

Actually the lemma give us minimizing curves
for any pair of configurations, even with collisions,
and it follows from Marchal's Theorem that
such curves avoid collisions at intermediary times.
The proof is a well-known argument based on the
Tonelli's Theorem for convex Lagrangians,
combined with Fatou's Lemma
for dealing with the singularities of the potential.

\begin{lemma}[Existence of minimizers for $\phi_h$]
\label{lema-JM.geod.complet}
Given $h>0$ and $x\neq y\in E^N$ there is a curve
$\gamma\in\calC(x,y)$  such that
$\calA_{L+h}(\gamma)=\phi_h(x,y)$.
\end{lemma}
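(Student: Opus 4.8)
The plan is to reduce the statement to the already-known existence of fixed-time minimizers and then to prove that the one-parameter infimum defining $\phi_h$ is attained. Recall that $\phi_h(x,y)=\inf_{\tau>0}\bigl(\phi(x,y,\tau)+h\tau\bigr)$, that $\phi(x,y,\tau)<+\infty$ for every $\tau>0$ by the basic estimate~(\ref{ineq-basic}), and that by Tonelli's theorem (see \cite{daLMad}, Theorem 2.3) for each $\tau>0$ there is $\gamma_\tau\in\calC(x,y,\tau)$ with $\calA_L(\gamma_\tau)=\phi(x,y,\tau)$. Since $x\neq y$ we have the elementary lower bound $\phi(x,y,\tau)\geq \norm{x-y}^2/(2\tau)$, obtained by discarding the nonnegative potential term and applying Cauchy--Schwarz; together with $\phi(x,y,\tau)\geq 0$ this gives $\phi(x,y,\tau)+h\tau\to +\infty$ both as $\tau\to 0^+$ and as $\tau\to+\infty$.

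First I would fix a minimizing sequence of times: choose $\tau_n>0$ with $\phi(x,y,\tau_n)+h\tau_n\to\phi_h(x,y)$, and set $C=\phi_h(x,y)+1$, so that $\phi(x,y,\tau_n)+h\tau_n\leq C$ for $n$ large. From $h\tau_n\leq C$ and $\norm{x-y}^2/(2\tau_n)\leq C$ one gets $\tau_n\in[\norm{x-y}^2/(2C),\,C/h]$, a compact subinterval of $(0,+\infty)$; passing to a subsequence, $\tau_n\to\tau_*\in(0,+\infty)$. Let $\gamma_n:=\gamma_{\tau_n}$, so $\calA_L(\gamma_n)=\phi(x,y,\tau_n)\leq C$; in particular the kinetic energies $\int_0^{\tau_n}\tfrac{1}{2}\norm{\dot\gamma_n}^2\,dt$ are uniformly bounded. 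Reparametrizing each $\gamma_n$ to the fixed interval $[0,1]$ by $\eta_n(s)=\gamma_n(\tau_n s)$, the curves $\eta_n$ satisfy $\eta_n(0)=x$, $\eta_n(1)=y$ and are uniformly bounded in $H^1([0,1],E^N)$; hence, after a further subsequence, $\eta_n\rightharpoonup\eta_*$ weakly in $H^1$ and $\eta_n\to\eta_*$ uniformly, with $\eta_*(0)=x$, $\eta_*(1)=y$. I then set $\gamma_*(t)=\eta_*(t/\tau_*)$ for $t\in[0,\tau_*]$, so that $\gamma_*\in\calC(x,y,\tau_*)$.

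It then remains to check $\calA_L(\gamma_*)\leq\liminf_n\phi(x,y,\tau_n)$. The kinetic part passes to the limit by weak lower semicontinuity of $v\mapsto\int\tfrac{1}{2}\norm{v}^2$ (the Tonelli estimate), using $\tau_n\to\tau_*$ to absorb the reparametrization factors. For the potential part, since $U\geq 0$ is lower semicontinuous on $E^N$ (with value $+\infty$ at collisions) and $\eta_n\to\eta_*$ pointwise, Fatou's lemma gives $\int_0^1 U(\eta_*)\,ds\leq\liminf_n\int_0^1 U(\eta_n)\,ds$; multiplying by $\tau_*$ and again using $\tau_n\to\tau_*$ yields $\int_0^{\tau_*}U(\gamma_*)\,dt\leq\liminf_n\int_0^{\tau_n}U(\gamma_n)\,dt$. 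Adding the two estimates, $\calA_L(\gamma_*)\leq\liminf_n\calA_L(\gamma_n)=\liminf_n\phi(x,y,\tau_n)$, hence $\calA_{L+h}(\gamma_*)=\calA_L(\gamma_*)+h\tau_*\leq\liminf_n\bigl(\phi(x,y,\tau_n)+h\tau_n\bigr)=\phi_h(x,y)$. Since $\gamma_*\in\calC(x,y)$ the reverse inequality $\calA_{L+h}(\gamma_*)\geq\phi_h(x,y)$ holds by definition, so $\calA_{L+h}(\gamma_*)=\phi_h(x,y)$ and $\gamma=\gamma_*$ is the desired minimizer.

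I expect the only delicate point to be the interplay of the two limit operations on the action — weak lower semicontinuity for the kinetic term and Fatou's lemma for the singular potential term — precisely because $U$ is unbounded near the collision set; this is exactly why the statement flags Tonelli's theorem together with Fatou's lemma. Everything else (the $\tau$-coercivity confining minimizing times to a compact interval, and the $H^1$-compactness after reparametrization) is routine. Finally, the hypothesis $x\neq y$ enters only through the bound $\phi(x,y,\tau)\geq\norm{x-y}^2/(2\tau)$, which fails for $x=y$; indeed $\phi_h(x,x)=0$ is never attained by a curve, since the action of any curve is strictly positive.
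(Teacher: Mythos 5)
Your proof is correct and follows essentially the same strategy as the paper: coerce the time variable into a compact subinterval of $(0,\infty)$ via the bound $\phi(x,y,\tau)+h\tau\geq\norm{x-y}^2/(2\tau)+h\tau$, reparametrize to a common interval, extract a uniformly convergent subsequence, and pass to the limit using Tonelli-type weak lower semicontinuity for the kinetic term together with Fatou's lemma for the singular potential. The only cosmetic differences are that you start from fixed-time Tonelli minimizers rather than an arbitrary minimizing sequence in $\calC(x,y)$, and you normalize to $[0,1]$ before scaling back to $[0,\tau_*]$, whereas the paper reparametrizes directly to $[0,\tau_0]$ (which avoids the small limit-swap you handle via $\tau_n\to\tau_*$); both are sound.
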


We need to introduce before some notation
and make a simple remark that we will use several times.
It is worth noting that the remark applies whenever we consider
a system defined by a potential $U>0$.

\begin{notation}
Given $h\geq 0$, for $x,y\in E^N$ and $\tau>0$ we will write
\[
\Phi_{x,y}(\tau)=\tfrac{1}{2}\norm{x-y}^2\tau^{-1}+h\,\tau\,.
\]
\end{notation}

\begin{remark}
\label{rmk-usual.bound}
Given $h\geq 0$ we have, for any pair of configurations
$x,y\in E^N$ and any $\tau>0$
\[
\phi(x,y,\tau)+ h \tau \;\geq\; \Phi_{x,y}(\tau).
\]
Indeed, given any pair of configurations
$x,y\in E^N$ and for any $\sigma\in\calC(x,y,\tau)$,
the Cauchy-Schwarz inequality implies
\[
\norm{x-y}^2\leq
(\;\int_a^b\norm{\dot \sigma}\,dt\;)^2\leq
\;\tau\,\int_a^b\norm{\dot \sigma}^2\,dt\,,
\]
thus, since $U>0$,
\[
\calA_L(\sigma)>
\tfrac{1}{2}\int_a^b\norm{\dot \sigma}^2\,dt \geq
\tfrac{1}{2}\,\norm{x-y}^2\tau^{-1}.
\]
This justifies the assertion,
since this lower bound does not depend on the curve $\sigma$.
\end{remark}

\begin{proof}
[Proof of Lemma \ref{lema-JM.geod.complet}]
Let $x,y\in E^N$ be two given configurations, with $x\neq y$.
We start by taking a minimizing sequence of
$\calA_{L+h}$ in $\calC(x,y)$, that is to say,
a sequence of curves $(\sigma_n)_{n> 0}$ such that
\[
\lim_{n\to\infty}\calA_{L+h}(\sigma_n)=\phi_h(x,y)\,.
\]
Then from this minimizing sequence we build a new one,
but this time composed by curves with the same domain.
To do this, we first observe that,
if each $\sigma_n$ is defined on an interval $[0,\tau_n]$,
then by the previous remark we know that
\[
\calA_{L+h}(\sigma_n)\geq
\phi(x,y,\tau_n)+h\tau_n \geq
\Phi_{x,y}(\tau_n)
\]
where $\Phi_{x,y}$ is the above defined function.
Since clearly $\Phi_{x,y}$ is a proper function on $\R^+$,
we deduce that
$0<\liminf \tau_n\leq \limsup \tau_n<+\infty$,
and therefore we can suppose without loss of generality
that $\tau_n\to\tau_0$ as $n\to\infty$.
It is not difficult to see that reparametrizing linearly
each curve $\sigma_n$ over the
interval $[0,\tau_0]$ we get a new minimizing sequence.
More precisely, for each $n>0$ the reparametrization is
the curve
$\gamma_n:[0,\tau_0]\to E^N$ defined by
$\gamma_n(t)=\sigma_n(\tau_n\tau_0^{-1}\, t)$.
Computing the action of the curves $\gamma_n$ we get
\[
\int_0^{\tau_0}\tfrac{1}{2}\norm{\dot\gamma_n}^2\,dt=
\tau_n\tau_0^{-1}
\int_0^{\tau_n}\tfrac{1}{2}\norm{\dot\sigma_n}^2\,dt
\]
and
\[
\int_0^{\tau_0} U(\gamma)\,dt=
\tau_0\tau_n^{-1}
\int_0^{\tau_n} U(\sigma)\,dt
\]
thus we have that
\[
\lim_{n\to\infty}\calA_{L+h}(\gamma)=
\lim_{n\to\infty}\calA_{L+h}(\sigma)=
\phi_h(x,y).
\]

On the other hand, It is easy to see that a uniform bound
for the action of the family of curves $\gamma_n$
implies the equicontinuity of the family.
More precisely,
if the bound $\calA_L(\gamma_n)\leq \tfrac{1}{2}\,M^2$
holds for all $n>0$,
then using Cauchy-Schwarz inequality as in Remark
\ref{rmk-usual.bound} we have
\[
\norm{\gamma_n(t)-\gamma_n(s)}\leq
M\abs{t-s}^\frac{1}{2}
\]
for all $t,s\in [0,t_0]$ and for all $n>0$.
Thus by Ascoli's Theorem we can assume that
the sequence $(\gamma_n)$ converges uniformly to
a curve $\gamma\in\calC(x,y,\tau_0)$.
Finally, we apply Tonelli's Theorem for convex Lagrangians to get
\[
\int_0^{\tau_0}\tfrac{1}{2}\norm{\dot\gamma}^2\,dt\;\leq\;
\liminf_{n\to\infty}
\int_0^{\tau_0}\tfrac{1}{2}\norm{\dot\gamma_n}^2\,dt
\]
and Fatou's Lemma to obtain that
\[
\int_0^{\tau_0} U(\gamma)\,dt\;\leq\;
\liminf_{n\to\infty}
\int_0^{\tau_0} U(\gamma_n)\,dt.
\]
Therefore $\calA_L(\gamma)\leq \phi(x,y,\tau_0)$,
which is only possible if the equality holds,
and thus we deduce that $\calA_{L+h}(\gamma)=\phi_h(x,y)$.
\end{proof}

The next lemma is quite elementary and provides a
rough lower bound for $\phi_h$.
However it has an interesting consequence, namely that
reparametrizations of the $h$-minimizers by arc length
of the metric $\phi_h$ are Lipschitz
with the same Lipschitz constant.
We point out that this lower bound only depends
on the positivity of the Newtonian potential.

\begin{lemma}\label{lema-geod.are.lipschitz}
Let $h>0$.
For any pair of configurations $x,y\in E^N$ we have
\[
\phi_h(x,y)\geq \sqrt{2h}\norm{x-y}.
\]
\end{lemma}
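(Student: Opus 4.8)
The claim to prove is the lower bound $\phi_h(x,y)\geq\sqrt{2h}\,\norm{x-y}$ for all configurations $x,y\in E^N$. The natural approach is to bound the action $\calA_{L+h}(\sigma)$ from below for an arbitrary curve $\sigma\in\calC(x,y,\tau)$, then optimize over $\tau>0$; since $\phi_h(x,y)$ is an infimum over all such curves, the bound passes to $\phi_h$. The key elementary inequality at hand is exactly the one recorded in Remark \ref{rmk-usual.bound}, namely $\phi(x,y,\tau)+h\tau\geq\Phi_{x,y}(\tau)=\tfrac{1}{2}\norm{x-y}^2\tau^{-1}+h\tau$, whose proof only uses $U>0$ together with Cauchy--Schwarz. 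So the whole matter reduces to a one-variable minimization.

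First I would invoke Remark \ref{rmk-usual.bound} to write, for every $\tau>0$,
\[
\phi(x,y,\tau)+h\tau\;\geq\;\tfrac{1}{2}\norm{x-y}^2\tau^{-1}+h\tau.
\]
Then I would minimize the right-hand side over $\tau>0$. Writing $c=\norm{x-y}$, the function $\tau\mapsto \tfrac{1}{2}c^2\tau^{-1}+h\tau$ attains its minimum at $\tau_\ast=c/\sqrt{2h}$ (assuming $c>0$; the case $c=0$ is trivial since the action is nonnegative), and the minimal value is $2\sqrt{\tfrac{1}{2}c^2\cdot h}=\sqrt{2h}\,c$. By the AM--GM inequality one gets this bound for all $\tau$ without even locating the minimizer. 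Hence $\phi(x,y,\tau)+h\tau\geq\sqrt{2h}\,\norm{x-y}$ for every $\tau>0$, and taking the infimum over $\tau$ (using the characterization $\phi_h(x,y)=\inf_{\tau>0}\{\phi(x,y,\tau)+h\tau\}$ from Sect. \ref{s-var.setting}) yields $\phi_h(x,y)\geq\sqrt{2h}\,\norm{x-y}$.

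There is essentially no main obstacle here: the lemma is, as the authors say, ``quite elementary,'' and every ingredient — the lower bound of Remark \ref{rmk-usual.bound}, the formula for $\phi_h$ as an infimum over time reparametrizations — is already available in the excerpt. The only point requiring a word of care is the degenerate case $x=y$, where $\norm{x-y}=0$ and the inequality holds trivially because $\phi_h(x,x)=0$ and the right-hand side vanishes; and one should note that the AM--GM step $\tfrac{1}{2}c^2\tau^{-1}+h\tau\geq\sqrt{2h}\,c$ is valid for all $\tau>0$ regardless of whether a minimizing $\tau$ is attained, so no appeal to Lemma \ref{lema-JM.geod.complet} on the existence of minimizers is needed.
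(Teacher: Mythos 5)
Your proposal is correct and follows essentially the same argument as the paper: invoke Remark~\ref{rmk-usual.bound} to bound $\phi(x,y,\tau)+h\tau$ from below by $\Phi_{x,y}(\tau)$, then minimize $\Phi_{x,y}$ over $\tau>0$ to obtain $\sqrt{2h}\,\norm{x-y}$. The only (correct and harmless) difference is that you make the AM--GM minimization explicit and observe that no existence of a minimizing time is required, whereas the paper simply records the minimal value of $\Phi_{x,y}$.
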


\begin{proof}
We note that
\[
\phi_h(x,y)=
\min\set{\phi(x,y,\tau)+\tau h\mid \tau>0}\geq
\min\set{\Phi_{x,y}(\tau)\mid\tau>0},
\]
and that
\[
\min\set{\Phi_{x,y}(\tau)\mid \tau>0}=
\sqrt{2h}\norm{x-y}.
\]
\end{proof}

\begin{remark}
\label{rmk-reparam.are.Lip}
If $\gamma(s)$ is a reparametrization of an $h$-minimizer
and the parameter is the arc length for the metric $\phi_h$, 
then we have
\[
\sqrt{2h}\,\norm{\gamma(s_2)-\gamma(s_1)}\leq
\phi_h(\gamma(s_1),\gamma(s_2))=\abs{s_2-s_1}.
\]
Therefore all these reparametrizations are Lipschitz
with Lipschitz constant $1/\sqrt{2h}$. 
\end{remark}

Finally, the following and last lemma will be used to estimate
the time needed by an $h$-minimizer to join two given
configurations.

\begin{lemma}
\label{lema-time.estim}
Let $h>0$, $x,y\in E^N$ two given configurations,
and let $\sigma\in\calC(x,y,\tau)$ be an $h$-minimizer.
Then we have
\[
\tau_-(x,y)\leq \tau \leq \tau_+(x,y)
\]
where $\tau_-(x,y)$ and $\tau_+(x,y)$
are the roots of the polynomial
\[
P(\tau)=2h\,\tau^2-2\phi_h(x,y)\,\tau+\norm{x-y}^2.
\] 
\end{lemma}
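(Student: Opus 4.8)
The plan is to obtain both inequalities from a single source: the standard lower bound $\phi_h(x,y)\le\calA_{L+h}(\sigma)=\phi(x,y,\tau)+h\tau$ combined with the elementary bound of Remark \ref{rmk-usual.bound}, namely $\phi(x,y,\tau)+h\tau\ge\Phi_{x,y}(\tau)=\tfrac12\norm{x-y}^2\tau^{-1}+h\tau$. Since $\sigma$ is an $h$-minimizer defined on an interval of length $\tau$, we have the chain
\[
\Phi_{x,y}(\tau)\;\le\;\phi(x,y,\tau)+h\tau\;=\;\calA_{L+h}(\sigma)\;=\;\phi_h(x,y).
\]
Thus $\tfrac12\norm{x-y}^2\tau^{-1}+h\tau\le\phi_h(x,y)$, which after multiplying by $2\tau>0$ and rearranging becomes exactly $P(\tau)=2h\tau^2-2\phi_h(x,y)\tau+\norm{x-y}^2\le 0$. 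Since $P$ is a quadratic with positive leading coefficient $2h>0$, the set $\{P\le 0\}$ is precisely the closed interval $[\tau_-(x,y),\tau_+(x,y)]$ between its two roots, which gives $\tau_-(x,y)\le\tau\le\tau_+(x,y)$.

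First I would note that the roots are genuinely real: by Lemma \ref{lema-geod.are.lipschitz} we have $\phi_h(x,y)\ge\sqrt{2h}\,\norm{x-y}$, so the discriminant of $P$ is $4\phi_h(x,y)^2-8h\norm{x-y}^2\ge 0$. (When $x=y$ both roots are $0$ and the statement is vacuous; the interesting case is $x\ne y$, where $\tau>0$ automatically since $\sigma$ cannot be constant.) Then the only remaining point is to confirm that $\tau$ lies in $\{P\le 0\}$ rather than outside it, but this is immediate from the displayed inequality $P(\tau)\le 0$ derived above.

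There is essentially no obstacle here: every ingredient is already established earlier in the excerpt — the existence of an $h$-minimizer $\sigma$ realizing $\phi_h(x,y)$, the lower bound $\phi(x,y,\tau)+h\tau\ge\Phi_{x,y}(\tau)$ from Remark \ref{rmk-usual.bound}, and the reality of the roots from Lemma \ref{lema-geod.are.lipschitz}. The proof is a one-line rearrangement together with the observation that a convex upward parabola is nonpositive exactly between its roots. If anything needs care, it is merely bookkeeping the case $x=y$ separately so that "$\tau>0$" and "dividing/multiplying by $\tau$" are legitimate.
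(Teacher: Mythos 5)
Your proposal is correct and is essentially the paper's own argument: the minimizing property gives $\phi_h(x,y)=\phi(x,y,\tau)+h\tau\geq\Phi_{x,y}(\tau)$ via Remark \ref{rmk-usual.bound}, which rearranges to $P(\tau)\leq 0$, placing $\tau$ between the roots. Your extra checks (nonnegative discriminant via Lemma \ref{lema-geod.are.lipschitz}, the degenerate case $x=y$) are harmless additions to the same proof.
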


\begin{proof}
Since $\sigma$ minimizes $A_{L+h}$,
in view of Remark \ref{rmk-usual.bound} we have
\[
\phi_h(x,y)=
\phi(x,y,\tau)+\tau h\geq
\Phi_{x,y}(\tau)
\]
that is,
\[\phi_h(x,y)\geq \frac{\norm{x-y}^2}{2\tau}+\tau h\,,\]
which is equivalent to say that $P(\tau)<0$.
\end{proof}

\subsection{Proof of Theorems \ref{thm-princ} and \ref{thm-calib.direct.horo} }
\label{s-proofs.main}

\begin{proof}[Proof of Theorem \ref{thm-princ}]
Given $h>0$, $a\in\Omega$ and $x_0\in E^N$ we proceed as
follows.
First, we define the sequence of functions
\[
u_n(x)=
\phi_h(x,na)-\phi_h(0,na)\,,\qquad x\in E^N.
\]
Each one of this functions is a viscosity subsolution
of the Hamilton-Jacobi equation $H(x,d_xu)=h$,
that is to say, we have $u_n\prec L+h$ for all $n>0$.
Since the estimate for the action potential $\phi_h$
given by Theorem \ref{thm-phih.estim} implies that
the set of such subsolutions is an equicontinuous family,
and since we have $u_n(0)=0$ for all $n>0$,
we can extract a subsequence converging to a function
\[
\mfu (x)=\lim_{k\to +\infty}u_{n_k}(x),
\]
and the convergence is uniform on compact subsets of $E^N$.
Actually the limit is a directed horofunction
$\mfu\in\calB_h(a)$.

By Theorem \ref{thm-horofuns.have.lamin}
we know that there is at least one curve $x:[0,+\infty)\to E^N$,
such that
\[
\phi_h(x_0,x(t))=
\calA_L(x\mid_{[0,t]})+ht=
\mfu (x_0)-\mfu (x(t)).
\]
for any $t>0$, and such that $x(0)=x_0$.
Proposition \ref{prop-criterion.visc.sol} now implies
that $\mfu$ is a viscosity solution of
the Hamilton-Jacobi equation $H(x,d_xu)=h$, and moreover,
that $\mfu$ is a fixed point of the quotient Lax-Oleinik semigroup.

Finally, by Theorem \ref{thm-calib.direct.horo} we have that
the curve $x(t)$ is a hyperbolic motion, with energy constant $h$,
and whose asymptotic direction is given by the configuration $a$.
More precisely, we have that
\[
x(t)=t\;\frac{\sqrt{2h}}{\norm{a}}\;a \,+\,o(t)
\]
as $t\to +\infty$, as we wanted to prove.
\end{proof}

\begin{proof}[Proof of Theorem \ref{thm-calib.direct.horo}]
For $h>0$ and $a\in\Omega$,
let $u\in\calB_h(a)$ be a given horofunction directed by $a$.
This means that there is a sequence of configurations
$(p_n)_{n>0}$, such that $p_n=\lambda_na+o(\lambda_n)$
with $\lambda_n\to+\infty$ as $n\to\infty$, and such that
\[
u(x)=\lim_{n\to\infty}(u_{p_n}(x)-u_{p_n}(0))
\]
where $u_p$ denotes the function $u_p(x)=\phi_h(x,p)$.
Let also $\gamma:[0,+\infty)\to E^N$ be the curve
given by the hypothesis and satisfying
\[
u(\gamma(0))-u(\gamma(t))=\calA_{L+h}(\gamma\mid_{[0,t]})
\]
for all $t>0$.
In particular $\gamma$ is an $h$-minimizer.
We recall that this means that the restrictions of
$\gamma$ to compact intervals are global
minimizers of $\calA_{L+h}$.
Thus the restriction of $\gamma$ to $(0,+\infty)$ is
a genuine motion of the $N$-body problem,
with energy constant $h$,
and it is a maximal solution if and only if
$\gamma(0)$ has collisions,
otherwise the motion defined by $\gamma$
can be extended as a motion 
to some interval $(-\epsilon, +\infty)$.

The proof is divided into three steps.
The first one will be to prove that the curve $\gamma$
is not a superhyperbolic motion. This will be deduced from
the minimization property of $\gamma$.
Then we will apply the Marchal-Saari theorem to conclude
that there is a configuration $b\neq 0$ such that
$\gamma(t)=tb+O(t^{2/3})$.
The second and most sophisticated step will be to
exclude the possibility of having collisions in $b$,
that is to say, in the limit shape of the motion $\gamma$.
Finally, once it is known that $\gamma$ is a hyperbolic motion,
an easy application of the Chazy's Lemma
\ref{lema-cont.limitshape} will allow us to conclude
that we must have $b=\lambda a$ for some $\lambda>0$.
Then the proof will be achieved by observing that, since
$\norm{b}=\sqrt{2h}$,
we must also have $\lambda=\sqrt{2h}\norm{a}^{-1}$.

We start now by proving that $\gamma$ is not superhyperbolic.
We will give a proof by contradiction.
Supposing that $\gamma$ is superhyperbolic
we can choose $t_n\to +\infty$ such
that $R(t_n)/t_n\to +\infty$.
We recall that $R(t)=\max\set{ r_{ij}(t)\mid i<j}$
denotes the maximal distance between the bodies at time $t$,
and that $R(t)=O(\norm{\gamma(t)})$.
Thus we can assume that
$\norm{\gamma(t_n)-\gamma(0)}/t_n\to +\infty$.
Given that the calibrating property implies that the curve
$\gamma$ is an $h$-minimizer,
for each $n>0$ we have
\[
\calA_L(\gamma\mid_{[0,t_n]})+ht_n=
\phi_h(\gamma(0),\gamma(t_n)).
\]
Let us write for short $r_n=\norm{\gamma(0)-\gamma(t_n)}$.
In view of the observation we made in
Remark \ref{rmk-usual.bound},
and using Theorem \ref{thm-phih.estim},
 we have the lower and upper bounds
\[
\tfrac{1}{2}\,r_n^2\;t_n^{-1} +ht_n\;\leq\;
\phi_h(\gamma(0),\gamma(t_n))\;\leq\;
\left(\alpha\;r_n+h\beta\; r_n^2\right)^{1/2}
\]
for some constants $\alpha,\beta >0$ and for any $n>0$.
It is not difficult to see that this is impossible for $n$ large
enough using the fact that $r_n\,t_n^{-1}\to +\infty$.
Thus by the Marchal-Saari theorem there is a configuration
$b\in E^N$ such that $\gamma(t)=tb+O(t^{2/3})$.
Since by the Lagrange-Jacobi identity $b=0$ forces $h=0$,
we know that $b\neq 0$.

We prove now that $b$ has no collisions, that is to say,
that $b\in\Omega$. This is our second step in the proof.
Let us write $p=\gamma(0)$, $q_0=\gamma(1)$
and let us also define $\sigma_0\in\calC(q_0,p,1)$
by reversing the parametrization of
$\gamma_0=\gamma\mid_{[0,1]}$.
Thus $\sigma_0$ calibrates the function $u$,
that is to say, we have
$u(p)-u(q_0)=\calA_{L+h}(\sigma_0)$.

Now, using Lemma \ref{lema-JM.geod.complet} we can define
a sequence of curves $\sigma'_n\in\calC(p_n,q_0)$,
such that $\calA_{L+h}(\sigma'_n)=\phi_h(p_n,q_0)$ for all $n>0$.
Thus each curve $\sigma'_n$
is an $h$-calibrating curve of the function
$u_{p_n}(x)=\phi_h(x,p_n)$.
It will be convenient to also consider the curves $\gamma'_n$
obtained by reversing the parametrizations of
the curves $\sigma'_n$.
If for each $n>0$ the curve $\sigma'_n$ is defined over
an interval $[-s_n,0]$, then we get a sequence of curves
$\gamma'_n\in\calC(q_0,p_n,s_n)$,
respectively defined over the intervals $[0,s_n]$. 

Since $q_0$ is an interior point of $\gamma$,
Marchal's Theorem implies that $q_0\in\Omega$.
Thus for each curve $\gamma'_n$ the velocity
$w_n=\dot\gamma'_n(0)$ is well defined.
Since $h$-minimizers have energy constant $h$,
we also have $\norm{w_n}^2=2(h+U(q_0))$ for all $n>0$.
This allow us to choose a subsequence $n_k$ such that
$w_{n_k}\to v_0$ as $k\to\infty$.
At this point we need to prove
that $\lim s_n=+\infty$.
This can be done by application of Lemma \ref{lema-time.estim}
to the $h$-minimizers $\gamma'_n$ as follows.
Given two configurations $x,y\in E^N$,
the polynomial given by the lemma satisfies
$P(\tau)\geq \norm{x-y}^2-2\phi_h(x,y)\tau$ for all $\tau>0$.
Therefore,
when $x\neq y$ its roots can be bounded below by
$\norm{x-y}^2/2\phi_h(x,y)$.
Using this fact, we have that for all $n>0$,
\[
s_n>\frac{\norm{q_0-p_n}^2}{2\,\phi(q_0,p_n)}.
\]
Then the upper bound for $\phi_h$ given by
Theorem \ref{thm-phih.estim} implies that $\lim s_n=+\infty$. 

Let us summarize what we have built so far.
From now on, let us write for short $q_k=p_{n_k}$, $t_k=s_{n_k}$,
$v_k=w_{n_k}$, and also $\gamma_k=\gamma'_{n_k}$ and
$\sigma_k=\sigma'_{n_k}$.
First, there is a sequence of configurations $(q_k)_{k>0}$,
such that, for some increasing sequence $n_k$
of positive integers, we have
$q_k=\lambda_{n_k}a+o(\lambda_{n_k})$ as $k\to\infty$.
Associated to each $q_k$ there is an $h$-minimizer
$\gamma_k:[0,t_k]\to E^N$, with $t_k\to +\infty$,
such that $\gamma_k\in\calC(q_0,q_k)$.
Moreover, $v_k=\dot\gamma_k(0)$ and we have
$v_k\to v_0$ as $k\to\infty$.
In addition, each reversed curve $\sigma_k\in\calC(q_k,q_0)$
is an $h$-calibrating curve of the function
$u_{q_k}(x)=\phi_h(x,q_k)$.

We will prove that $v_0=\dot\gamma(1)$. 
To do this, we start by considering the maximal solution
of Newton's equations with initial conditions $(q_0,v_0)$
and by calling $\zeta$ its restriction to positive times,
let us say for $t\in [0,t^*)$.
Next, we choose $\tau\in (0,t^*)$ and we observe that
we have $t_k>\tau$ for any $k$ big enough.
Thus, for these values of $k$, we have that
$\gamma_k(t)$ and $\dot\gamma_k(t)$ converge respectively
to $\zeta(t)$ and $\dot\zeta(t)$, and the convergence is uniform
for $t\in[0,\tau]$.
Therefore,
\[
\lim_{k\to\infty}\calA_{L+h}(\gamma_k\mid_{[0,\tau]})=
\calA_{L+h}(\zeta\mid_{[0,\tau]}).
\]
On the other hand, since on each compact set our function
$u(x)$ is the uniform limit of the functions
$u_k(x)=u_{q_k}(x)-u_{q_k}(0)$, we can also write
\[
u(q_0)-u(\zeta(\tau))=
\lim_{k\to\infty}\,(\,u_k(q_0)-u_k(\gamma_k(\tau))\,).
\]
We use now the fact that for each one of these values of $k$
we have, by the calibration property, that
\[
u_k(q_0)-u_k(\gamma_k(\tau))=
\calA_{L+h}(\gamma_k\mid_{[0,\tau]}),
\]
to conclude then that
\[
u(q_0)-u(\zeta(\tau))=\calA_{L+h}(\zeta\mid_{[0,\tau]}).
\]
Notice that what we have proved is that the reversed curve
$\zeta(-t)$ defined on $[-\tau,0]$
is indeed an $h$-calibrating curve of $u$.
The concatenation of this calibrating curve with the
calibrating curve $\sigma_0$
results, according to Lemma \ref{lema-concat.calib},
in a new calibrating curve,
defined on $[-\tau,1]$ and passing by $q_0$ at $t=0$.
Therefore this concatenation of curves is an $h$-minimizer,
which implies that it is smooth at $t=0$.
We have proved that $\dot\zeta(0)=v_0=\dot\gamma(1)$.
This also implies that $t^*=+\infty$ and that
$\zeta(t)=\gamma(t+1)$ for all $t\geq 0$.

\begin{figure}[h]
\centering
\includegraphics[scale=1.1]{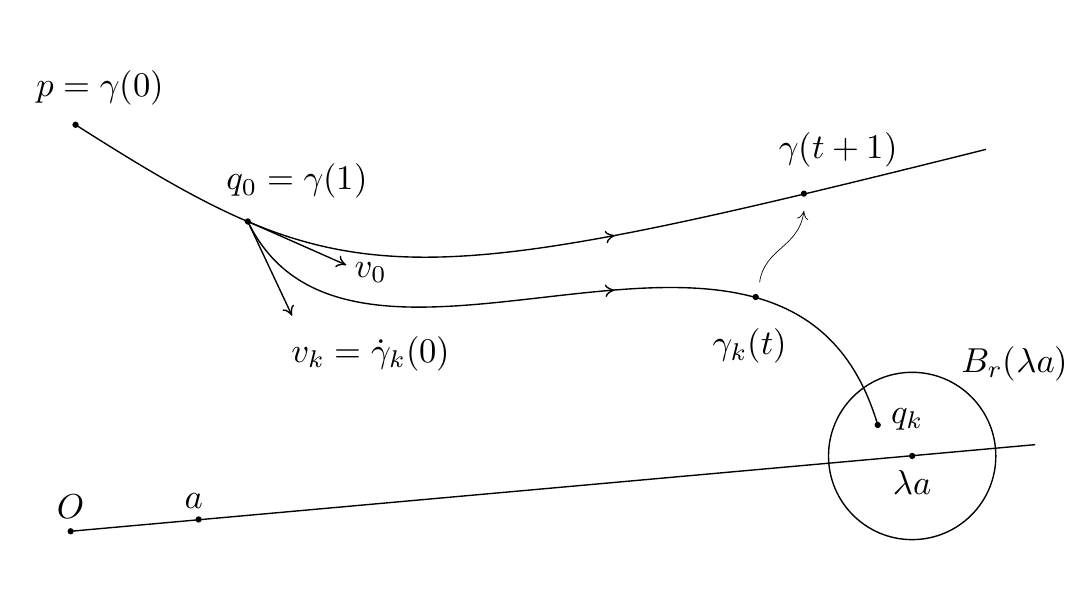}
\caption{The $C^1$ approximation of the curve $\gamma$ by
$h$-minimizers from $q_0$ to $q_k=p_{n_k}$.
Here $\lambda=\lambda_{n_k}$ and
$\norm{q_k-\lambda a}<r=o(\lambda)$.} 
\label{Proof33a}
\end{figure}

For simplicity,
in the rest of the proof we will call $\gamma$ the curve $\zeta$,
assuming then that the original curve $\gamma$ was
reparametrized to be defined on the interval $[-1,+\infty)$.
Making this abuse of notation we can then write
$\gamma_k(t)\to\gamma(t)$,
and $\dot\gamma_k(t)\to\dot\gamma(t)$,
uniformly on any compact interval $[0,T]$.

We continue now with the proof that the limit shape $b$
of $\gamma $ has no collisions.
We will make use of the function $\mu$ that we have mentioned
in Remark \ref{rmk-configur.measure} which is
called the configurational measure.
It is defined as the homogeneous function of degree zero
$\mu:E^N\setminus\set{0}\to\R^+$ given by
$\mu(x)=\norm{x}U(x)=U(\norm{x}^{-1}x)$,
that is to say, $\mu=UI^{1/2}$. 
Notice that $\mu(x)<+\infty$ if and only if $x\in\Omega$.

Under the assumption that $b$ has collisions,
we will construct a new sequence of curves
$\eta_k\in\calC(q_0,q_k)$ in such a way that
$\calA_{L+h}(\eta_k)<\calA_{L+h}(\gamma_k)$
for all $k$ big enough.
Since this contradicts the minimality of the curves $\gamma_k$
we will conclude that $b\in\Omega$.
The construction of the curves $\eta_k$ will be done in terms
of the polar components of the curves $\gamma_k$.
More precisely, for each $k>1$ we define the functions
\[
\rho_k:[0,t_k]\to\R^+,
\quad
\rho_k(t)=\norm{\gamma_k(t)}
\]
\[
\theta_k:[0,t_k]\to\S,
\quad
\theta_k(t)=\norm{\gamma_k(t)}^{-1}\gamma_k(t)
\]
where $\S=\set{x\in E^N\mid \inner{x}{x}=1}$ is the unit sphere
for the mass inner product. 
Thus, for each $k>0$ we can write $\gamma_k=\rho_k\theta_k$,
and the Lagrangian action in polar coordinates writes
\[
\calA_{L+h}(\gamma_k)=
\int_0^{t_k}\tfrac{1}{2}\;\dot\rho_k^{\,2}\,dt\,+
\int_0^{t_k}\tfrac{1}{2}\;\rho_k\,\dot\theta_k^{\,2}\,dt\,+
\int_0^{t_k}\rho_k^{-1}\,\mu(\gamma_k)\,dt\;+
ht_k.
\]
Assuming that $\mu(b)=+\infty$, we can find $\epsilon>0$
such that, if $\norm{x-b}<\epsilon$,
then $\mu(x)>3\mu(a)$.
On the other hand, since we have that $\gamma(t)=tb+o(t)$,
there is $T_0>0$ such that
$\norm{\gamma(t)t^{-1}-b}<\epsilon/2$ for all $t\geq T_0$.

We use now the approximation of $\gamma$ by the curves
$\gamma_k$. For each $T\geq T_0$ there is a positive integer
$k_T$ such that, if $k>k_T$,
then $t_k>T$ and
$\norm{\gamma_k(t)-\gamma(t)}<T_0\epsilon/2$
for all $t\in[T_0,T]$.
It follows that, for $k>k_T$ and for any $t\in[T_0,T]$
we have
\[
\norm{\frac{\gamma_k(t)}{t}-\frac{\gamma(t)}{t}}<
\frac{\epsilon}{2},
\]
and then $\norm{\gamma_k(t)t^{-1}-b}<\epsilon$.
In turn, since $\mu$ is homogeneous, this implies that
\[
\mu(\gamma_k(t))=
\mu(\gamma_k(t)t^{-1})>3\mu(a).
\]

Now we are almost able to define the sequence of curves
$\eta_k\in\calC(q_0,q_n)$.
Let us write $k_0$ for $k_{T_0}$.
For $k\geq k_0$ we know that
$\mu(\gamma_k(T_0))>3\mu(a)$.
Moreover, since the extreme $p_k$ of the
curve $\gamma_k$ lies in a ball $B_r(\lambda a)$ with
$r=o(\lambda)$, we can assume that $k_0$ is big enough
in order to have $\mu(p_k)<2\mu(a)$ for all $k\geq k_0$.
Then we define
\[
T_k=
\max\set{T\geq T_0\mid
\mu(\gamma_k(t))\geq 2\mu(a)
\text{ for all } t\in [T_0,T]},
\]
and $c_k=\theta_k(T_k)$.
Given $T>T_0$,
by the previous considerations we have that
$k>k_T$ implies $T_k>T$.
Thus,
we can take $T_k$ as large as we want
by choosing $k$ large enough.
The last ingredient for building the curve $\eta_k$
is a minimizer $\delta_k$ of $\calA_{L+h}$ in
$\calC(\gamma_k(T_0),\rho_k(T_0)c_k)$
whose existence is guaranteed by Theorem
\ref{lema-JM.geod.complet}.
Then we define $\eta_k$ as follows.
For $k<k_0$ we set $\eta_k=\gamma_k$.
For $k\geq k_0$ the curve $\eta_k$ is the concatenation
of the following four curves: (i) the restriction of $\gamma_k$
to $[0,T_0]$, (ii) the minimizer $\delta_k$ above defined,
(iii) the homothetic curve $\rho_k(t)c_k$ for $t\in [T_0,T_k]$,
and (iv) the restriction of $\gamma_k$ to $[T_k,t_k]$
(see Figure \ref{Proof33b}).

We will show that
$\Delta_k=\calA_{L+h}(\gamma_k)-\calA_{L+h}(\eta_k)>0$
for $k$ large enough.

We start by observing that the first and the last components
of $\eta_k$ are also segments of $\gamma_k$ so that
their contributions to $\Delta_k$ cancel each other out.

Also we have
\[
\calA_{L+h}(\gamma_k\mid_{[T_0,T_k]}) = 
\int_{T_0}^{T_k}\,\tfrac{1}{2}\,\dot\rho_k^{\,2}\,dt +
\int_{T_0}^{T_k}\,\tfrac{1}{2}\,\rho_k\dot\theta^{\,2}\,dt +
\int_{T_0}^{T_k}\,\rho_k^{-1}\mu(\gamma_k)\,dt +
h(T_k-T_0),
\]
and
\[
\calA_{L+h}(\,\rho_kc_k\mid_{[T_0,T_k]}) =
\int_{T_0}^{T_k}\,\tfrac{1}{2}\,\dot\rho_k^{\,2}\,dt +
\int_{T_0}^{T_k}\,\rho_k^{-1}\,2\mu(a)\,dt +
h(T_k-T_0).
\]
We recall that
$\mu(\gamma_k(t))\geq 2\mu(a)$ for all
$t\in [T_0,T_k]$.
Therefore, so far we can say that
\[
\Delta_k>
\int_{T_0}^{T_k}\rho_k^{-1}\,
\left(\mu(\gamma_k(t))-2\mu(a)\right)\,dt\;-\;
\calA_{L+h}(\delta_k).
\]
This part of the proof is essentially done.
To conclude we only need to establish estimates
for the two terms on the right side of the previous inequality.
More precisely,
we will prove that the the integral diverges as $k\to\infty$,
and that the second term is bounded as a function of $k$.

\begin{figure}[h]
\centering
\includegraphics[scale=1.1]{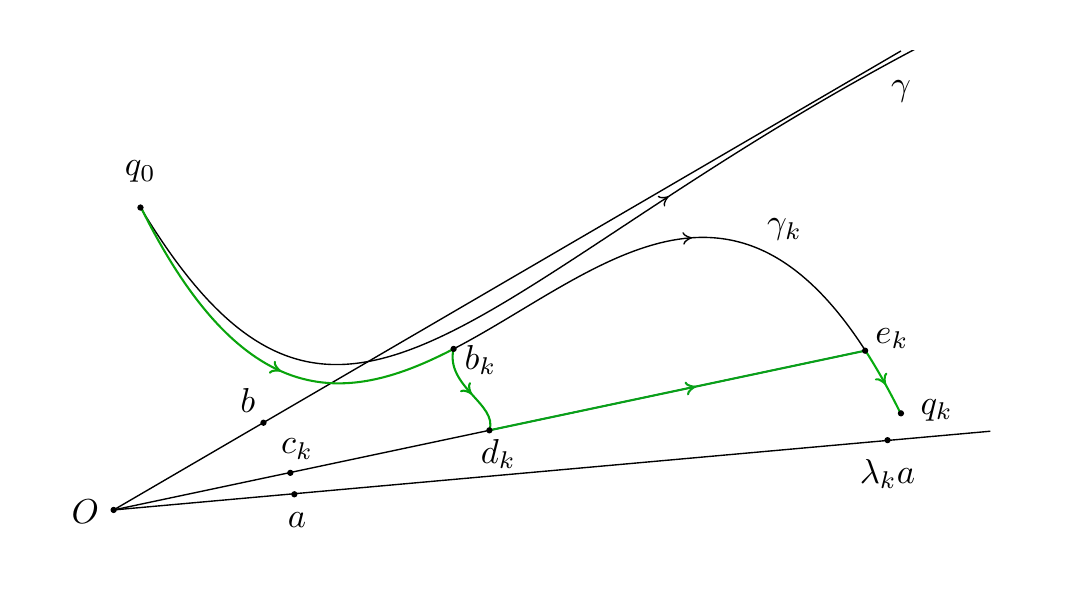}
\caption{For $k$ large enough, the $\calA_{L+h}$ action
of the green curve $\eta_k$
is less than that of the curve $\gamma_k$.
The intermediate points are 
$b_k=\gamma_k(T_0)$,
$d_k=\rho_k(T_0)c_k$,
and $e_k=\rho_k(T_k)c_k=\gamma_k(T_k)$.} 
\label{Proof33b}
\end{figure}

\begin{claim}
The sequence $\calA_{L+h}(\delta_k)$ is bounded.
\end{claim}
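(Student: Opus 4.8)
The plan is to observe that $\delta_k$ realises the potential $\phi_h$ between its endpoints, that these endpoints lie on a sphere about the origin whose radius stays bounded, and then to invoke Theorem \ref{thm-phih.estim}.

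First I would write $b_k=\gamma_k(T_0)$ and $d_k=\rho_k(T_0)c_k$, so that by the minimality of $\delta_k$ we have $\calA_{L+h}(\delta_k)=\phi_h(b_k,d_k)$. Both points have norm $\rho_k(T_0)=\norm{\gamma_k(T_0)}$: indeed $b_k=\rho_k(T_0)\theta_k(T_0)$ with $\theta_k(T_0)\in\S$, while $c_k=\theta_k(T_k)\in\S$. Hence
\[
\norm{b_k-d_k}=\rho_k(T_0)\,\norm{\theta_k(T_0)-c_k}\leq 2\,\rho_k(T_0).
\]

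The key point — and the reason there is no real obstacle here — is that $T_0$ is a \emph{fixed} time (chosen together with $\epsilon$), so the radius $\rho_k(T_0)$ does not blow up, in contrast with $\rho_k(T_k)$ which diverges because $T_k\to+\infty$. Indeed, by the $C^1$-approximation already established, $\gamma_k\to\gamma$ uniformly on the compact interval $[0,T_0]$, hence $\rho_k(T_0)\to\norm{\gamma(T_0)}<+\infty$; in particular $\rho_k(T_0)\leq C_0$ for some constant $C_0$ and all $k\geq k_0$, so that $\norm{b_k-d_k}\leq 2C_0$. Applying Theorem \ref{thm-phih.estim} to the pair $(b_k,d_k)$ then yields
\[
\calA_{L+h}(\delta_k)=\phi_h(b_k,d_k)\leq
\left(2\alpha\,C_0+4h\,\beta\,C_0^2\right)^{1/2}
\]
for all $k\geq k_0$, which is the assertion of the claim. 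The genuinely delicate half of the argument is the complementary one, namely that $\int_{T_0}^{T_k}\rho_k^{-1}\big(\mu(\gamma_k)-2\mu(a)\big)\,dt$ diverges as $k\to\infty$; that estimate will have to dominate the uniform bound just obtained.
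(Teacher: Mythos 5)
Your argument is correct and is essentially the paper's own: both proofs reduce to noting that the endpoints of $\delta_k$ have norm $\rho_k(T_0)\to\norm{\gamma(T_0)}$ (so they stay in a fixed ball), and then bounding $\calA_{L+h}(\delta_k)=\phi_h(b_k,d_k)$ via Theorem \ref{thm-phih.estim}. Your version is just slightly more explicit, using the estimate directly instead of appealing to the continuity of $\phi_h$ on a compact set.
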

\begin{proof}
Indeed, the curve $\delta_k$ is a minimizer of $\calA_{L+h}$
between curves binding two configurations of size
$\rho_k(T_0)$, and
\[
\rho_k(T_0)\to\rho(T_0)=\norm{\gamma(T_0)}
\]
as $k\to\infty$. Therefore there is $R>0$ such that
the endpoints of the curves $\delta_k$ are all contained
in the compact ball $B_R(0)\subset E^N$.
On the other hand, 
since by Theorem \ref{thm-phih.estim} we know that
the action potential $\phi_h$ is continuous, we can conclude that
$\sup \calA_{L+h}(\delta_k)<+\infty$.
\end{proof}

\begin{claim}
The sequence
$\int_{T_0}^{T_k}\rho_k^{-1}\,
\left(\mu(\gamma_k(t))-2\mu(a)\right)\,dt$
diverges as $k\to\infty$.
\end{claim}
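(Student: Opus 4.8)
The plan is to bound the integral below by its contribution over a \emph{fixed} subinterval $[T_0,T]$ of the moving interval $[T_0,T_k]$ — one on which both factors of the integrand can be controlled at the same time — and then to let $T\to+\infty$. So I first fix an arbitrary $M>0$ and reduce matters to showing that the integral exceeds $M$ for every $k$ large enough.

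Recall that for each $T>T_0$ there is an index $k_T$ such that $k>k_T$ implies $t_k>T$, $T_k>T$, and $\norm{\gamma_k(t)\,t^{-1}-b}<\epsilon$ for all $t\in[T_0,T]$. On this fixed interval the last inequality gives two things at once: the upper bound $\rho_k(t)=\norm{\gamma_k(t)}<(\norm{b}+\epsilon)\,t$, and, by the zero-homogeneity of $\mu$, the lower bound $\mu(\gamma_k(t))=\mu(\gamma_k(t)\,t^{-1})>3\mu(a)$, whence $\mu(\gamma_k(t))-2\mu(a)>\mu(a)$. Since on all of $[T_0,T_k]$ one has $\mu(\gamma_k(t))\ge 2\mu(a)$ by the definition of $T_k$, the integrand is non-negative there, so for $k>k_T$ I may discard the part over $[T,T_k]$ and estimate
\[
\int_{T_0}^{T_k}\rho_k^{-1}\left(\mu(\gamma_k(t))-2\mu(a)\right)dt\;\ge\;\int_{T_0}^{T}\frac{\mu(a)}{(\norm{b}+\epsilon)\,t}\,dt\;=\;\frac{\mu(a)}{\norm{b}+\epsilon}\,\log\frac{T}{T_0}\,.
\]
Choosing $T$ so large that the right-hand side exceeds $M$ then finishes the argument, since the bound holds for every $k>k_T$.

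I do not expect a genuine obstacle here: this is a logarithmic-divergence estimate, and everything needed has already been arranged in the preceding paragraphs. The one point to keep straight is that the strong bound $\mu(\gamma_k(t))>3\mu(a)$ is available only on the \emph{fixed} interval $[T_0,T]$ — it rests on $\gamma_k\to\gamma$ being close on a compact set, hence requires $k>k_T$ — and not on the moving interval $[T_0,T_k]$; it is precisely the gap between $3\mu(a)$ and $2\mu(a)$ that survives the subtraction of $2\mu(a)$ and leaves a fixed positive factor $\mu(a)$, which, combined with the at-most-linear growth $\rho_k(t)<(\norm{b}+\epsilon)\,t$ inherited from $\gamma(t)=tb+o(t)$, produces exactly the $\log(T/T_0)$ blow-up — and a logarithmic rate, although slow, is all that is required.
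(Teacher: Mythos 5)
Your proof is correct and follows essentially the same route as the paper: restrict to a fixed compact interval $[T_0,T]$ where $\mu(\gamma_k)>3\mu(a)$, use nonnegativity of the integrand on $[T,T_k]$ (from the definition of $T_k$) to discard that piece, and extract a logarithmic divergence from $\rho_k(t)^{-1}\gtrsim 1/t$. The only (harmless) difference is that you bound $\rho_k(t)<(\norm{b}+\epsilon)\,t$ directly from $\norm{\gamma_k(t)t^{-1}-b}<\epsilon$, whereas the paper bounds $\rho(t)<\alpha t+\beta$ for the limit curve and transfers the estimate to $\rho_k$ via uniform convergence on $[T_0,T]$.
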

\begin{proof}
In order to get a lower bound for the integral of $\rho_k^{-1}$,
we make the following considerations.
We note first that $\rho(t)=\norm{\gamma(t)}< \alpha t+\beta$
for some constants $\alpha, \beta>0$.
This is because we know that
$\gamma(t)=tb+o(t)$ as $t\to +\infty$.
Thus we have that for any $T>T_0$
\[
\int_{T_0}^T\rho^{-1}dt \;\geq\;
\log(\alpha T+\beta)-\log(\alpha T_0 +\beta).
\]
Therefore, for any choice of $K>0$ there is $T>0$ such that
the integral at the left side is bigger than
$\mu(a)^{-1}K$.

On the other hand,
since for $k>k_T$ we have that $T_k>T$, and since
$\gamma_k(t)$ uniformly converges to $\gamma(t)$
on $[T_0,T]$, we can assume that we have
$\mu(\gamma_k(t))>3\mu(a)$ for all $t\in [T_0,T]$
and then,
neglecting the part of the integral between $T$ and $T_k$
which is positive, to conclude that
\[
\int_{T_0}^{T_k}\rho_k^{-1}dt\,
\left(\mu(\gamma_k(t))-2\mu(a)\right)\,dt \;>
\;\mu(a)\int_{T_0}^{T}\rho_k^{-1}dt \;>\;K
\]
for every $k$ sufficiently large.
\end{proof}
It follows that for large values of $k$ the difference $\Delta_k$ is
positive, meaning that the corresponding curves $\gamma_k$
are not $h$-minimizers because the curves $\eta_k$ have
smaller action.
Therefore we have proved by contradiction that $b\in\Omega$.

The last step to finish the proof is to show that $b=\lambda a$
for some $\lambda >0$.
If not, we can choose two disjoint cones $C_a$ and $C_b$
in $E^N$, centered at the origin and with axes directed by
the configurations $a$ and $b$ respectively.
Since we know that $b\in\Omega$,
we can apply Chazy's Lemma to get that for $k$ large enough
the curves $\gamma_k$ are defined for all $t>0$, and that there
is $T^*>0$ for which we must have $\gamma_k(t)\in C_b$
for all $t>T^*$ and any $k$ large enough.
But this produces a contradiction, because we know that
$q_k=\gamma_k(t_k)=\lambda_{n_k}a+o(\lambda_{n_k})$
as $k\to\infty$, which
forces to have $q_k\in C_a$ for $k$ large enough.
\end{proof}

\section{The Jacobi-Maupertuis distance for nonnegative energy}
\label{s-jm.dist}

In this section we develop the geometric viewpoint
and we show, for $h\geq 0$, that when restricted to $\Omega$
the action potential $\phi_h$ is exactly the Riemannian distance
associated to the Jacobi-Maupertuis metric $j_h=2(h+U)g_m$,
where $g_m$ is the mass scalar product.
Moreover,
we will see that the metric space $(E^N,\phi_h)$
is the completion of $(\Omega,j_h)$.
The fact that $\phi_h$ is a distance over $E^N$ is a
straightforward consequence of the definition and of
Lemma \ref{lema-geod.are.lipschitz} or
Lemma \ref{lema-compar.pot.action.kepler}
depending on whether $h>0$ or $h=0$.  
It is also immediate to see that $(E^N,\phi_h)$ is a length space,
that is to say $\phi_h$ coincides with the induced length distance.
From now on,
we denote by $\calL_h(\gamma)$ the Riemannian length
of a $C^1$ curve $\gamma$,
and we denote by $d_h$ the Riemannian distance on $\Omega$.
 
\begin{proposition}
For all $h\geq 0$,
the space $(E^N,\phi_h)$ is the completion of $(\Omega,d_h)$.    
\end{proposition}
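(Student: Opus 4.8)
The plan is to verify the three ingredients that together say exactly that $(E^N,\phi_h)$ is the metric completion of $(\Omega,d_h)$: first, that $\phi_h$ restricted to $\Omega\times\Omega$ coincides with the Riemannian distance $d_h$ (so that $\Omega\hookrightarrow E^N$ is an isometric embedding); second, that $\Omega$ is $\phi_h$-dense in $E^N$; and third, that $(E^N,\phi_h)$ is a complete metric space.

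For the identification $\phi_h=d_h$ on $\Omega$ I would invoke the Maupertuis principle in its elementary form. The pointwise AM--GM inequality gives, along any $C^1$ curve $\gamma$ contained in $\Omega$,
\[
\tfrac12\norm{\dot\gamma}^2+U(\gamma)+h\;\ge\;\sqrt{2\bigl(U(\gamma)+h\bigr)}\,\norm{\dot\gamma},
\]
with equality precisely when $\tfrac12\norm{\dot\gamma}^2=U(\gamma)+h$; integrating yields $\calA_{L+h}(\gamma)\ge\calL_h(\gamma)$ for every parametrization, and a standard energy-$h$ reparametrization (always possible since $U>0$, $h\ge 0$) turns this into an equality. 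Minimizing over piecewise $C^1$ curves in $\Omega$ from $x$ to $y$ gives $\phi_h(x,y)\le d_h(x,y)$. For the reverse inequality (and $x\ne y$; the case $x=y$ is trivial), take the minimizer $\gamma$ of $\calA_{L+h}$ in $\calC(x,y)$ provided by Lemma \ref{lema-JM.geod.complet}: by Marchal's Theorem it is collision-free on the open interval, and since both endpoints lie in the open set $\Omega$ and $\gamma$ conserves energy $h$ (so $\norm{\dot\gamma}^2=2(h+U(\gamma))$ stays bounded near the endpoints, whence $\dot\gamma$ is Lipschitz there because $\nabla U$ is bounded near $x$ and $y$), the curve $\gamma$ is in fact $C^1$ and contained in $\Omega$. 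Therefore $\phi_h(x,y)=\calA_{L+h}(\gamma)\ge\calL_h(\gamma)\ge d_h(x,y)$.

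Density of $\Omega$ is immediate from Theorem \ref{thm-phih.estim}: since $\phi_h(x,y)\le\bigl(\alpha\norm{x-y}+h\beta\norm{x-y}^2\bigr)^{1/2}\to 0$ as $\norm{x-y}\to 0$, and $\Omega$ is Euclidean-dense in $E^N$, it is $\phi_h$-dense. For completeness let $(x_n)$ be a $\phi_h$-Cauchy sequence. When $h>0$, Lemma \ref{lema-geod.are.lipschitz} gives $\sqrt{2h}\,\norm{x-y}\le\phi_h(x,y)$, so $(x_n)$ is Euclidean-Cauchy; by completeness of $E^N$ it converges to some $x\in E^N$, and Theorem \ref{thm-phih.estim} then gives $\phi_h(x_n,x)\to 0$. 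When $h=0$ this linear lower bound is empty, and here I would instead extract from the comparison with the Kepler action (Lemma \ref{lema-compar.pot.action.kepler}) a local lower bound of the Euclidean distance in terms of $\phi_0$, enough to show that a $\phi_0$-Cauchy sequence stays in a Euclidean-bounded set and is Euclidean-Cauchy; then the same conclusion follows, again closing up with Theorem \ref{thm-phih.estim}.

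The main obstacle is precisely the $h=0$ case of completeness, where one cannot simply read off Euclidean Cauchyness from $\phi_0$ and must exploit the finer comparison lemma to rule out escape to infinity and to recover Euclidean convergence. A second, more routine point requiring care is the endpoint regularity of the $\phi_h$-minimizer used in the identification step — that a Tonelli minimizer with both endpoints in $\Omega$ is $C^1$ up to the boundary — which, as indicated above, follows from energy conservation together with the local boundedness of $\nabla U$.
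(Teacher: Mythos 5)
Your proof is correct and follows essentially the same route as the paper: the pointwise comparison $\calL_h(\gamma)\le\calA_{L+h}(\gamma)$ with equality exactly at energy $h$, Marchal's Theorem plus endpoint regularity of the $h$-minimizer to get $d_h\le\phi_h$, an energy-$h$ reparametrization to get $\phi_h\le d_h$, and completeness from Lemma \ref{lema-geod.are.lipschitz} (for $h>0$) or Lemma \ref{lema-compar.pot.action.kepler} (for $h=0$) combined with the upper bound of Theorem \ref{thm-phih.estim}. The only cosmetic difference is in the inequality $\phi_h\le d_h$: you reparametrize the competitor curve directly (which needs the small observation that the energy-$h$ reparametrization of a piecewise $C^1$ curve with possibly vanishing speed is still absolutely continuous), whereas the paper first replaces it by a concatenation of arclength-parametrized minimizing geodesics, which sidesteps that issue.
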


\begin{proof}
In the case $h>0$,
the fact that $(E^N,\phi_h)$ is a complete length space
comes directly from the definition of $\phi_h$
and from Lemma \ref{lema-geod.are.lipschitz}
and Theorem \ref{thm-phih.estim}.
Moreover, we have that $\phi_h$ generates the topology of $E^N$
and that $\Omega$ is thus a dense subset.

For the case $h=0$ the argument is exactly the same,
but instead of Lemma \ref{lema-geod.are.lipschitz},
which becomes meaningless,
we have to use Lemma \ref{lema-compar.pot.action.kepler} below.

The proof will be achieved now by showing that
the inclusion of $\Omega$ into  $E^N$ is an isometry,
that is to say,
that $\phi_h$ coincides with $d_h$ when restricted to $\Omega$.
Given $(x,v)\in \Omega\times E^N$,
we have
\[
\norm{v}_h=j_h(x)(v,v)^{1/2}\leq L(x,v)+h
\]
with equality if and only if $\calE(x,v)=h$,
where $\calE(x,v)=\frac{1}{2}\norm{v}^2-U(x)$
is the energy function in $T\Omega$.
It follows that if $\gamma$ is an absolutely continuous curve in
$\Omega$ it holds $\calL_h(\gamma)\leq A_{L+h}(\gamma)$,
with equality if and only if
$\calE(\gamma(t),\dot{\gamma}(t))=h$  for almost all $t$.
Given now $x,y\in\Omega$,
by Marchal's Theorem any $h$-minimizer joining $x$ to $y$
is a genuine motion,
in particular it is a $C^1$ curve.
Since $d_h$ is defined as the infimum of $\calL_h(\gamma)$
over all $\calC^1$ curves in $\Omega$ joining $x$ to $y$,
we have that $d_h(x,y)\leq\phi_h(x,y)$.

In order to prove the converse inequality,
let $\epsilon>0$ and $\gamma:[0,1]\to \Omega$
be a $\calC^1$ curve joining $x$ to $y$ such that
$\calL_h(\gamma)\leq d_h(x,y)+\epsilon$.
We can now find a finite sequence $0=t_0<...<t_N=1$
such that for any $i=1,...,N$ the points
$\gamma(t_{i-1})$ and $\gamma(t_i)$ can be joined by a
minimizing geodesic in $\Omega$, here denoted $\sigma_i$.
We will assume that each $\sigma_i$ is parametrized by arclength,
thus $\dot{\sigma}_i(t)\neq 0$ for all $t$.
Let us reparametrize now each $\sigma_i$ so that,
denoting $\delta_i$ the reparametrization,
we have $\calE(\delta_i(t),\dot\delta_i(t))=h$ for all $t$,
and let $\delta$ be the concatenation of all $\delta_i$. 
By construction
\[
\phi_h(x,y)\leq A_{L+h}(\delta)=
\calL_h(\delta)\leq
\calL_h(\gamma)\leq
d_h(x,y)+\epsilon,
\]  
and by arbitrariness of $\epsilon$ we conclude that
$\phi_h(x,y)\leq d_h(x,y)$.
\end{proof}  

\begin{lemma}
\label{lema-compar.pot.action.kepler}
There exist a constant $\mu_0>0$ such that
for all $x,y\in E^N$ satisfying $x\neq y$, we have
\[
\phi_0(x,y)\geq \frac{\mu_0}{\rho}\norm{x-y}
\]
where
$\rho=\max\set{\norm{x},\norm{y}}^\frac{1}{2}$.
\end{lemma}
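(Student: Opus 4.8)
\textit{Proof plan.} The aim is to show $\calA_L(\gamma)\ge(\mu_0/\rho)\norm{x-y}$ for \emph{every} $\gamma\in\calC(x,y)$, which upon taking the infimum defining $\phi_0$ gives the assertion; when $\calA_L(\gamma)=+\infty$ there is nothing to prove, so I fix a curve $\gamma:[0,\tau]\to E^N$ from $x$ to $y$ of finite action. Two elementary ingredients enter. First, the Maupertuis inequality: by the arithmetic--geometric mean inequality, $\tfrac{1}{2}\norm{\dot\gamma(t)}^2+U(\gamma(t))\ge\sqrt{2U(\gamma(t))}\,\norm{\dot\gamma(t)}$ for a.e.\ $t$, hence $\calA_L(\gamma)\ge\int_0^\tau\sqrt{2U(\gamma)}\,\norm{\dot\gamma}\,dt$, the Jacobi--Maupertuis length of $\gamma$ at energy zero. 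Second, a pointwise lower bound for the potential: since $U$ is positive and homogeneous of degree $-1$, the quantity $\norm{z}\,U(z)=U(\norm{z}^{-1}z)$ is bounded below by a positive constant on $E^N\setminus\set{0}$ --- indeed $\norm{r_i}_E^2\le m_i^{-1}\norm{z}^2$ gives $r_{ij}\le 2(\min_k m_k)^{-1/2}\norm{z}$ for every pair, so $U(z)\ge m_1m_2\,r_{12}^{-1}\ge c_0\norm{z}^{-1}$ for a constant $c_0>0$ depending only on the masses. Combining the two, $\calA_L(\gamma)\ge\sqrt{2c_0}\int_0^\tau\norm{\gamma(t)}^{-1/2}\norm{\dot\gamma(t)}\,dt$.

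It remains to bound this weighted Euclidean length below by a multiple of $\norm{x-y}/\rho=\norm{x-y}/\sqrt{r}$, with $r=\max\set{\norm{x},\norm{y}}$. Since the weight $\norm{z}^{-1/2}$ degenerates as $\norm{z}\to+\infty$, this is the one place that genuinely needs an argument, and I regard it as the (modest) crux. I would split into two cases. If $\norm{\gamma(t)}\le 2r$ for all $t$, then the weight is at least $(2r)^{-1/2}$, so $\int_0^\tau\norm{\gamma}^{-1/2}\norm{\dot\gamma}\,dt\ge(2r)^{-1/2}\int_0^\tau\norm{\dot\gamma}\,dt\ge(2r)^{-1/2}\norm{x-y}$. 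If instead $\gamma$ leaves the ball $B(0,2r)$, let $s_1$ be the first time with $\norm{\gamma(s_1)}=2r$; the arc $\gamma\mid_{[0,s_1]}$ stays in $\overline{B}(0,2r)$ and has Euclidean length at least $\norm{\gamma(s_1)}-\norm{\gamma(0)}\ge 2r-r=r$, so $\int_0^\tau\norm{\gamma}^{-1/2}\norm{\dot\gamma}\,dt\ge(2r)^{-1/2}\,r=\sqrt{r/2}$, and since $\norm{x-y}\le\norm{x}+\norm{y}\le 2r$ this is again at least $\norm{x-y}/(2\sqrt{2}\,\sqrt{r})$.

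In either case $\int_0^\tau\norm{\gamma(t)}^{-1/2}\norm{\dot\gamma(t)}\,dt\ge\norm{x-y}/(2\sqrt{2}\,\rho)$, whence $\calA_L(\gamma)\ge\tfrac{1}{2}\sqrt{c_0}\,\norm{x-y}/\rho$; taking the infimum over $\gamma\in\calC(x,y)$ gives $\phi_0(x,y)\ge(\mu_0/\rho)\norm{x-y}$ with $\mu_0=\tfrac{1}{2}\sqrt{c_0}$, as required. (Alternatively, the Maupertuis step can be bypassed: on the arc of $\gamma$ contained in $\overline{B}(0,2r)$ one combines the Cauchy--Schwarz lower bound of Remark \ref{rmk-usual.bound} with $U(\gamma)\ge c_0(2r)^{-1}$ and optimizes in the elapsed time, exactly as in the proof of Lemma \ref{lema-geod.are.lipschitz}.)
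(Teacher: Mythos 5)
Your proof is correct, but it follows a genuinely different route from the paper's. The paper starts from the same pointwise bound $U(z)\geq U_0\,\norm{z}^{-1}$ (with $U_0=\min_{\S}U>0$, which is your $c_0$ up to a constant), but instead of estimating the action of an arbitrary curve it compares $\phi_0$ with the free-time action potential $\Phi_0$ of the auxiliary Kepler problem with Lagrangian $L_\kappa(x,v)=\tfrac12\norm{v}^2+U_0\norm{x}^{-1}$: it takes a free-time minimizer $\gamma$ of $\calA_{L_\kappa}$ in $\calC(x,y)$, uses that such a minimizer is a zero-energy Keplerian parabolic arc, so that $\max_t\norm{\gamma(t)}=\max\set{\norm{x},\norm{y}}$, and then Cauchy--Schwarz on the kinetic part plus minimization in the elapsed time yields $\phi_0(x,y)\geq\Phi_0(x,y)\geq\sqrt{2U_0}\,\norm{x-y}/\rho$. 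Your argument replaces this dynamical input (existence of the Kepler free-time minimizer and the endpoint property of the radius along a parabolic arc) by the Maupertuis/AM--GM inequality together with the exit-the-ball-of-radius-$2r$ dichotomy, which controls the degenerating weight $\norm{\gamma}^{-1/2}$ for an arbitrary finite-action curve; the two-case estimate is correct (in the exit case $2r\geq\norm{x-y}$ converts $\sqrt{r/2}$ into the required bound). What you lose is only a slightly worse constant ($\tfrac12\sqrt{c_0}$ versus $\sqrt{2U_0}$), which is irrelevant for the statement; what you gain is a curve-by-curve, self-contained proof that needs no existence theorem for the comparison problem and no Kepler geometry, and that works verbatim for any positive potential bounded below by $c\,\norm{z}^{-1}$. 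Your parenthetical alternative (Cauchy--Schwarz and optimization in time on the sub-arc inside $\overline{B}(0,2r)$, as in Lemma \ref{lema-geod.are.lipschitz}) is closest in spirit to the paper's final computation, with your dichotomy playing exactly the role that the parabolic-arc property plays there.
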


\begin{proof}
The main idea of the proof is to estimate $\phi_0$
by comparing it with the action of some Kepler problem in $E^N$.
Since $U$ is a continuous function with values in $(0,+\infty]$,
the minimum of $U$ on the unit sphere of $E^N$,
here denoted $U_0$, is strictly positive.
Thus, by homogeneity of the potential,
if $x$ is any nonzero configuration we have
\[
U(x)=\frac{1}{\norm{x}}\,U\left(\frac{x}{\norm{x}}\right)
\geq \frac{U_0}{\norm{x}}.
\]
Let us consider now the Lagrangian function
associated to the Kepler problem in $E^N$
with potential $U_0/\norm{x}$, that is to say 
\[
L_{\kappa}(x,v)=
\frac{1}{2}\norm{v}^2+\frac{U_0}{\norm{x}}\;.
\]
By the previous inequality we know that $L_\kappa(x,v) \le L(x,v)$.
The critical action potential associated to $L_\kappa$
is defined on $E^N\times E^N$ by
\[
\Phi_0(x,y)=
\min\set{\calA_{L_\kappa}(\gamma)\mid \gamma\in\calC(x,y)},
\]
and it follows immediately from the definition that
$\Phi_0(x,y)\le \phi_0(x,y)$.
Assume now $x\neq y$, and let
$\gamma : [0,\tau]\rightarrow E^N$ be a free-time minimizer for
$\calA_{L_\kappa}$ in $\calC(x,y)$.
Thus $\gamma$ is an absolutely continuous curve satisfying
$\calA_{L_\kappa}(\gamma)=\Phi_0(x,y)$.
As a zero energy motion of the Kepler problem,
we know that $\gamma$ is an arc of Keplerian parabola,
and in particular we know that
\[
\max_{t\in[0,\tau]}\norm{\gamma(t)}=
\max\set{\norm{x},\norm{y}}
\]
which in turn implies that
\[
\frac{U_0}{\norm{\gamma(t)}}\,\geq\,
\frac{U_0}{\rho^2}
\]
for all $t\in[0,\tau]$. 
Thus, using this lower bound and Cauchy-Schwarz inequality
for the kinetic part of the action of $\gamma$ we deduce that 
$\Phi_0(x,y)\geq g(\tau)$,
where $g:\R^+\to\R$ is the function defined by 
\[
g(s)=
\frac{\norm{x-y}^2}{2s}+\frac{U_0}{\rho^2}\,s.
\]
Observing now that $g$ is convex and proper,
and replacing $g(\tau)$ in the previous inequality
by the minimum of $g(s)$ for $s>0$, we obtain
\[
\phi_0(x,y)\geq
\Phi_0(x,y)\geq
\frac{\mu_0}{\rho}\,\norm{x-y}.
\]
for $\mu_0=\sqrt{2U_0}$.
\end{proof} 

Now we have all the necessary elements to give
the proof of the corollary stated in Sect. \ref{s-geom.view}.
We have to prove that if two geodesic rays have the same
asymptotic limit, then they are equivalent in the sense of
having bounded difference.

\begin{proof}
[Proof of Corollary \ref{cor-Gr.boundary}]
Let $\gamma:[0,+\infty)\to E^N$ be a geodesic ray 
of the distance $\phi_h$, with $h>0$.
We assume that $\gamma(s)=sa+o(s)$
as $s\to +\infty$ for some $a\in\Omega$.
Thus, we know that $\gamma(s)$ is without collisions
for all $s$ sufficiently big. 
By performing a time translation we can assume that
$\gamma(s)\in \Omega$ for all $s\geq 0$, hence that
$\gamma$ is a geodesic ray of the
Jacobi-Maupertuis metric $j_h$ in $\Omega$.
Now we know that $\gamma$ admits a factorization
$\gamma(s)=x(t_\gamma(s))$ where
$x(t)$ is a motion of energy $h$.
More precisely,
the inverse of the new parameter $t_\gamma$
is a function $s_x$ satisfying $x(t)=\gamma(s_x(t))$.
Since $\gamma$ is arclength parametrized,
we have $\norm{\dot\gamma(s)}_h=1$ for all $s\geq 0$,
and we deduce that $s_x$ is the solution
of the differential equation 
\begin{equation}\tag{$\star$}\label{eq-edo.reparam}
\dot s_x(t)=2h+2U(\gamma(s_x(t)))
\end{equation}
with intial condition $s_x(0)=0$.
This implies that $s_x(t)\to +\infty$ and $\dot s_x(t)\to 2h$
as $t\to +\infty$,
hence we also have $s_x(t)=2ht+o(t)$
and $x(t)=2ht\,a+o(t)$ as $t\to +\infty$.
In particular $x(t)$ is a hyperbolic motion.
We claim now that
\[
s_x(t)=2ht+\frac{U(a)}{h}\log t+O(1).
\]
and the proof is as follows.
From (\ref{eq-edo.reparam}) we have, for $t>1$,
\begin{equation}\tag{$\star\star$}\label{eq-sx.asymptotic}
s_x(t)=2ht+\int_0^1 2U(x(\nu))\,d\nu+\int_1^t 2U(x(\nu))\,d\nu.
\end{equation}
On the other hand, by Chazy's Theorem we have that
\[
x(t)=
2ht\,a-\frac{\log t}{4h^2}\,\nabla U(a)+O(1).
\]
We observe then that
\begin{eqnarray*}
U(x(\nu))&=&
\frac{1}{2h\,\nu}\,U\left(a +O\left(\frac{\log \nu}{\nu}\right)\right)\\
& &\\
&=&\frac{U(a)}{2h}\,\frac{1}{\nu}+
O\left(\frac{\log \nu}{\nu^2}\right)
\end{eqnarray*}
Now the claim can be verified by replacing this last expression of
$U(x(\nu))$ in the last term of (\ref{eq-sx.asymptotic}).

Given now another gedodesic ray
$\sigma : [0,+\infty)\to E^N$,
denoting $\sigma(s)=y(t_\sigma(s))$
the reparametrization such that $y(t)$ is a motion
of energy constant $h$,
and denoting $s_y$ the inverse of $t_\sigma$,
it is clear from the previous asymptotic estimates that the
difference $s_x(t)-s_y(t)$ is bounded.
Since the derivative of $s_x$ and $s_y$
are both bounded below by the same positive constant,
we easily conclude that
$t_\gamma(s)-t_\sigma(s)$ is also bounded.
By replacing in the asymptotic expansion of $x(t)$ and $y(t)$
we find that $\gamma(s)-\sigma(s)$ is bounded. 
\end{proof}

\section{Open questions on bi-hyperbolic motions}
\label{s-bi.hyp}

We finish with some general open questions.
They are closely related to the recent advances made by
Duignan {\it et al.} \cite{DuMoMoYu}
in which the authors show in particular that
the limit shape map $(x,v)\mapsto (a^-,a^+)$
defined below is actually real analytic.

We define bi-hyperbolic motions
as those which are defined for all $t\in\R$,
and are hyperbolic both in the past and in the future.
The orbits of these entire solutions
define a non-empty open set in the phase space,
namely the intersection of the two open set
\[\calH=\calH^ +\cap\calH^-\]
where
$\calH^+\subset T\Omega=\Omega\times E^N$
is the set of the initial conditions giving rise
to hyperbolic motions in the future, and
$\calH^-=\set{(x,v)\in T\Omega\mid (x,-v)\in\calH^+}$
is the set of the initial conditions
giving rise to hyperbolic motions in the past.
Newton's equations define a complete vector field
in the open set $\calH\subset\Omega\times E^N$.
We will denote by $\varphi^t$ the corresponding flow
and $\pi:\Omega\times E^N\to\Omega$
the projection onto the first factor.
 
We also note that this open and completely invariant set
has a natural global section,
given by the section of \emph{perihelia}:
\[\calP=\calH\cap\set{(x,v)\in T\Omega\mid \inner{x}{v}=0}\,.\]

\begin{proposition}
The flow $\varphi^t$ in $\calH$
is conjugated to the shift in $\calP\times\R$.
\end{proposition}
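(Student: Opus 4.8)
The plan is to exhibit the perihelion section $\calP$ as a \emph{global} cross-section of the flow $\varphi^t$ on $\calH$, and then run the standard suspension argument. The only substantive input is the Lagrange--Jacobi identity: along any motion $x(t)$ of energy constant $h$ one has $\ddot I=4h+2U$, where $I(t)=\inner{x(t)}{x(t)}$ is the moment of inertia with respect to the origin (this is the identity recalled above; it comes from $\inner{x}{\nabla U(x)}=-U(x)$ together with $\norm{\dot x}^2=2h+2U$). Since hyperbolic motions have $h>0$ and $U>0$, every bi-hyperbolic motion --- which by definition is defined on all of $\R$ --- satisfies $\ddot I(t)=4h+2U(t)>4h>0$ for every $t\in\R$. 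Hence, writing $\varphi^t(x,v)=(x(t),\dot x(t))$, the function $g(t)=\inner{x(t)}{\dot x(t)}=\tfrac12\dot I(t)$ has derivative $g'(t)=2h+U(t)\ge 2h>0$, so $g$ is a strictly increasing real-analytic diffeomorphism of $\R$ onto $\R$; in particular it has a unique zero. Translated into the phase space: every orbit of $\varphi^t$ in $\calH$ meets $\set{(x,v)\mid\inner{x}{v}=0}$ at exactly one time, and it meets it transversally since that derivative equals $2h+U>0$ there. Thus $\calP$ is a global cross-section.

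Next I would define $\tau:\calH\to\R$ to be the unique time with $\varphi^{\tau(x,v)}(x,v)\in\calP$, i.e. the zero of $g_{(x,v)}$. Because Newton's equations define an analytic flow on $T\Omega$ and $\partial_t g\neq 0$ at its zero, the implicit function theorem makes $\tau$ real analytic on $\calH$. I then set
\[
\Psi:\calP\times\R\longrightarrow\calH,\qquad \Psi(w,s)=\varphi^s(w),
\]
which is well defined because $\calP\subset\calH$ and $\calH$ is invariant under the complete flow $\varphi^t$, and
\[
\Phi:\calH\longrightarrow\calP\times\R,\qquad \Phi(x,v)=\bigl(\varphi^{\tau(x,v)}(x,v),\,-\tau(x,v)\bigr),
\]
which is analytic since both $\varphi$ and $\tau$ are.

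It remains to check that $\Phi$ and $\Psi$ are mutually inverse and that $\Psi$ intertwines the flows. That $\Psi\circ\Phi=\mathrm{id}_{\calH}$ is immediate from the definitions. For $\Phi\circ\Psi$: if $w\in\calP$ then $g_w(0)=\inner{x}{v}=0$, so by uniqueness of the zero $\tau(w)=0$; and by the flow property $\tau(\varphi^s(w))=-s$, whence $\Phi(\Psi(w,s))=\Phi(\varphi^s(w))=(\varphi^{-s}\varphi^s(w),s)=(w,s)$. Therefore $\Psi$ is an analytic diffeomorphism. Finally, for the shift $\psi^t(w,s)=(w,s+t)$ on $\calP\times\R$ one has
\[
\Psi(\psi^t(w,s))=\varphi^{s+t}(w)=\varphi^t(\varphi^s(w))=\varphi^t(\Psi(w,s)),
\]
so $\Psi$ conjugates $\psi^t$ to $\varphi^t$, which proves the proposition.

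As for difficulties: there really is no hard step --- everything reduces to the monotonicity of $\dot I$, which is where bi-hyperbolicity enters, and only through the two facts that $h>0$ and that the orbit is entire (so that $\ddot I\ge 4h$ already forces $\dot I(t)\to\pm\infty$, hence a unique zero); no finer asymptotics of hyperbolic motions are needed. One may add, if desired, that $\calP$ is genuinely a real-analytic codimension-one submanifold transverse to the flow, since the differential of $(x,v)\mapsto\inner{x}{v}$ on $T\Omega$, namely $(\delta x,\delta v)\mapsto\inner{\delta x}{v}+\inner{x}{\delta v}$, never vanishes ($x\in\Omega$ forces $x\neq 0$). If $\calH=\varnothing$ the statement is vacuous.
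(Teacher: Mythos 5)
Your proof is correct and follows essentially the same route as the paper's: both rest entirely on the Lagrange--Jacobi identity $\ddot I = 4h+2U > 4h > 0$ (valid because bi-hyperbolic orbits are entire and have $h>0$), which makes $\dot I$ strictly increasing and surjective onto $\R$, hence gives a unique perihelion time, and then one reads off the conjugacy with the shift. The paper states this more tersely ($I$ is proper and strictly convex, so there is a unique $t_p$) and omits the routine suspension bookkeeping; you also add the useful but inessential observations that $\calP$ is a real-analytic hypersurface transverse to the flow and that $\tau$ is analytic by the implicit function theorem.
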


\begin{proof}
Given $(x_0,v_0)\in\calH$,
let $x(t)=\pi(\varphi^t(x_0,v_0))$ be
the generated bi-hyperbolic motion.
Since $I=\inner{x}{x}$,
it follows from the Lagrange-Jacobi identity $\ddot I=4h+2U$,
that $I$ is a proper and strictly convex function.
Thus, there is a unique $t_p\in\R$ such that 
$\varphi^{t_p}(x_0,v_0)\in\calP$.
Moreover,
the sign of $\dot I=\inner{x}{\dot x}$ is the sign of $t-t_p$
and $\norm{x(t)}$ reaches its minimal value at $t=t_p$.
The conjugacy is thus given by the map
$(x_0,v_0)\mapsto (p(x_0,v_0),-t_p)$,
where $p:\calH\to\calP$ gives the phase point at perihelion
$p(x_0,v_0)=(x(t_p),\dot x(t_p))$.
\end{proof}

Naturally associated with each bi-hyperbolic motion,
there is the pair of limit shapes that it produces
both in the past and in the future.
More precisely, we can define the \emph{limit shape map}
$S:\calH\to\Omega\times\Omega$ by
\[S(x,v)=(a^-(x,v),a^+(x,v))\]
\[a^\pm(x,v)=\lim_{t\to\pm\infty} \;\abs{t}^{-1}\pi(\varphi^t(x,v))\,.\]
As a consequence of Chazy's \emph{continuity of the instability}
(Lemma \ref{lema-cont.limitshape}) we have that
the limit shape map is actually a continuous map.
It is also clear that
\[\norm{a^-(x,v)}=\norm{a^+(x,v)}\] for all $(x,v)\in\calH$.
In fact, we have
\[\norm{a^\pm(x,v)}^2=2h=\norm{v}^2-2U(x)\]
where $h>0$ is the energy constant
of the generated bi-hyperbolic motion.
Hence the image of $S$ is contained in the manifold
\[
\calS=
\set{(a,b)\in\Omega\times\Omega\mid \norm{a}=
\norm{b}}\,.
\]
Clearly, we have $S\circ\varphi^t=S$ for all $t\in\R$.
Therefore the study of the limit shape map can be restricted
to the section of perihelia $\calP$.
Counting dimensions we get
\[\dim\calP=2dN-1=\dim\calS\]
where $d=\dim E$.

We will see now that the center of mass
can be reduced to the origin.
Let us call $G:E^N\to E$ the linear map that associates
to each configuration its center of mass.
More precisely,
if $M=m_1+\dots+m_N$ is the total mass of the system,
then the center of mass $G(x)$ of $x=(r_1,\dots,r_N)\in E^ N$
is well defined by the condition
$MG(x)=m_1r_1+\dots+m_Nr_N$.
Just as we did for the quantities U and I,
we will write $G(t)$ instead of $G(x(t))$ when
the motion $x(t)$ is understood.
We observe now that if $x(t)=ta^++o(t)$ as $t\to+\infty$,
then $G(t)=tG(a^+)+o(t)$.
Moreover, since $\ddot G(t)=0$ for all $t\in\R$ we know that
the velocity of the center of mass $\dot G(t)=v_G$ is constant,
hence $G(t)=tv_G+G(0)$.
Therefore we must have $G(a^+)=v_G$.
If in addition $x(t)=-ta^-+o(t)$ as $t\to-\infty$,
then we also have $G(a^-)=-v_G$.
We conclude that
\[G(a^-(x,v))=-\;G(a^+(x,v))\]
for all $(x,v)\in\calH$.
This allows to reduce in $d$ dimensions
the codomain of the limit shape map.
On the other hand,
a constant translation of a bi-hyperbolic motion
gives a new bi-hyperbolic motion with the same limit shapes.
Thus the domain can also be reduced of $d$ dimensions
by imposing the condition $G(x(0))=0$.

Finally,
we note that bi-hyperbolic motions are preserved
by addition of uniform translations.
Let $\Delta\subset E^N$ be the diagonal subspace,
that is the set of configurations of total collision. 
For any bi-hyperbolic motion $x(t)$
with limit shapes $a^-$ and $a^+$, 
and any $v\in\Delta$,
we get a new bi-hyperbolic motion $x_v(t)=x(t)+tv$,
whose limit shapes are precisely $a^--v$ and $a^++v$.
In particular these configurations without collisions
have opposite center of mass and the same norm.
The equality of the norms can also be deduced
from the orthogonal decomposition
$E^N=\Delta\oplus \ker G$ and using the fact that
$G(a^+-a^-)=0$.

In sum,
we can perform the total reduction of the center of mass
by setting $G(x(0))=G(\dot x(0))$,
which leads to $G(a^-)=G(a^+)=0$.
We define
\[\calP_0=\set{(x,v)\in\calH\mid\;
G(x)=G(v)=0\text{ and }\inner{x}{v}=0}\]
\[\calS_0=\set{(a,b)\in\Omega\times\Omega\mid\;
G(a)=G(b)=0\text{ and }\norm{a}=\norm{b}}\]
and we maintain the balance of dimensions.

\begin{question}
Is the limit shape map $S:\calP_0\to\calS_0$
a local diffeomorphism?
\end{question}

The answer is yes in the Kepler case
(see Figure \ref{HyperKepler1}).
But in the general case,
this property must depend on the potential $U$.
For instance,
in the extremal case of $U=0$,
in which motions are thus straight lines,
we get the restriction $a^-=-a^+$ for all hyperbolic motion.
In this case the shape map loses half of the dimensions.

It is therefore natural to ask, for the general $N$-body problem,
whether or not
there is some relationship between these two functions.

\begin{question}
How big is the image of the limit shape map?
\end{question}

In the Kepler case, only the pairs $(a,b)$ such that
$\norm{a}=\norm{b}$ and $a\neq\pm\, b$
are realized as asymptotic velocities of some hyperbolic trajectory.
This can be generalized for $N\geq 3$.
If $a\in\Omega$ is a planar central configuration
and $R\in SO(E)$ keeps invariant the plane containing $a$,
the pair $(a,Ra)$ is realized as the limit shapes
of a unique homographic hyperbolic motion,
except in the cases $R=\pm\,Id$.

\begin{figure}[h]
\centering
\includegraphics[scale=1.1]{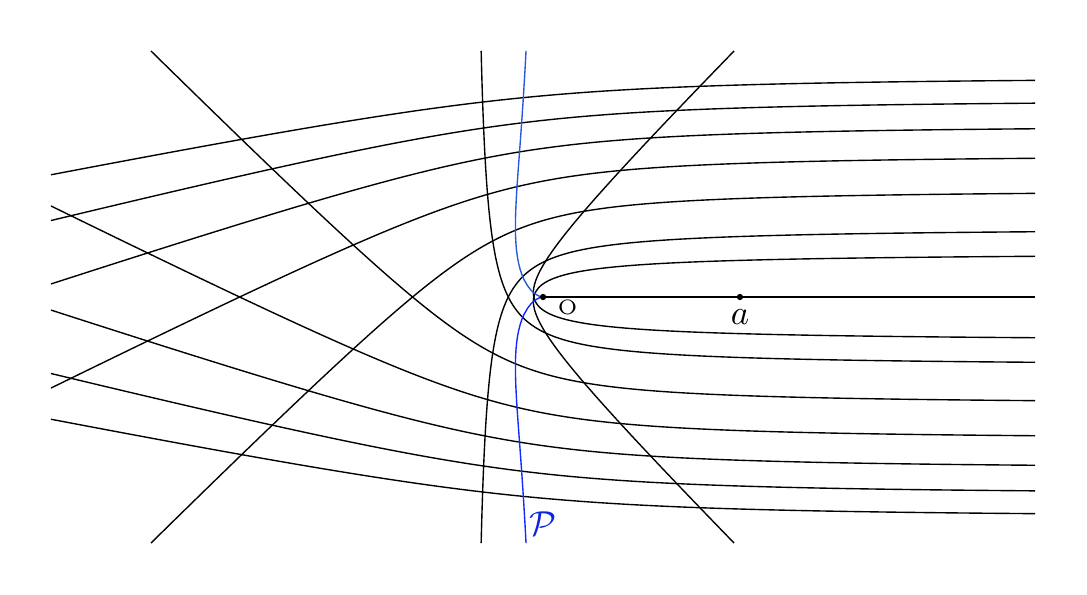}
\caption{Hyperbolic motions of the Kepler problem
with fixed value of the energy constant $h>0$
and asymptotic velocity $a$ in the future.
All but one of these motions are bi-hyperbolic.
The blue curve $\calP$ is composed of the
corresponding perihelia.} 
\label{HyperKepler1}
\end{figure}

We now devote attention to the effect of homogeneity.
Recall that
if $x(t)$ is a bi-hyperbolic motion of energy constant $h$,
then  for every $\lambda>0$ the solution given by
$x_\lambda(t)=\lambda\,x(\lambda^{-3/2}t)$
is still bi-hyperbolic with energy constant $\lambda^{-1}h$.
Moreover,
if we note $x_0=x(0)$ and $v_0=\dot x(0)$ then we have
\[
(x_\lambda(t),\dot x_\lambda(t))=
\varphi^t(\lambda\, x_0,\lambda^{-1/2}\,v_0)
\]
for all $t\in\R$.
These considerations prove the following remark.

\begin{remark}
For any $(x,v)\in\calH$ and for any $\lambda>0$, we have
\[S(\lambda\, x,\lambda^{-1/2}\,v)=\lambda^{-1/2}\,S(x,v)\,.\]
\end{remark}

Let us introduce the following question with an example.
Consider the planar three-body problem with equal masses.
That is, $E=\R^2\simeq\C$, $N=3$ and $m_i=1$ for $i=1,2,3$.
For $h>0$, define the equilateral and collinear configurations
\[a_h=\sqrt\frac{2h}{3}\,(1,z,z^2)\qquad b_h=\sqrt h\,(-1,0,1)\]
where $z$ is a primitive root of $z^3-1$.
Thus we have $\norm{a_h}=\norm{b_h}=\sqrt{2h}$ and also
$G(a_h)=G(b_h)=0$ for all $h>0$.

\begin{question}
Is the pair $(a_h,b_h)$ in the image of the limit shape map?
\end{question}

In other words,
is there a bi-hyperbolic motion whose dynamics originates
in the past with a contraction from a big equilateral triangle,
and then, after a period of strong interaction between the particles,
the evolution ends with an almost collinear expansion?

In our view, the method of viscosity solutions
could be useful to answer this question. 
In particular, we consider it necessary to push forward
the understanding of the regions of differentiability
of these weak solutions.
It seems reasonable that an orbit like this
can be found by looking for critical points
of a sum of two Busemann functions (see Sect. \ref{s-busemann}).

\begin{question}
If the answer to Question 3 is yes,
what is the infimum of the norm of the perihelia
of the bi-hyperbolic motions having these limit shapes? 
\end{question}

Observe that once we have a bi-hyperbolic motion which is
equilateral in the past and collinear in the future,
we can play with the homogeneity in order to obtain a new one,
but having a perihelion contained in an arbitrarily small ball.
That is to say, it would be possible to make, at some point,
all bodies pass as close as we want from a total collision.
Of course, to do this
we must increase the value of the energy constant indefinitely.
Thus we preserve the limit shapes in the weak sense,
but not the size of the asymptotic velocities.
 In the family of motions $(x_\lambda)$ described above,
the product of the energy constant $h$
and the norm of the perihelion is constant.
In the Kepler case,
once we fix the value of $h>0$
there is only one bi-hyperbolic motion
connecting a given pair $(a,b)$ (see \cite{Alb}).
Therefore we can see the norm of the perihelion
as a function of the limit shapes.
We can see that the norm of the perihelion
tends to $0$ for $a\to b$,
and tends to $+\infty$ for $a\to -b$.


\subsection*{Acknowledgements}
The authors would like to express
a very special thank to Albert Fathi,
who suggested the use of the method
of viscosity solutions, as well as the way
to construct the maximal calibrating curves
of the horofunctions. 
The first author is also grateful to
Alain Albouy, Alain Chenciner and Andreas Knauf
for several helpful conversations at the IMCCE.
Finally, we would like to thank the referee
for his accurate and helpful comments,
and Juan Manuel Burgos
for pointing out a subtle mistake in one of the proofs.


\end{document}